\newtheorem{defi}{Definition}[section]
\newtheorem{lemma}{Lemma}[section]
\newtheorem{thm}{Theorem}[section]
\newtheorem{proposition}{Proposition}[section]
\newtheorem{assumption}{Assumption}[section] 
\newtheorem{remark}{Remark}[section] 
\newcommand{\M}{{\cal M}}
\newcommand{\I}{{\cal I}} 
\def\trans{^{\rm T}}
\let\sss= \scriptscriptstyle
\numberwithin{equation}{section}
\newcommand{\Yao}[1]{{\textcolor{black}{#1}}}
\newcommand{\Xia}[1]{{\textcolor{black}{#1}}} 
\newcommand{\ZF}[1]{{\textcolor{black}{#1}}} 
\newcommand{\RVSD}[1]{{\textcolor{black}{#1}}}
\begin{document}

\def\spacingset#1{\renewcommand{\baselinestretch}%
{#1}\small\normalsize} \spacingset{1}


\title{\bf Random Fixed Boundary Flows}

\maketitle
 
\author{\noindent Zhigang Yao\\
    Department of Statistics and Data Science \\ 21 Lower Kent Ridge Road\\
National University of Singapore, Singapore 117546 \\\\ 
Center of Mathematical Sciences and Applications  \\ 20 Garden Street\\
Harvard University\\
Cambridge, USA   02138\\
\noindent  email: \texttt{zhigang.yao@nus.edu.sg or zhigang.yao@cmsa.fas.harvard.edu}\\

\noindent     Yuqing Xia\\
School of Data Science  \\ 
Zhejiang University of Finance and Economics\\
Hangzhou, China 310018  \\
\noindent email: \texttt{xiayq0121@zufe.edu.cn}\\

 \noindent    Zengyan Fan\\
School of Science and Technology\\ 463 Clementi Road\\ 
Singapore University of Social Sciences, Singapore 599494  \\ 
\noindent  email: \texttt{zyfan@suss.edu.sg}
}
 
\vspace{0.1 in}

\begin{abstract}
\RVSD{We consider fixed boundary flow with canonical interpretability as principal components extended on non-linear Riemannian manifolds. We aim to find a  flow with fixed starting and ending points for noisy multivariate data sets lying on an embedded non-linear Riemannian manifold. 
In geometric term, the  fixed boundary flow is defined as an optimal curve that moves in the data cloud with two fixed end points. At any point on the flow, we maximize the
inner product of the vector field, which is calculated locally, and the tangent vector of the flow. The rigorous definition derives from an optimization problem using the intrinsic metric on the manifolds. For random data sets, we name the fixed boundary flow the random fixed boundary flow  and analyze its limiting behavior under noisy observed samples. We construct a high level algorithm to compute the random fixed boundary flow and the convergence of the algorithm is provided. We show that the fixed boundary flow yields a concatenate of three segments, of which one coincides with the usual principal flow when the manifold is reduced to the Euclidean space. {\color{black} We further prove that the random fixed boundary flow converges largely to the population fixed boundary flow with high probability.} We illustrate how the random fixed boundary flow can be used and interpreted, and showcase its application in real data sets.}
\end{abstract}

\noindent%
{\it Keywords:}  vector field, manifolds, curve, boundary condition, tangent space


\section{Introduction}
\addtolength{\textheight}{.5in}%
Most existing statistical methods assume a linear dependency between features.
As the dimensionality of features increases, the representation of the features in a high-dimensional space becomes more complex and it thus becomes more challenging to understand the relationships between features. In many applications, modern data structures are often complex and not necessarily linear. Indeed it is often the case that there is a lower-dimensional structure, namely a manifold embedded in the high-dimensional {\color{black}ambient space \citep{fefferman2016testing,pmlr-v75-fefferman18a},} as in the examples of geometric shapes in the shape space \citep{turk1994zippered,dryden2016statistical,kilian2007geometric,Bradley2013} and graphs in computer graphics \citep{phillips1997feret,Gross2005,arjovsky2017wasserstein}.

A series of methods that aim to recover the underlying structure of the lower-dimensional manifold have been developed over the past two decades. These methods, usually called manifold learning, are focused mostly on mapping data in a $d$-dimensional space into a set of points close to an $m$-dimensional ($m \ll d$) manifold. Among them, is a method known as known as the Principal Component Analysis (PCA), which is commonly used to reduce the feature dimension in the Euclidean space. To address features lying in a non-linear space (i.e., a manifold), methods such as LLE \citep{roweis2000nonlinear}, Isomap \citep{tenenbaum2000global}, MDS \citep{cox2000multidimensional}, and LTSA \citep{zhang2004principal}, which determine the low-dimensional embedding, preserving local properties of the data, may be preferable. A comprehensive review of such work appears in \citet{ma2011manifold}.

Another line of research relating to statistics on manifolds is centered on the extension of existing methods defined in the Euclidean space to the manifold space. The manifold space can be the actual physical space that the data lies on or the learnt manifold created through the manifold learning methods. In recent decades, numerous non-linear approaches have been developed to analyze the data on the manifold directly \citep{Jupp1987,fletcher2004principal,Huckemann2006,Kumi2007,Fletcher2007,Kenobi2010,Jung2012,Eltzner2017}. Throughout the paper, we focus on the known manifold, based on the assumption that the manifold embedding is known.

Next, we will mainly review the ``curve fitting'' methods on manifolds. 
A geodesic is a generalization of the straight lines in the standard Euclidean space to the manifold. The principal geodesic analysis \citep{fletcher2004principal}, which extends the PCA to the manifold, was proposed to describe the non-linear variability of data on a manifold. The principal curves, proposed in \citet{hastie1989principal}, are flexible one-dimensional curves that pass through the middle of data points. Having said that, principal curves are able to better capture the non-linear variation of data in comparison to all other regression lines in the Euclidean space.
\citet{ozertem2011locally} redefined principal curves and surfaces in terms of the gradient and the Hessian of the probability density estimate, based on the consideration that every point on the principal surface should be at the local maximum of the probability density in the local orthogonal subspace, and not the expected value as in \citet{hastie1989principal}. For applications in classification tasks, \citet{ladicky2011locally} proposed a new curve fitting method to find the smooth decision boundary with bounded curvature.

A recent piece of work on principal flows \citep{panaretos2014principal} works as an extension of the principal curves on Riemannian manifolds. Therefore, the principal flows are also flexible one-dimensional curves, which pass through the Fr\'echet mean of the data points. The principal flows are able to capture the non-geodesic pattern of variation both locally and globally.
Instead of handling curves with an explicit parameterization,
\citet{liu2017level} combine the level set method with the principal flow algorithm to obtain a fully implicit formulation, so that the obtained co-dimension one surface on the manifold fits the data set well.


\RVSD{When the data comes with multiple paths, it would be quite natural to want to isolate one of the paths in particular - that with a fixed direction. All the methods outlined above/earlier fail to determine flows with fixed directions. Hence, we propose flows with a fixed direction, each determined by fixed data boundaries, namely their start and end points. 
For example, we consider seismological events that took place in the Sea of Japan between 1904 and 2015, with the epicentres plotted as green dots in Figure \ref{Fig:plate}(b). 
From the information of tectonic plates shown in Figure \ref{Fig:plate}(a), we observe that the seismological events in this analysis tended to occur around the tectonic plate boundaries (shown as black curves with triangles in Figure \ref{Fig:plate}(a)). Specifically, we deduce that the seismological events occurred frequently along the boundaries of four tectonic plates: the North American plate, the Eurasian plate, the Philippine Sea plate and the Pacific plate.
Given these seismological events, the principal flow passes through the Fr\'echet mean and captures local variations that depend on the value for the scale parameter, $h$. Since there are a greater number of seismological events along the boundary of the Pacific plate, the resulting Fr\'echet mean appears around the Pacific plate and the principal flow starts moving from the Fr\'echet mean. In Figure \ref{Fig:plate}(b), the red curve represents the principal flow of the earthquake data for a scale parameter of $400$ miles. 
We observe that the principal flow moves along the boundary of the Pacific plate (red curve in Figure \ref{Fig:plate}(c)). 
When we focus on the seismological events caused by the Philippine Sea plate, the principal flow will not be of interest in terms of finding a boundary. In this sense, the trend along the boundary highlighted in blue in Figure \ref{Fig:plate}(c) would be more appropriate.  Although we could derive a flow similar to that shown in blue by selecting the data with latitudes and longitudes around the Philippine Sea plate, it is hard to accurately determine which data points to include in practice. Hence, we propose fixed boundary flows, where the flow will be automatically determined by using boundary points that are chosen by users manually. If we select start and end points around the Philippine Sea plate, the obtained fixed boundary flow for a scale parameter of $400$ miles is shown in blue in Figure \ref{Fig:plate}(b). Furthermore, we observe that the fixed boundary flow starts from the fixed starting point, moves along the boundary of the Philippine Sea plate (blue curve in Figure \ref{Fig:plate}(c)) and terminates at the fixed ending point.}

\begin{figure}[h]
\begin{center}
\subfigure[]{
\includegraphics[width=1.5in]{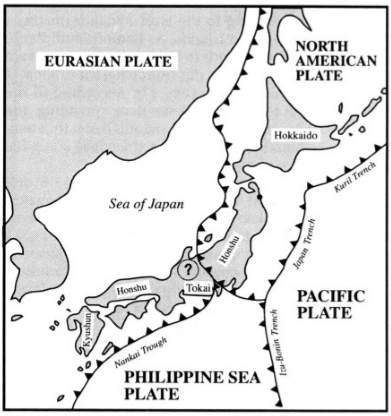}}
\subfigure[]{
\includegraphics[width=1.45in]{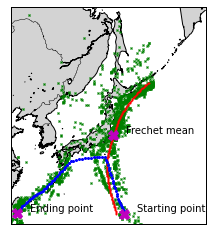}}
\subfigure[]{
\includegraphics[width=1.5in]{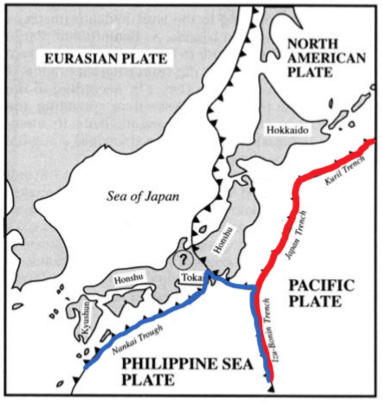}}
\end{center}
\caption{Seismology events in the Sea of Japan. (a) Map of tectonic plates around the Sea of Japan. Black curves with triangles: plate boundaries; (b) Earthquake epicentre data of the Sea of Japan (green dots). Red curve: principal flow for a scale parameter of $400$ miles; blue curve: fixed boundary flow for a scale parameter of $400$ miles; (c) Map of tectonic plates with highlighted boundaries that correspond to principal flows (in red) and fixed boundary flow (in blue).}\label{Fig:plate}
\end{figure}

In order to clarify the aforementioned concepts and parameters,
we hereby review the technique proposed for principal flows in brief and demonstrate that this technique comes up short when considering boundary constraints. 
Throughout this paper we work within the context of a complete Riemannian manifold $\M$ of dimension $m$, and $\M$ is isometrically embedded into the Euclidean space $(\mathbb{R}^d,\|\cdot\|)$ with $m<d$. The related preliminaries in Riemannian geometry can be found in the Supplementary Materials. \Xia{Given data points $\{x_i\}_{i=1}^n$ on the Riemannian manifold,}
the methodology for the principal flow seeks to solve for a curve on the manifold that passes through the Fr\'echet mean 
of the data, such that the tangent vector along the curve locally follows the direction of maximal variation of the data in a local tangent space. \Xia{As we will define later, the vector field characterizes the direction of maximal variation and the scale parameter characterizes how locally or globally we wish to describe a path of maximal variation}. A flow with a large scale parameter captures the global trend while a flow with a reduced scale parameter describes the finer structure.

Mathematically, the principal flow finds a curve $\gamma : [0,r] \to \mathcal{M}$ starting at a Fr\'echet mean $\bar{x}$ and maximizing
\begin{align}\label{obj:prin_flow}
\int_{0}^r \lambda_1(\gamma(t)) \langle \dot{\gamma}(t), W_{n,h}(\gamma(t)) \rangle dt.
\end{align}
where $\lambda_1(x)$ and $W_{n,h}(x)$ are the first eigenvalue and the first unit eigenvector of the local tangent covariance matrix $\Sigma_{n,h}(x)$, respectively. The definition of $\Sigma_{n,h}(x)$ is reviewed in the Supplementary Materials, and we remark that $\Sigma_{n,h}(x)$ is computed with the projections of data points onto $T_x\M$, which implies that the first eigenvector $W_{n,h}(x)$ also locates in the tangent space at $T_x\M$. The subscript $n$ of $\Sigma_{n,h}$ indicates that $\Sigma_{n,h}(x)$ is calculated from the data points of cardinality $n$, while the subscript $h$ of $\Sigma_{n,h}(x)$ indicates the locality. Specifically, $\Sigma_{n,h}(x)$ is computed using the data points in $B_d(x,h)$, the Euclidean ball centered at $x$ of radius $h$.  With a different $x$, the eigenvectors form the vector field $W = \{W_{n,h}(x)\}$. \Xia{To avoid confusion, we will omit the subscript $n$ and $h$ of $W_{n,h}(x)$ hereafter. The first eigenvalue is assumed to be simple throughout, which guarantees the uniqueness of $W$.} We note that the projection onto the local tangent space might be impossible in practice, in the case that either $\M$ or the formula of the local tangent space is unavailable. Under these circumstances, we might omit the projection step in computing $\Sigma_{n,h}(x)$ and use the local covariance matrix in ambient space instead.



{\color{black} 
Let us think of a simple example: noisy ``C''-shaped data in $\M = \mathbb{R}^2$ as shown in Figure \ref{Fig:lambda1}. Furthermore, by setting $h = \infty$, we will use this example to demonstrate that determining fixed boundary flows is not a simple extension of the work of principal flows. The first eigenvalue $\lambda_1(x)$ in \eqref{obj:prin_flow} varies with $x$ with Figure \ref{Fig:lambda1} visualizing its changes. One may see that the first eigenvalue reaches its trough at the Fr\'echet mean, which in turn implies that the first eigenvalue would have been increasing along any direction after its departure from $\bar{x}$. By differentiating $\lambda_1(x)$ (see derivation given in the Supplementary Materials) we have
\begin{align*}
    d \lambda_1(x) = 2\langle W(x)W(x)^T(x-\bar{x}), dx \rangle,
\end{align*}
and that $\lambda_1(x)$ increases most rapidly along its gradient, that is, $W(x)$ and $-W(x)$. Therefore, maximizing either $\lambda_1(\gamma(t))$ or $\langle  \dot{\gamma}(t), W(\gamma(t)) \rangle$ at $\gamma(t)=\bar{x}$ 
locally leads to two half-lines along $W(\bar{x})$ and $-W(\bar{x})$ starting from $\bar{x}$. Hence, maximizing the optimization problem \eqref{obj:prin_flow}, \Xia{which is the product of $\lambda_1(\gamma(t))$ and $\langle  \dot{\gamma}(t), W(\gamma(t)) \rangle$} locally, leads to the principal flow along $W(\bar{x})$ through $\bar{x}$, as represented by the dashed line on the left panel.

\begin{figure}[h]
    \centering
    \includegraphics[width=0.6\textwidth]{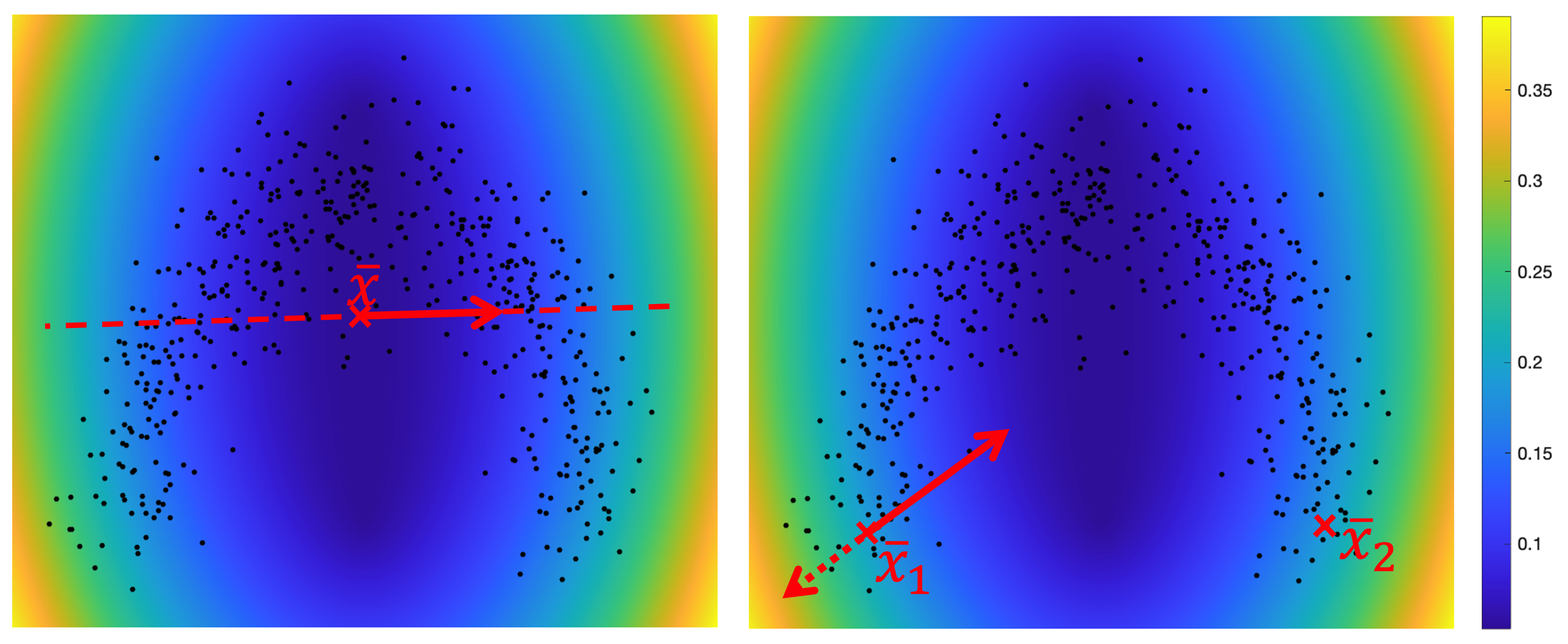}
    \caption{Distribution of the first eigenvalue $\lambda_1(x)$ for $x \in \M$ with $h=\infty$. 
    The black points represent sample points; the red arrow represents the direction of $W(x)$; the red cross on the left panel represents the Fr\'echet mean $\bar{x}$ of the sample and the dashed line on the left panel represents the principal flow; two red crosses on the right panel represent the fixed boundary $\bar{x}_1$ and $\bar{x}_2$; the dashed arrow on the right panel represents the opposite direction of $W(x)$; the color bar represents the magnitude of $\lambda_1(x)$.
}
\label{Fig:lambda1}
\end{figure}

Things are very different when one considers the fixed boundary flows, which begin from the fixed starting point $\bar{x}_1$, move along the data points and end at the fixed ending point $\bar{x}_2$. From the right panel of Figure \ref{Fig:lambda1}, we observe that the \Xia{$\lambda_1(x)$} is large at the boundary and will decrease when a curve moves towards the data cloud's center from $\bar{x}_1$. Furthermore, from the differentiation form, $\lambda_1(x)$ decreases most rapidly along its gradient $W(\bar{x}_1)$, as shown by the red arrow  in the right panel of Figure \ref{Fig:lambda1} and increases most rapidly along $-W(\bar{x}_1)$, the dashed arrow shown on the right panel of Figure 2. Therefore, if one maximizes the inner product $\langle  \dot{\gamma}(t), W(\gamma(t)) \rangle$ \Xia{at $\gamma(t) = \bar{x}_1$} in \eqref{obj:prin_flow}, in favor of the curve moving along the vector field $W(\gamma(t))$, \Xia{one should take $\dot{\gamma}(t) = W(\gamma(t))$. This means the curve would move along the red arrow in the right panel of Figure \ref{Fig:lambda1}, which makes $\lambda_1(\gamma(t))$ decrease most rapidly.} While if one maximizes $\lambda_1(\gamma(t))$, \Xia{one should take $\dot{\gamma}(t) = -W(\gamma(t))$, in favor of the curve moving along $-W(\gamma(t))$ since it is the gradient of $\lambda_1(\gamma(t))$. However, taking $\dot{\gamma}(t) = -W(\gamma(t))$ makes $\langle  \dot{\gamma}(t), W(\gamma(t)) \rangle$ decrease fastest.} From this point of view, we conclude that maximizing $\lambda_1(\gamma(t))$ and the inner product $\langle  \dot{\gamma}(t), W(\gamma(t)) \rangle$ in \eqref{obj:prin_flow} is mutually conflicting. Such conflict makes the fixed boundary flows unique, unlike the principal flows, meaning one cannot, therefore, simply extend the optimization problem of principal flows to fixed boundary flows.

We are now motivated to consider the fixed boundary flows that capture the manifold data variation in a way that differs from the principal flows. To achieve this, we initialize an optimization problem to capture a smooth \ZF{flow} for non-random data lying on the manifold that starts and ends at pre-defined points in Section \ref{SEC:FBF}. For each point of the flow, its tangent vector is close to the vector field at that point. When noise presents, the data follows from the underlying distribution of the population flow on the manifold and it is thus non-deterministic. And so too are the fixed boundary flows. The random fixed boundary flows, generalizing the fixed boundary flows, are proposed in Section \ref{SEC:RFBF_ALGORITHM}. An efficient algorithm to determine the random fixed boundary flow, with its convergence of the random fixed boundary flow, is outlined in Section \ref{SEC:CONV_RFBF_ALGORITHM}. In Sections \ref{SEC:simulation} and \ref{SEC:realdata}, we illustrate that the random fixed boundary flow is able to capture patterns of variation in \Xia{synthetic, seismic and real-world image data.} Several statistical properties and theories of the fixed boundary flow are examined in Section \ref{SEC:FBF_EUCLIDEAN}.


\section{Fixed boundary flows}\label{SEC:FBF} 
Fixing two boundary points produces an infinite number of flows. To begin with, we describe the class of curves that provide the candidates of the fixed boundary flows. Given $\bar{x}_1$ and $\bar{x}_2$, we define the class as:
\begin{eqnarray}
\Gamma(\bar{x}_1,\bar{x}_2)&=\{&\gamma: [0,r]\to \mathcal{M}: \gamma\in C^2([0,r]), r\in(0,\Xia{C\Delta}], \nonumber\\
&& \gamma(0)=\bar{x}_1, \gamma(r)=\bar{x}_2, \gamma(s)\neq \gamma(s^\prime)~\text{for}~s\neq s^\prime, \nonumber \\
&& \ell(\gamma[0,t])=t, \text{for~all}~ t\in[0,r]\},\label{curve_set_original}
\end{eqnarray}
where $W(\gamma(t))$ is the value of the vector field $W$, calculated form local data $\{x_i\}_{i=1}^n$ at $\gamma(t)$, and $\ell(\gamma[0,t])$ denotes the length of the parametric flow \Xia{$\gamma[0,t]$} 
from $\gamma(0)$ to $\gamma(t)$, for all \Xia{$0 < t \leq r$}. \Xia{Here, $\Delta = d(\bar{x}_1, \bar{x}_2)$ denotes the geodesic distance between $\bar{x}_1$ and $\bar{x}_2$ and $C>1$ is a given constant. The choice of $C$ controls the size of $\Gamma(\bar{x}_1, \bar{x}_2)$.  Since $t\in [0,\Xia{C\Delta}]$, the length of the flows in the class $\Gamma(\bar{x}_1,\bar{x}_2)$ is less than $C\Delta$. A smaller C filters out the flows that (1) are far away from the data cloud by restricting the length of flows in the class $\Gamma(\bar{x}_1, \bar{x}_2)$, and (2) overfits the data (this is because overfitted flows tend to go through all data points which will increase its length). We assume $\Delta < 1$ without loss of generalization, otherwise the manifold $\M$ should be rescaled.} For any flow $\gamma \in \Gamma(\bar{x}_1,\bar{x}_2)$, we could determine its moving direction and vector field at every point. The moving directions and vector fields vary with different points and different flows. 
To \Xia{follow the direction of highest variation, }
\Yao{we aim to find a flow with a moving direction that matches the vector field as much as possible at any given point on the flow. 
From  the classical mechanics perspective, we seek a flow with fixed starting and ending points, that best approximate the vector field globally. 
Conventional local Euclidean approaches fail to achieve this without being able to accommodate the boundary conditions globally, while forcing the flow to stay on the manifold. We term such an optimal flow the fixed boundary flow; that is, it is defined as a smooth flow $\gamma$ on the manifold $\mathcal{M}$, starting and ending at the fixed points, with a derivative vector $\dot{\gamma}$ that is maximally compatible with the vector field $W$, calculated from local data.}

\begin{defi}(Fixed boundary flow at scale $h$)\label{Definition:FBF}
Let $\bar{x}_1, \bar{x}_2\in B$, where $B$ is the neighborhood that contains the data $\{x_i\}_{i=1}^n$ on the manifold. Assume that $\Sigma_{n,h}(x)$ have distinct first and second eigenvalues for any $x\in B$. A fixed boundary flow of $\{x_i\}_{i=1}^n$ with given $\bar{x}_1$ and $\bar{x}_2$ is the curve satisfying
\begin{equation}
\gamma = \arg\sup _{\gamma\in\Gamma(\bar{x}_1,\bar{x}_2)}\int_0^{\ell(\gamma)}\langle \dot{\gamma},W(\gamma(t))\rangle dt, \label{opt_original}
\end{equation}
where $W(\gamma(t))$ is the vector field over the neighborhood of $\gamma(t)$ for $0\le t\le C\Delta$.
\end{defi}
\noindent
The fixed boundary flow is the solution of the optimization problem defined in \eqref{opt_original}.

\section{Random fixed boundary flows}\label{SEC:RFBF}
Besides being high-dimensional, the data on the manifold is usually noisy, representing some underlying distribution. One accessible way to illustrate the noisy data is shown in the following assumption.


\begin{defi}(Population flow)
\Xia{Given boundary points $\bar{x}_1$ and $\bar{x}_2$, there exists a population flow $\gamma^{\ast} \subset \M$ under unit speed parameterization, depending on the continuous vector field $W^{\ast}$ distribution. Assume that $\gamma^{\ast}$ passes through $\bar{x}_1$ and $\bar{x}_2$, which means $\gamma^{\ast}(0) \neq \bar{x}_1$ and $\gamma^{\ast}(r^{\ast}) \neq \bar{x}_2$ with $r^{\ast} = \ell(\gamma^{\ast})$. }
\end{defi}


\begin{assumption} \label{assumption:noise}
The data points $\{x_i\}_{i=1}^n \subset \mathcal{M}$ satisfy
\begin{equation}
\Xia{x_i = \gamma^*(t_i) + \xi_i, \quad {\rm for} \  i = 1, \ldots, n}
\label{data_generation}
\end{equation}
where $t_1 \leq \cdots \leq t_n$ are ordered indices sampled from uniform distribution between $[0, r^*]$ on $\gamma^*$  and $\{\xi_i\}_{i=1}^n \sim N(0, \sigma^2I_d)$ are i.i.d. Gaussian noises.
\end{assumption}


\noindent

\begin{remark}
\Xia{Also, $\gamma^*(t_1) \neq \bar{x}_1$ and $\gamma^*(t_n) \neq \bar{x}_2$. {\color{black}As shown in Figure \ref{Fig:gamma_star}}, $\bar{x}_1$ and $\bar{x}_2$ are chosen to be at the
inner end of the data cloud so that there are enough samples in the neighborhood of $\bar{x}_1$ and $\bar{x}_2$. Section \ref{SEC:CONV_RFBF_ALGORITHM} will further formulate the relationship between $\bar{x}_1$($\bar{x}_2$) and the end points of the population flow.} 
\end{remark}


Under Assumption \ref{assumption:noise}, the relation between the fixed boundary flow and $\gamma^{\ast}$ is summarized in the following theorem. 


\begin{thm}\label{THM:RANDFBF}
Suppose Assumption \ref{assumption:noise} holds,
the vector field $W$ is calculated at scale $h$ and $T = \{t: \|\gamma^\ast(t)-\gamma^\ast(0)\| > h/2 \ \mbox{and} \ \|\gamma^\ast(t)-\gamma^\ast(r^*)\| > h/2\}$. For any $t \in T$ and given $\delta > 0$, there exist constants $C$ and $n_0$ such that if $n \geq n_0$,  then $\langle  \dot{\gamma}^\ast(t), W(\gamma^\ast(t) \rangle \geq 1 - \frac{C}{2}h^2$ with probability $1-\delta$.
\end{thm}

\noindent The proof of Theorem \ref{THM:RANDFBF} is given in Appendix B in the Supplementary Materials.
From Theorem \ref{THM:RANDFBF}, we observe that 
the inner product $\langle  \dot{\gamma}^\ast(t), W(\gamma^*(t)\rangle$ is close to its maximum, that is, $1$ with sufficiently small $h$. This means that the integrand in the optimization problem (\ref{opt_original}) achieves a very large value along \Xia{the main segment of the flow $\gamma^*$. Note that we choose to work with $\gamma$ simply because there might not be enough samples at the two ends of $\gamma^*$.  Here, boundary $\bar{x}_1$ and $\bar{x}_2$ are at the  inner end of the data cloud so that the main segment $\gamma^*(\bar{x}_1, \bar{x}_2)$ is $h/2$ away from $\gamma^*(0)$ and $\gamma^*(r^*)$. Hence, $\gamma^*(\bar{x}_1, \bar{x}_2)$ well approximates the optimal solution to (\ref{opt_original}), {\color{black} as illustrated by Figure \ref{Fig:gamma_star}.}  The theoretical analysis will focus on the main segment of  $\gamma^*$. 
For convenience,  we use $\gamma^\ast$ simplifying  $\gamma^{\ast}(\bar{x}_1,\bar{x}_2)$ for the rest of the paper.}


\begin{figure}[h]
\begin{center}
\includegraphics[width=2.5in]{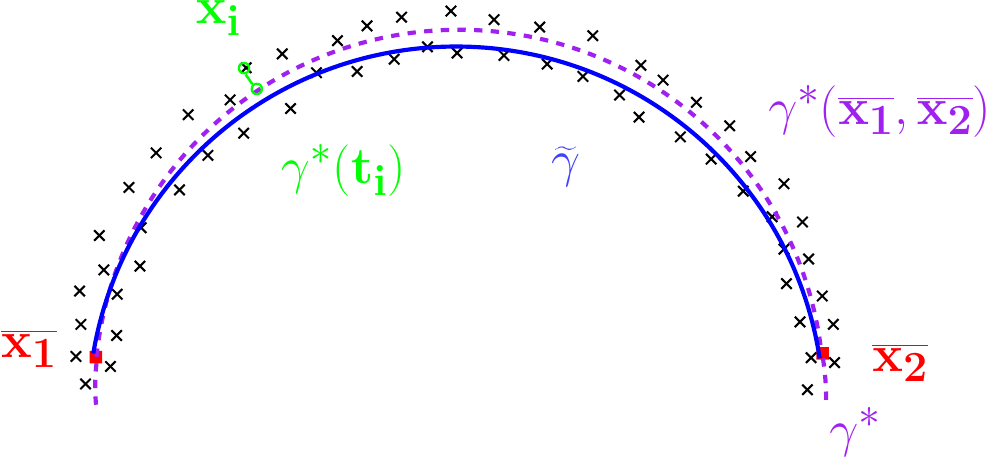}
\end{center}
\caption{Illustration of the population flow $\gamma^{\ast}$ and random fixed boundary flow $\tilde{\gamma}$. The main section of $\gamma^{\ast}$ is plotted along the purple dashed line and $\tilde{\gamma}$ is shown on the blue curve. A selected data point $x_i$ and its corresponding segment $\gamma^{\ast}(t_i)$ are highlighted in green.}\label{Fig:gamma_star}
\end{figure}

Now, let us turn to the random fixed boundary flows. Under  Assumption \ref{assumption:noise} and given fixed boundaries, a random fixed boundary flow is the empirical flow, $\tilde{\gamma}$, computed from the data points with the fixed boundary.
Our focus here is twofold. First is to determine the random fixed boundary flows through an efficient algorithm without intensive computation. Second is to investigate the distance property between the random fixed boundary flow and the population flow $\gamma^\ast$, where a theoretical analysis of the bound of the Hausdorff distance is derived from the geometry property of the underlying manifold. 

\subsection{Determination of random fixed boundary flows}\label{SEC:RFBF_ALGORITHM}



The aim of the proposed approach is to determine the random fixed boundary flow via a discrete flow with the fixed boundary. Furthermore, each point of the discrete flow moves along the direction of the vector field, which captures the localized variation maximally. From this perspective, the proposed approach attains an approximate solution for the original optimization problem in \eqref{opt_original}.

Given the fixed boundary points $\bar{x}_1$ and $\bar{x}_2$, 
the implementation begins with a discrete flow $\tilde{\gamma}^{\sss (0)}$ starting at $\bar{x}_1$ and ending at $\bar{x}_2$, with a user-defined resolution $N$. The choice of the initial flow $\tilde{\gamma}^{\sss (0)}$ can be a geodesic on the manifold $\mathcal{M}$ or a straight line from $\bar{x}_1$ to $\bar{x}_2$ in the ambient space, neither derailing the convergence of the algorithm, as we will show. The initial flow is denoted by $\tilde{\gamma}^{\sss (0)}(t_i)$, with $0 = t_0 < t_1 < ... < t_{2N} = 1$ and satisfied $\tilde{\gamma}^{\sss (0)}(t_0)=\bar{x}_1$ and $\tilde{\gamma}^{\sss (0)}(t_{2N})=\bar{x}_2$. 
Then, the proposed approach will iteratively update the flow $\tilde{\gamma}^{\sss (k)}(t_i)$ from $k=1$ until the convergence criterion is met. Gradually, at each point, the flow $\tilde{\gamma}^{\sss (k)}(t_i)$ is determined to maximize the localized variation of the data. Hence, user-defined values for the scale parameter $h$, shrinkage constant $\rho$ and stopping criterion constant $\epsilon$, are each needed during iterations. 

\RVSD{
During the iterations, we update the discrete flow $\tilde{\gamma}^{\sss (k)}(t_{i})$, $i=0,1,\ldots,2N$, for $k=1,2,\ldots$ by maximizing the optimization problem \eqref{opt_original}. There are four core steps with this aim in mind: choosing scale parameter, calculating local covariance matrix, determining vector field, and updating. Here, we elaborate on each of these core steps, as shown in Figure \ref{Fig:vector_field_update}. }

\begin{figure}[h]
\begin{center}
\subfigure[]{
\includegraphics[width=1.8in]{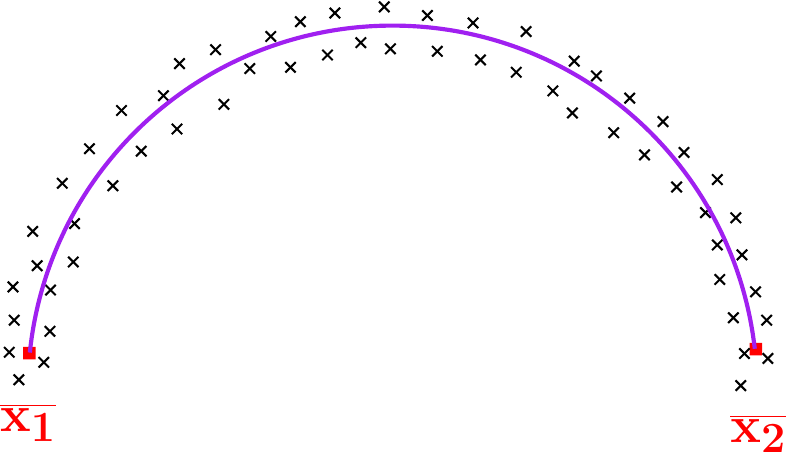}}
\subfigure[]{
\includegraphics[width=1.8in]{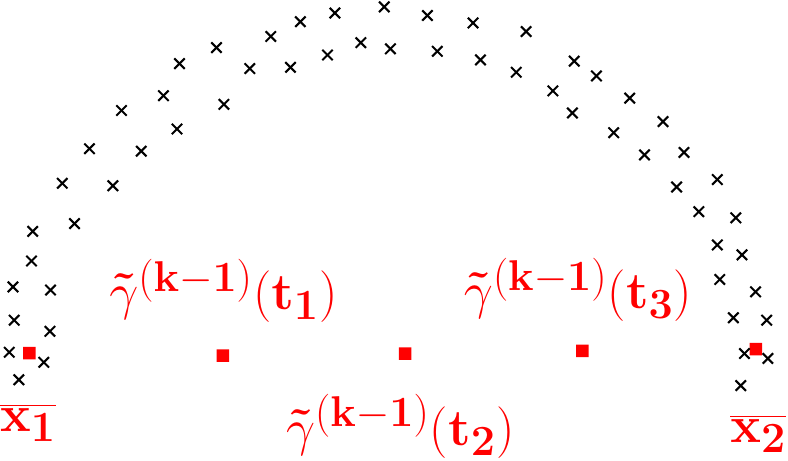}}
\subfigure[]{
\includegraphics[width=1.8in]{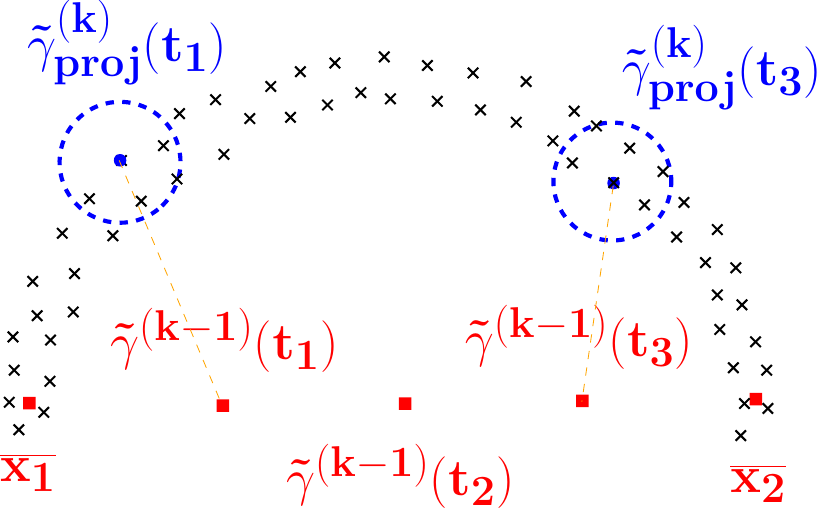}}
\subfigure[]{
\includegraphics[width=1.8in]{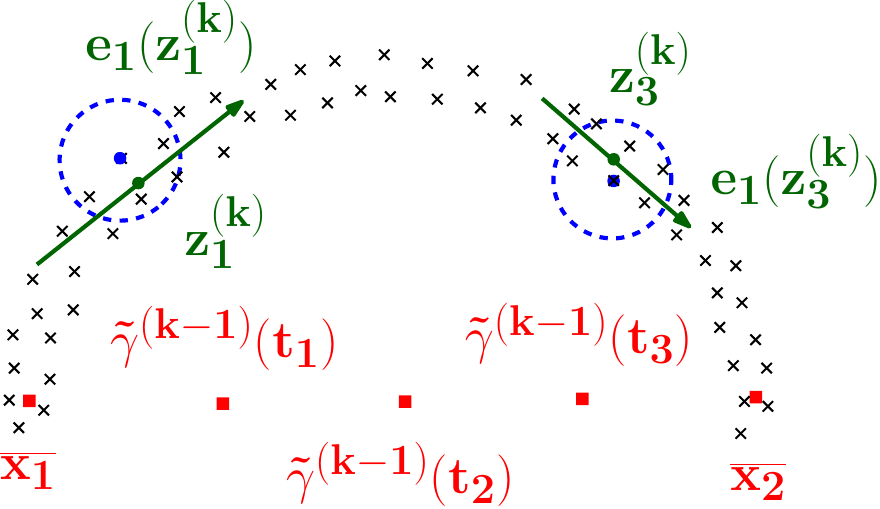}}
\subfigure[]{
\includegraphics[width=1.8in]{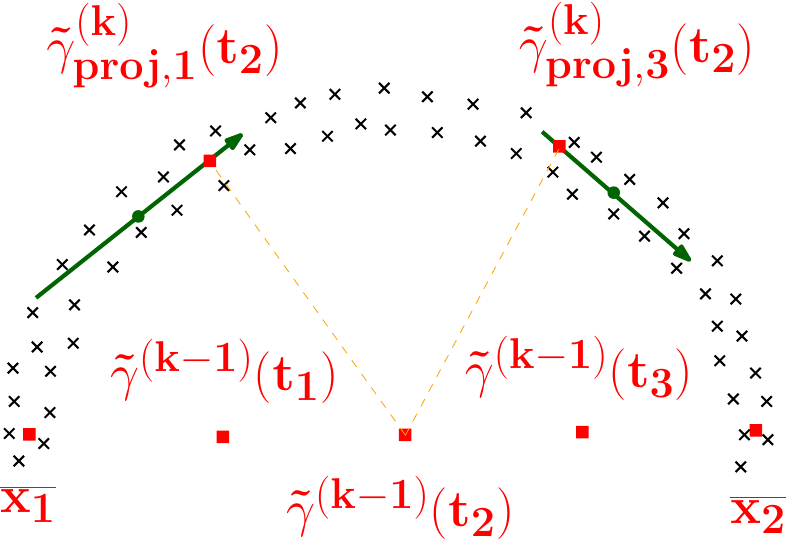}}
\subfigure[]{
\includegraphics[width=1.8in]{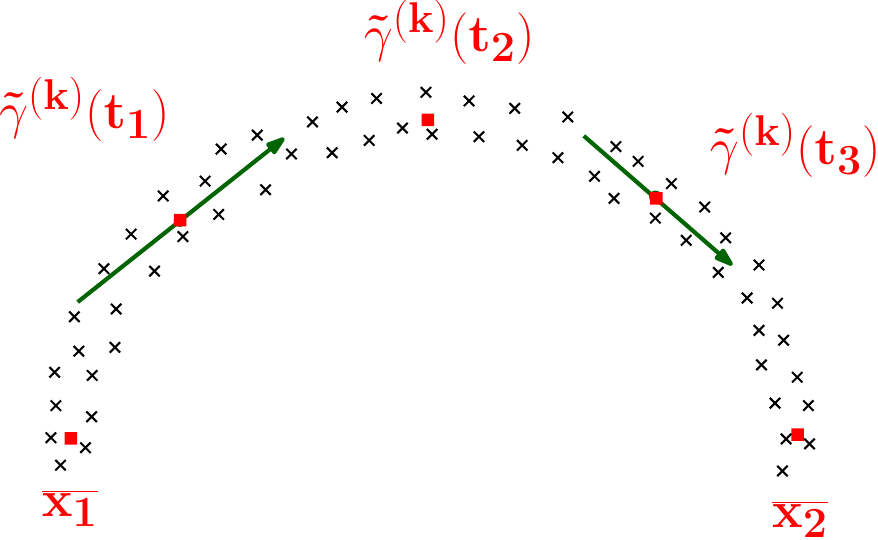}}
\end{center}
\caption{Determination of random fixed boundary flows. (a) Random data (black cross) with boundary points in red and the population flow in purple; (b) Flow $\tilde{\gamma}^{\sss (k-1)}(t_{i})$, $i=0,1,\ldots,4$ in red; (c) Projecting points $\tilde{\gamma}^{\sss (k-1)}(t_{2j+1})$ at $\tilde{\gamma}_{\text{proj}}^{\sss (k)}(t_{2j+1})$ (blue dots), $j=0,1$, and the local points are selected by the dotted circle in blue; (d) Calculating the local covariance matrix $\Sigma_{h^{\sss (k)}}(z^{\sss (k)}_{2j+1})$ at the mean points $z^{\sss (k)}_{2j+1}$ (green dots) and computing the first eigenvector $e_1(z^{\sss (k)}_{2j+1})$ (green arrows); (e) Projecting $\tilde{\gamma}^{\sss (k-1)}(t_{2})$ onto $e_1(z^{\sss (k)}_1)$ and $e_1(z^{\sss (k)}_3)$ at $\tilde{\gamma}_{\text{proj},1}^{\sss (k)}(t_{2})$ and $\tilde{\gamma}_{\text{proj},3}^{\sss (k)}(t_{2j})$ (red squares); (f) Updating $\tilde{\gamma}^{\sss (k)}(t_{i})$, $i=0,1,\ldots,4$, in red squares.
}\label{Fig:vector_field_update}
\end{figure}

\begin{itemize}
\item[(1)] \RVSD{\textbf{Choosing scale parameter:} we choose an appropriate scale parameter $h^{\sss (k)}=\rho^kh$, where $h \leq 1$ and $\rho\in (0,1]$ is a shrinkage constant. In our study, we let $\rho=0.9$. One may note that the shrinkage constant $\rho$ makes the scale parameter $h^{\sss (k)}$ decrease during the iterations. Hence, the scale parameter $h^{\sss (k)}$ guarantees the capture of the local variation. }

\item[(2)] \RVSD{\textbf{Calculating local covariance matrix:} the local covariance matrix is determined by using the discrete flow $\tilde{\gamma}^{\sss (k-1)}$ that we have obtained from the previous iteration. Specifically, we use the points $\tilde{\gamma}^{\sss (k-1)}(t_{2j+1})$, $j=0,1,\ldots,N-1$, with odd indices to calculate the local covariance matrix. Determining the local covariance matrix is a vital step for the following updating step of the discrete flow. 
We note that the points $\tilde{\gamma}^{\sss (k-1)}(t_{2j+1})$, $j=0,1,\ldots,N-1$, may not lie inside the data cloud. To capture the local variation accurately, we propose to project these points back inside the data cloud. To this aim, we first project these points to the nearest data points. As the nearest data points might be outliers, we further select the local data points within the distance of $h^{\sss (k)}$ from the nearest data points and obtain the mean points. Eventually, the projected points $\tilde{\gamma}_{\text{proj}}^{\sss (k)}(t_{2j+1})$, $j=0,\ldots,N-1$ are the nearest data points to the mean points. 
Then, the projected points $\tilde{\gamma}_{\text{proj}}^{\sss (k)}(t_{2j+1})$ are used to select the local data points to further compute the local covariance matrix. Denote by 
$\{y_l\}_{l=1}^{n_{2j+1, k}}$ the data points in the neighborhood of the Euclidean ball $B_d(\tilde{\gamma}_{\rm proj}^{{\sss (k)}}(t_{2j+1}),h^{{\sss (k)}})$ with center $\tilde{\gamma}_{\rm proj}^{{\sss (k)}}(t_{2j+1})$ and radius $h^{{\sss (k)}}$. 
Eventually, the local covariance matrix is computed at the mean  $z^{{\sss (k)}}_{2j+1}$ of the local data points $\{y_l\}_{l=1}^{n_{2j+1, k}}$ and can be calculated by 
\begin{equation}
\Sigma_{h^{\sss (k)}}(z^{{\sss (k)}}_{2i+1})=\frac{1}{n_{2j+1, k}}\sum_{l=1}^{n_{2j+1, k}}(y_l-z^{\sss (k)}_{2j+1})\otimes(y_l-z^{\sss (k)}_{2j+1}),
\end{equation}
where $a\otimes a=aa^T$. }

\item[(3)] \RVSD{\textbf{Determining vector field:} following on from the local covariance matrix $\Sigma_{h^{\sss (k)}}(z^{{\sss (k)}}_{2j+1})$ that we obtained in the previous step, the vector field is to determine at the mean points $z^{{\sss (k)}}_{2j+1}$. Denote by $W(\tilde{\gamma}^{\sss (k)}(t_{2j+1}))$ the vector field at point $\tilde{\gamma}^{\sss (k)}(t_{2j+1})$, $j=0,1,\ldots,N-1$. 
By now, the local variation is captured by the local covariance matrix $\Sigma_{h^{\sss (k)}}(z^{{\sss (k)}}_{2j+1})$. Hence, the direction along the first eigenvector $e_1(z^{{\sss (k)}}_{2j+1})$ shows the maximum variation. We let the vector field 
$W(\tilde{\gamma}^{\sss (k)}(t_{2j+1}))=e_1(z^{{\sss (k)}}_{2j+1})$.}

\item[(4)] \RVSD{\textbf{Updating:} as the boundary points are fixed, we first let $\gamma^{\sss (k)}(t_0)=\bar{x}_1$ and $\gamma^{\sss (k)}(t_{2N})=\bar{x}_2$. 
For the points with odd indices, we let $\tilde{\gamma}^{\sss (k+1)}(t_{2j+1})=z^{{\sss (k)}}_{2j+1}$, $j=0,1,\ldots,N-1$. 
The vector field $W(\tilde{\gamma}^{\sss (k)}(t_{2j+1}))$ is used to update the points with even indices. Specifically, we map the points $\tilde{\gamma}^{\sss (k-1)}(t_{2j})$ to the directions of the two adjacent vector fields $W(\tilde{\gamma}^{\sss (k)}(t_{2j-1}))$ and $W(\tilde{\gamma}^{\sss (k)}(t_{2j+1}))$ for $j=1,2,\ldots,N-1$. 
We denote the two projected points by $\tilde{\gamma}_{{\rm proj},2j-1}^{\sss (k)}(t_{2j})$ and $\tilde{\gamma}_{{\rm proj},2j+1}^{\sss (k)}(t_{2j})$. 
Hence, we update the points $\tilde{\gamma}^{\sss (k)}(t_{2j})$ by using the mean point of $\tilde{\gamma}_{{\rm proj},2j-1}^{\sss (k)}(t_{2j})$ and $\tilde{\gamma}_{{\rm proj},2j+1}^{\sss (k)}(t_{2j})$. }
\end{itemize}

\RVSD{
It is crucial to ensure that the random fixed boundary flow always moves along the direction that maximizes the vector field. Therefore, a stop criterion is necessary to the implementation. According, we terminate the iteration process when the optimization function $f(\tilde{\gamma}^{\sss (k)})$ does not change too much. 
Lastly, interpolation and projection will be implemented to ensure that the points $\tilde{\gamma}(t_i)$ on the resulting random fixed boundary flow are equidistant and lie on the manifold. The detailed algorithm is summarized in Algorithm \ref{RFBF algorithm}. The convergence of the random fixed boundary flow will be investigated in Section \ref{SEC:CONV_RFBF_ALGORITHM}. }

\begin{center}
\scalebox{0.7}{
\begin{algorithm}[H]
\RVSD{
\textbf{Input:} Starting point $\bar{x}_1$, ending point $\bar{x}_2$, resolution $N$, scale parameter $h \leq 1$, shrinkage constant $\rho\in (0,1]$, and stopping criterion $\epsilon$.}
\begin{itemize}
\item[1.] Choose an initial discrete flow $\tilde{\gamma}^{\sss (0)}(t_i)$, $i=0, 1,\cdots, 2N$, with $\tilde{\gamma}^{\sss (0)}(t_0) = \bar{x}_1$ and $\tilde{\gamma}^{\sss (0)}(t_{2N}) = \bar{x}_2$; 
\item[2.] Set $k=1$;
\begin{itemize}
\item[(1)] Choose scale parameter $h^{\sss (k)}=\rho^kh$;
\item[(2)] Calculate local covariance matrix $\Sigma_{h^{\sss (k)}}$ by using the points $\tilde{\gamma}^{{\sss (k-1)}}(t_{2j+1})$, $j=0,1,\ldots,N-1$. For each $j$,
\begin{itemize}
\item[(a)] project $\tilde{\gamma}^{{\sss (k-1)}}(t_{2j+1})$ to the data point at $\tilde{\gamma}_{\rm proj}^{{\sss (k)}}(t_{2j+1})$;
\item [(b)] select data points $\{y_l\}_{l=1}^{n_{2j+1,k}}$ in the neighborhood $B_d(\tilde{\gamma}_{\rm proj}^{{\sss (k)}}(t_{2j+1}),h^{{\sss (k)}})$, and compute the mean point $z^{{\sss (k)}}_{2j+1}$ of $\{y_l\}_{l=1}^{n_{2j+1,k}}$;
\item[(c)] 
using $\{y_l\}_{l=1}^{n_{2j+1,k}}$, compute the local covariance matrix $\Sigma_{h^{\sss (k)}}(z^{{\sss (k)}}_{2j+1})$ at $z^{{\sss (k)}}_{2j+1}$;
\end{itemize}
\item[(3)] Determine vector field $W(\tilde{\gamma}^{\sss (k)}(t_{2j+1}))=e_1(z^{{\sss (k)}}_{2j+1})$ for $j=0,1,\ldots,N-1$;
\item[(4)] Update the flow $\tilde{\gamma}^{{\sss (k)}}(t_{i})$, $i=0,1,\ldots,2N$;
\end{itemize}
\item[3.] If 
$|f(\tilde{\gamma}^{\sss (k)})-f(\tilde{\gamma}^{\sss (k-1)})|\le \epsilon$, output the discrete flow $\tilde{\gamma}^{\sss (k)}(t_i)$, $i=0,1,\cdots, 2N$. Otherwise, let $k \to k+1$ and go to step 2;
\item[4.] Output the discrete flow 
$\tilde{\gamma}(t_i)$, $i=0,1,\cdots, 2N$, after interpolation and mapping the discrete flow $\tilde{\gamma}^{\sss (k)}(t_i)$ onto the manifold. 
\end{itemize}
\caption{Obtaining Random Fixed Boundary Flow $\tilde{\gamma}$} \label{RFBF algorithm}
\end{algorithm}
}
\end{center}

\subsection{Convergence of the random fixed boundary flow} \label{SEC:CONV_RFBF_ALGORITHM}

In the following statement, $\I(x,h) = \{i:x_i \in B_d(x,h)\}$ and $|\I(x,h)|$ denotes the cardinality of $\I(x,h)$.
We use upper $C$, $C_0,  C_1, \cdots$ or lower $c, c_0, c_1, \cdots$ to denote constants greater or less than 1. Here, a constant means a value independent of $h$, $h^{{\sss (k)}}$ and $x$. Values of $C$ and $c$ with various subscripts may differ from line to line.

Recalling our Assumption \ref{assumption:noise}, samples are blurred by Gaussian noise.
Hence, by Gaussian concentration, the maximal distance between a point $x_i$ and $\gamma^\ast$ is bounded above by $\sigma(\sqrt{d} + \sqrt{\ln(n^C)})$ with probability at least $1-n^{-C}$. If $\sigma$ is sufficiently small such that
\[
 \sqrt{d} \leq 1/(2\sqrt{\sigma}), \quad  {\rm and} \quad n \leq \exp \{\frac{1}{4C\sigma}\},
\]
we can further bound the maximal distance between a point $x_i$ and $\gamma^\ast$ above by $\sqrt{\sigma}$, with probability $1-n^{-C}$, since the following holds
\begin{align}\label{bound:sample}
 \max_i \ d(x_i, \gamma^\ast) \leq \sigma(\sqrt{d}+\sqrt{\ln(n^C}) \leq \sigma( \frac{1}{2\sqrt{\sigma}} + \frac{1}{2\sqrt{\sigma}}) \leq \sqrt{\sigma}.
\end{align}
This inequality above shows that the samples mainly lie in the tube $T_* = \{x : d(x,\gamma^\ast) \leq \sqrt{\sigma}\}$ surrounding $\gamma^\ast$. Considering a point $x$ satisfying $d(x,\gamma^\ast) \leq \epsilon$ with $\epsilon > \sqrt{\sigma}$, the intersection $ B_d(x, 2\epsilon) \cap T_* $ cannot be ignored, hence the following assumption $B_d(x, 2\epsilon) \cap X \neq \emptyset$ holds true. 

\begin{assumption} \label{assumption:sample}
  For any $\epsilon > \sqrt{\sigma}$, if $x$ satisfies $d(x,\gamma^\ast) \leq \epsilon$, then $ B_d(x, 2\epsilon) \cap X \neq \emptyset$.
\end{assumption}

Note that Step 3(a) of Algorithm \ref{RFBF algorithm} projects points to the data cloud by finding its nearest samples in $X$. Assumption \ref{assumption:sample} bounds the distance between the given point and the projected point above, which essentially leads to the convergence.
Algorithm \ref{RFBF algorithm} selects decreasing scales $h^{{\sss (k)}} = \rho h^{{\sss (k-1)}} $ with a given $\rho \in (0,1]$ in each iteration, until the scale is less than $4\sqrt{\sigma}$ or the objective function hardly changes. Each iteration takes the output discrete flow of the previous iteration as input, updates the vector field with a smaller scale and outputs a discrete flow using the \Yao{updated} vector field. \Xia{Theorems \ref{THM:K ITERATION} - \ref{prop:dist_xonM} with full proofs in Appendix C in the Supplementary Materials, together prove that the random fixed boundary flow converges to the population flow $\gamma^{\ast}$, given certain conditions of the initial discrete flow.}

Specifically, Theorem \ref{THM:K ITERATION} exploits the $k$-th iteration and bounds $d_H(\tilde{\gamma}^{\sss (k+1)}, \gamma^{\ast})$ above when (a) its input discrete flow is sufficiently close to $\gamma^{
\ast}$, (b) the points in the discrete flow are sufficiently dense, and (c) the points with odd indices are not too close to the two ends of the population flow. Note that (c) is needed since the vector field near the two ends does not follow the population flow. This means that the fixed boundaries $\bar{x}_1$ and $\bar{x}_2$ should be chosen not too close to the two ends, $\gamma^\ast(0)$ and  $\gamma^\ast(r^*)$, in practice. Theorem \ref{THM:GREEDY ALGORITHM} proves that imposing constraints on the initial discrete flow, that is the input discrete flow for $k=0$, also leads to the upper bound of $d_H(\tilde{\gamma}^{\sss (k+1)}, \gamma^{\ast})$. Theorem \ref{prop:dist_xonM} proves the convergence of the random fixed boundary flow, as the projection of $\tilde{\gamma}^{\sss (K)}$ onto $\M$. 

\begin{thm} \label{THM:K ITERATION}
\RVSD{
Suppose the discrete curve at the $k$-th iteration satisfies the following conditions:
\begin{itemize}
    \item[(a)] 
    $d_H(\{\tilde{\gamma}^{{\sss (k)}}(t_{i})\}_{i=0}^{2N^{\sss (k)}},\gamma^\ast) \leq C_1 h^{{\sss (k)}}$,
    \item[(b)] 
    $\|\tilde{\gamma}^{{\sss (k)}}(t_{i+1}) - \tilde{\gamma}^{{\sss (k)}}(t_i)\| \leq C_2 h^{{\sss (k)}}$ for any $i<2N^{{\sss (k)}}$, 
    \item[(c)] $\|\tilde{\gamma}^{{\sss (k)}}(t_{2j+1})-\gamma^\ast(0)\|\geq (2C_1+\Xia{3.5})h^{{\sss (k)}}$ and $\|\tilde{\gamma}^{{\sss (k)}}(t_{2j+1})-\gamma^\ast(r^*)\|\geq (2C_1+\Xia{3.5})h^{{\sss (k)}}$  for any $j = 0, 1, \cdots, N^{{\sss (k)}}-1$,
\end{itemize}
For any given $\delta > 0$, there exists $C$ such that any point in the polyline
\begin{align*}
\tilde{\gamma}^{\sss (k+1)} = & \{ s\big( \gamma_{{\rm proj},2j-1}^{{\sss (k)}}(t_{2j}),\gamma_{{\rm proj},2j+1}^{{\sss (k)}}(t_{2j}) \big) \}_{j =1}^{N^{{\sss (k)}}-1} 
\ \cup \ s\big(\bar{x}_1, \gamma_{{\rm proj},1}^{{\sss (k)}}(t_0) \big) \\
& \ \cup \ s\big(\gamma_{{\rm proj},2N^{{\sss (k)}}-1}^{{\sss (k)}}(t_{2N^{{\sss (k)}}}) , \bar{x}_2 \big) 
\ \cup \ \{ s\big( \gamma_{{\rm proj},2j+1}^{{\sss (k)}}(t_{2j}),\gamma_{{\rm proj},2j+1}^{{\sss (k)}}(t_{2j+2}) \big) \}_{j=0}^{N^{{\sss (k)}}-1} 
\end{align*}
is also within Hausdorff distance $C_\delta{h^{{\sss (k)}}}^2$ to $\gamma^\ast$ with probability $1-\delta$. 
}
\end{thm}

\RVSD{
We only sketch the proof of Theorem \ref{THM:K ITERATION}. Recalling Algorithm \ref{RFBF algorithm}, the polyline $\tilde{\gamma}^{\sss (k+1)}$ is composed of segments passing $\{\tilde{\gamma}^{\sss (k+1)}(t_{2j+1})\}_{j=0}^{N^{\sss (k)}-1}$ and along $\{W(\tilde{\gamma}^{\sss (k+1)}(t_{2j+1}))\}_{j=0}^{N^{\sss (k)}-1}$. 
Lemma \Xia{3.1 in the Supplementary Materials} proves $\{\tilde{\gamma}^{\sss (k+1)}(t_{2j+1})\}_{j=0}^{N^{\sss (k)}-1}$ are within Hausdorff distance $O({h^{{\sss (k)}}}^2)$ to $\gamma^\ast$. Lemma \Xia{3.2} proves the vector field at $\tilde{\gamma}^{\sss (k+1)}(t_{2j+1})$ approximate the tangent vector of $\gamma^\ast$ in the order of $h^{\sss (k)}$. Based on these, Lemma \Xia{3.3 in the Supplementary Materials} proves that the segments which pass $\{\tilde{\gamma}^{\sss (k+1)}(t_{2j+1})\}_{j=0}^{N^{\sss (k)}-1}$ along $\{W(\tilde{\gamma}^{\sss (k+1)}(t_{2j+1}))\}_{j=0}^{N^{\sss (k)}-1}$ approximates $\gamma^\ast$ in the order of ${h^{\sss (k)}}^2$. Hence, the polyline $\tilde{\gamma}^{\sss (k+1)}$ which is composed of these segments is also within Hausdorff distance $O({h^{{\sss (k)}}}^2)$ to $\gamma^\ast$. 
}

\begin{thm}\label{THM:GREEDY ALGORITHM}
\RVSD{
If the conditions (a)-(c) in Theorem \ref{THM:K ITERATION} hold for $k=0$ and the constants in Theorem \ref{THM:K ITERATION} further satisfies
\begin{align*}
C_\delta h^{\sss (0)} \leq C_1 \rho, \quad  \|\gamma^\ast(0) - \gamma^\ast(r^*)\| > (4C_1+7)h^{{\sss (0)}},  \quad C_2 > 4C_1+7
\end{align*}
then for any given $k > 0$, 
 $ d_H(\tilde{\gamma}^{\sss (k+1)}, \gamma^\ast) \leq C{h^{{\sss (k)}}}^2$ with probability $(1-\delta)^{k+1}$.
 }
\end{thm}

\RVSD{
According to the stopping criteria, $h^{{\sss (K)}} = O(\sqrt{\sigma})$ when Algorithm \ref{RFBF algorithm} stops. Hence, the final polyline $\tilde{\gamma}^{\sss (K)}$ satisfies
\[
d_H(\tilde{\gamma}^{\sss (K)}, \gamma^\ast) = O({h^{{\sss (k)}}}^2) = O((\sqrt{\sigma})^2) = O(\sigma).
\]
Note that the interpolation step generates a discrete curve containing $\tilde{\gamma}^{\sss (K)}$, and the projection step will not  change the order of the Hausdorff distance as Theorem \ref{prop:dist_xonM} has proved. To be precise, the final discrete curve of Algorithm \ref{RFBF algorithm} is located in a tube along the population curve $\gamma^\ast$ with a radius in order of $\sqrt{\sigma}$. }

\begin{thm}\label{prop:dist_xonM}
\RVSD{
  If $d(x, \gamma^\ast) = O(h)$, then $d(\tilde{x}, \gamma^\ast) = O(h)$, where $\tilde{x}$ is the projection of $x$ onto $\M$.
}
\end{thm}

}

\section{Simulations}\label{SEC:simulation}

\RVSD{
To illustrate the performance of random fixed boundary flows, we studied several random data sets generated on two manifolds, a unit sphere and a right-circular unit cone. The two manifolds are in $\mathbb{R}^3$ with intrinsic dimension $d=2$. In the simulation, the boundary points were selected manually from the given data set so that there are enough data points around the boundary points to calculate the local variation. 
To generate the random fixed boundary flows, we applied the proposed algorithm with different values of the scale parameter $h$. 
Here, we note that the random fixed boundary flow is a discrete curve with derivatives that approximately capture the direction of the maximum local variation depending on $h$. Throughout the numerical studies in sections \ref{SEC:simulation} and \ref{SEC:realdata}, we use RFBFs to denote random fixed boundary flows.}

In the first part of the simulation, we evaluate the performance of the RFBFs on the unit sphere. 
The noisy data sets are randomly generated from three population flows, which are plotted in purple in Figure \ref{Fig:Simulateddata} (a)-(c). Specifically, Gaussian noise is added to the points on the population flows with a constraint such that the perturbed points remain on the test manifold. In this manner, we generated three noisy data sets, each representing different types of variation on the unit sphere.
The first data set is concentrated around a ``C''-shaped curve on the unit sphere, thus presenting a variation pattern along the geodesic.
After that, we considered two data sets from two non-convex closed flows. In this setting, the simulated data sets present local variation patterns along the non-convex flows.  In particular, the second data set is generated from a quarter of the six-fold star-shaped flow, and the third data set is concentrated around a half of the two-fold star-shaped flow.

\RVSD{
To obtain RFBFs, the initial flows used in our analysis are straight lines connecting $\bar{x}_1$ and $\bar{x}_2$. One may use other initial flows, for example, the geodesic from $\bar{x}_1$ to $\bar{x}_2$. 
Given a set of randomly generated data, we obtained a RFBF with a specific $h$.  For the data sets plotted in Figure \ref{Fig:Simulateddata} (a)-(c), the RFBFs obtained with a specific value of $h$ are illustrated in red in Figure \ref{Fig:Simulateddata} (d)-(f). 
To further investigate the performance, we obtained ten sets of random data for each population flow. The RFBFs are then obtained with a sequence of $h$ for the random data. 
An analysis of the mean errors for the Hausdorff distances between the population flow and RFBFs are summarized in Table \ref{Tab:mean_error}. From the numerical results, we note that the RFBFs are able to capture the variation globally and locally. As we lower $h$, the performance accuracy of the RFBFs improves generally. On the other hand, overfitting may occur as we lower $h$ gradually. }

\RVSD{
For the two non-convex population flows in Figure \ref{Fig:Simulateddata} (b)-(c), we also generated noisy data sets from the whole closed flows. As boundary points are required to obtain RFBFs, we handled these noisy data sets parts by parts. For example, we fitted the noisy six-fold data set quarter by quarter and the noisy two-fold data set half by half. We specified the boundary points for each part of the whole data set and obtained the RFBFs with predetermined values of $h$. The obtained RFBFs are shown in red in Figure \ref{Fig:whole_curve_data}. To compare the performance accuracy, we further applied the level set methods in \citet{liu2017level} to the random data sets and plotted the obtained curves in blue in Figure \ref{Fig:whole_curve_data}. 
In contrast to the level set methods, the RFBFs are able to capture the local variation better, especially at the parts of the curves with high curvature. We also note that the level curve methods reach the locations outside the data cloud at some parts of the two-fold data. }

\begin{figure}[h]
\begin{center}
\subfigure[noisy ``C"-shaped data]{
\includegraphics[width=1.5in]{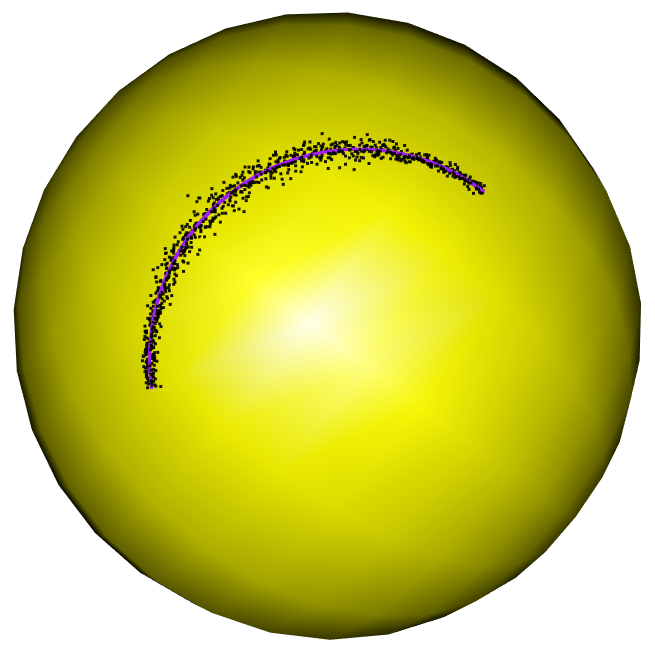}}
\subfigure[noisy six-fold data]{
\includegraphics[width=1.5in]{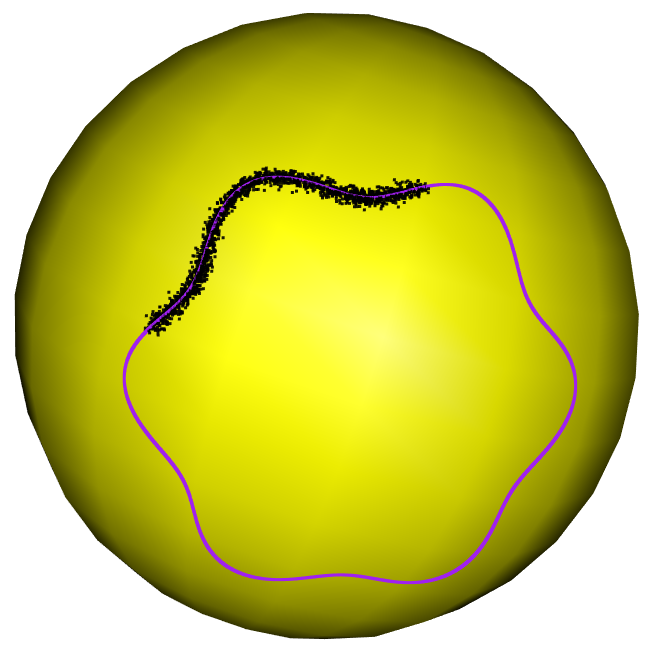}}
\subfigure[noisy two-fold data]{
\includegraphics[width=1.5in]{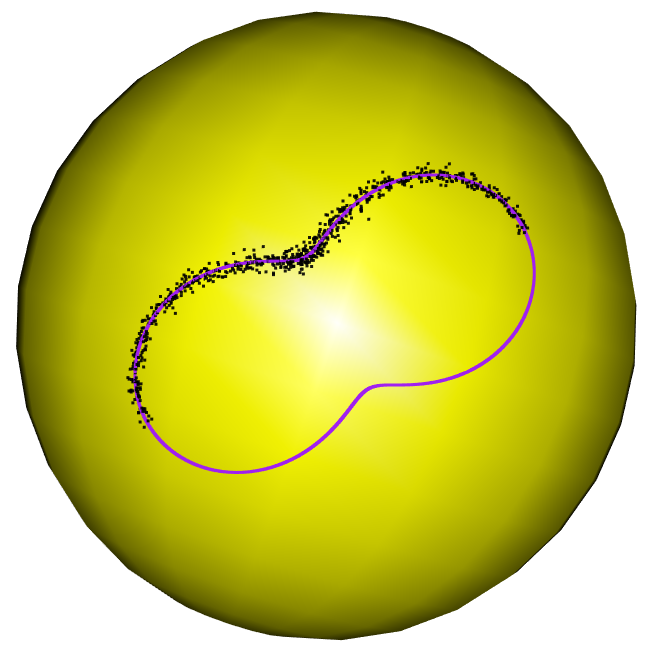}}\\
\subfigure[$h=0.08$]{
\includegraphics[width=1.5in]{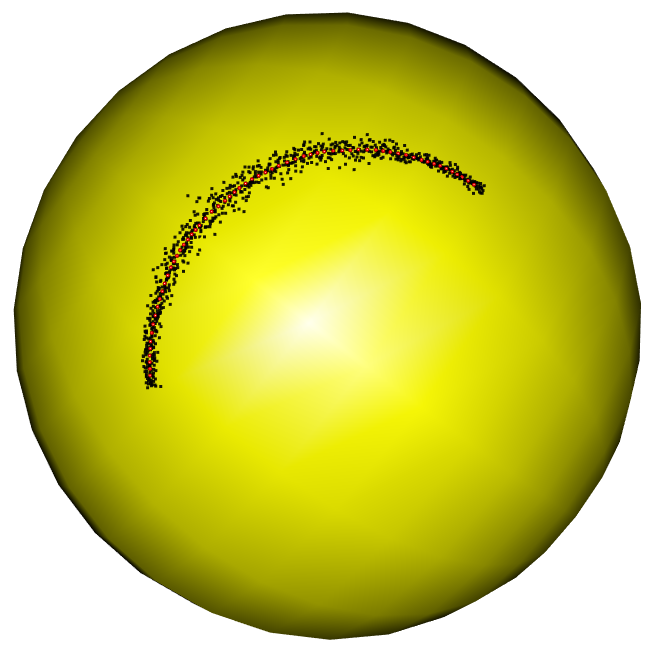}}
\subfigure[$h=0.06$]{
\includegraphics[width=1.5in]{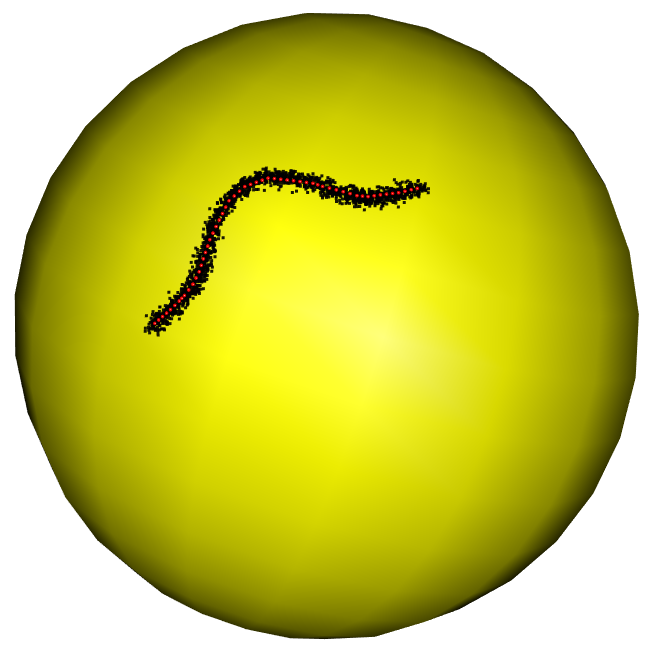}}
\subfigure[$h=0.08$]{
\includegraphics[width=1.5in]{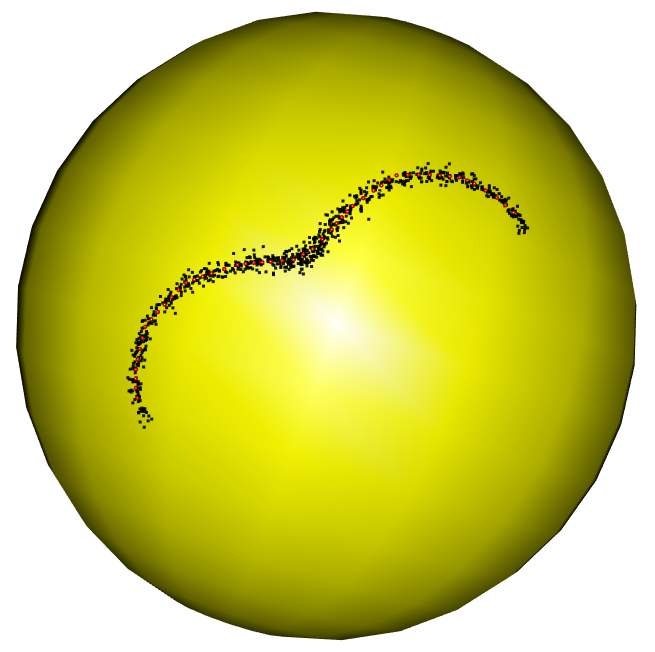}}
\end{center}
\caption{RFBFs (in red) for three random data sets on the unit sphere with the underlying population flows (in purple). 
}\label{Fig:Simulateddata}
\end{figure}

\begin{table}[h]
\centering
\scalebox{0.65}{
\begin{tabular}{c c c c c c| c c c c c c c }
\hline
\multicolumn{6}{c}{Unit Sphere}  & \multicolumn{6}{c}{Right-circular unit cone}\\\hline\hline
$h$ & noisy ``C"-shaped & $h$ &noisy six-fold  &  $h$ &noisy two-fold  & $h$ & noisy band  & $h$ & noisy ``C'' shape  & $h$ & noisy ``S'' shape\\
 \hline
 $0.06$ & 0.0074(0.0017) & $0.04$ & 0.0093(0.0022) & $0.06$ & 0.0122(0.0025)  & 0.12 & 0.0095(0.0021) & 0.06 & 0.0080(0.0018) &  0.06 & 0.0101(0.0025)\\
 $0.08$ & 0.0063(0.0010) & $0.06$ & 0.0090(0.0009) &  $0.08$ & 0.0110(0.0026) & 0.14 & 0.0088(0.0012) & 0.08 & 0.0066(0.0013) & 0.08 & 0.0095(0.0013)\\
 $0.10$ & 0.0063(0.0014) & $0.08$ & 0.0121(0.0012) &  $0.10$ & 0.0133(0.0011) &  0.16 & 0.0094(0.0009) & 0.10 & 0.0067(0.0012) & 0.10 & 0.0126(0.0014) \\
 $0.12$ & 0.0073(0.0011) & $0.10$ & 0.0180(0.0059) &  $0.12$ & 0.0164(0.0009) &  0.18 & 0.0103(0.0009) & 0.12 & 0.0077(0.0009) & 0.12 & 0.0167(0.0013)\\
 $0.14$ & 0.0078(0.0009) & $0.12$ & 0.0257(0.0071) &  $0.14$ & 0.0204(0.0011) &  0.20 & 0.0122(0.0007) & 0.14 & 0.0099(0.0009) & 0.14 & 0.0215(0.0019)\\
 \hline
\end{tabular}}
\caption{Comparison of the mean errors for the Hausdorff distances $d_H(\tilde{\gamma}, \gamma^*)$ with five different values of $h$. Standard deviations based on ten sets of randomly generated data are shown in parentheses.}\label{Tab:mean_error}
\end{table}

\begin{figure}[h]
\begin{center}
\subfigure[noisy six-fold data]{
\includegraphics[width=1.5in]{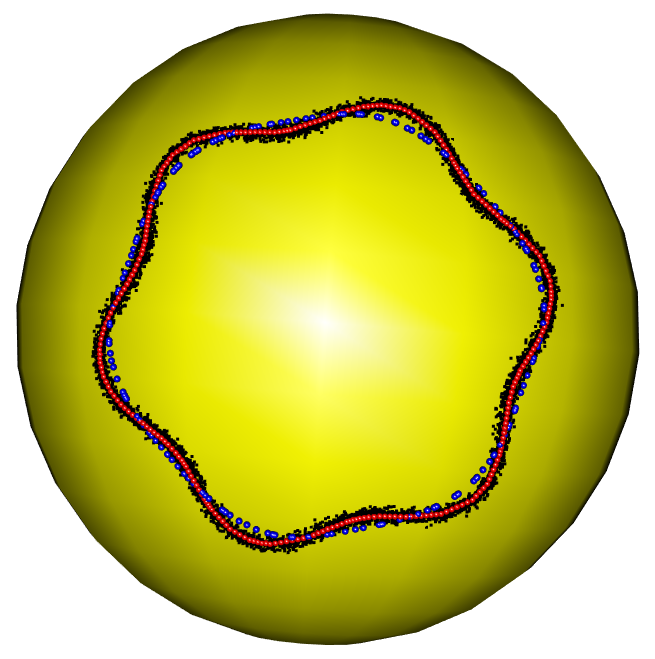}}
\subfigure[noisy two-fold data]{\includegraphics[width=1.5in]{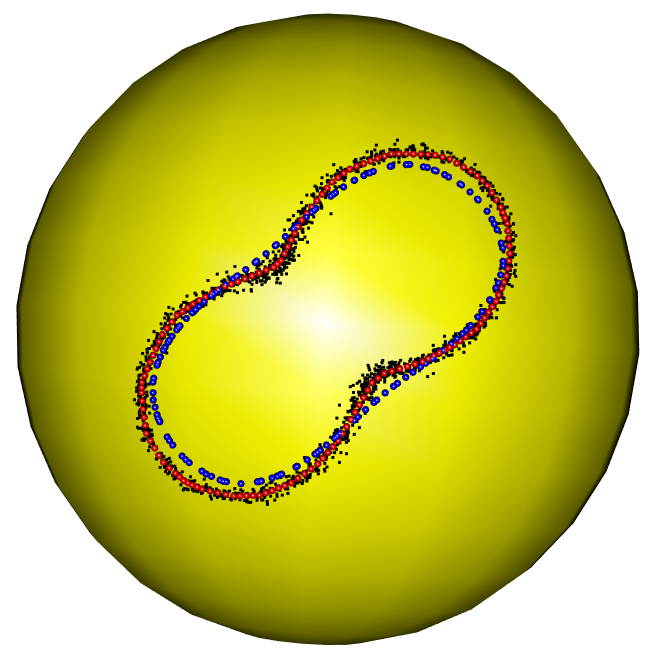}
}
\end{center}
\caption{Comparison of RFBFs and the level curve methods on the unit sphere. Black dots: data points; red dots: RFBFs; blue dots: curves obtained from the level curve methods.
}\label{Fig:whole_curve_data}
\end{figure}

\RVSD{
In the second part of the simulation study, the testing manifold is extended to a right-circular unit cone, with apex at $(0,0,0)$, height $H=1$ and radius $R=1$. Three types of random data sets are generated to examine the performance of RFBFs on the right-circular unit cone. The first data set is concentrated around a band on the cone. For the second and third data sets, they are generated from a ``C''-shaped and ``S''-shaped population flows on the tested manifold. The RFBFs with a predetermined value of $h$ are illustrated in red in Figure \ref{Fig:cone_data}. As the data plotted shown, we observe that the RFBFs work well to capture different types of variations on the cone. 
Similarly, we fitted RFBFs with a sequence of $h$ for ten randomly generated data sets. To examine the performance accuracy, the mean errors of the Hausdorff distances between the population flows and RFBFs are summarized in Table \ref{Tab:mean_error}. As expected, the obtained RFBFs do indeed divine the variation accurately on the cone as we lower $h$. It becomes more challenging to capture the variation accurately for all three types of variation investigated when the variation pattern becomes more complicated. }

\begin{figure}[h]
\begin{center}
\subfigure[noisy band data]{
\includegraphics[width=1.5in]{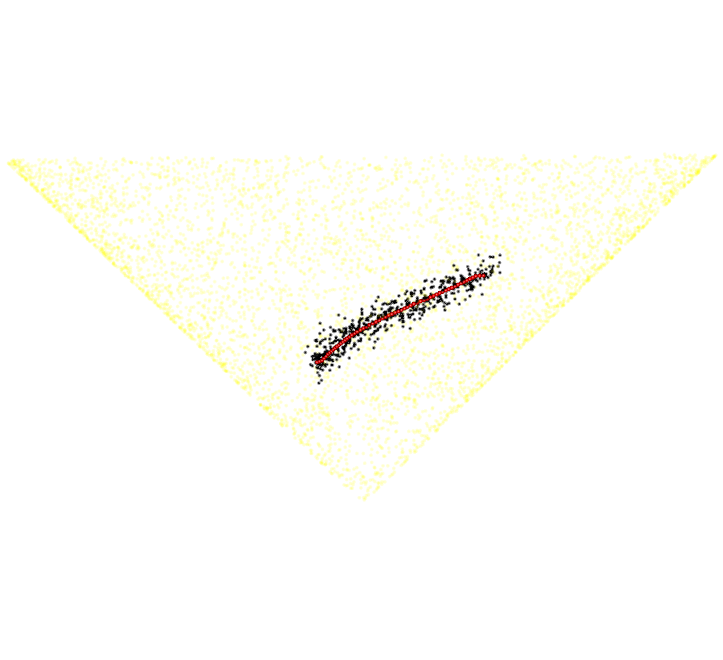}}
\subfigure[noisy ``C'' shape data]{
\includegraphics[width=1.5in]{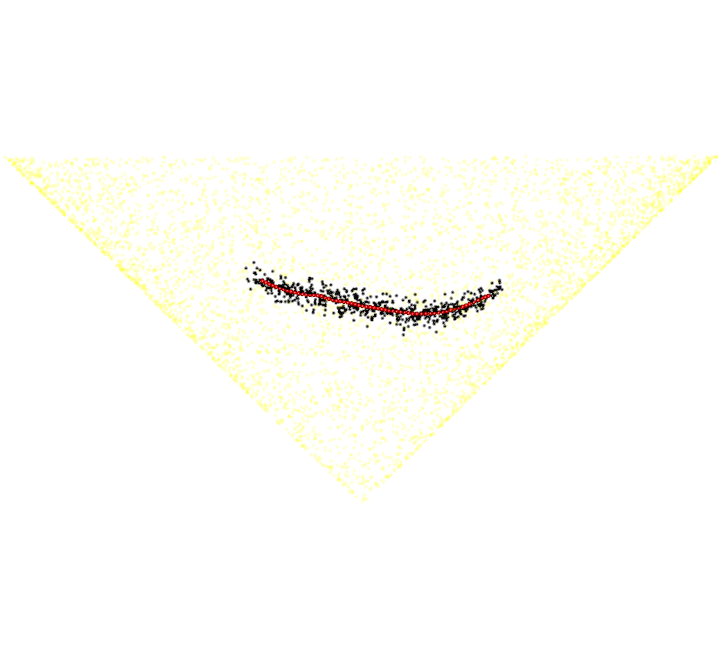}}
\subfigure[noisy ``S'' shape data]{
\includegraphics[width=1.5in]{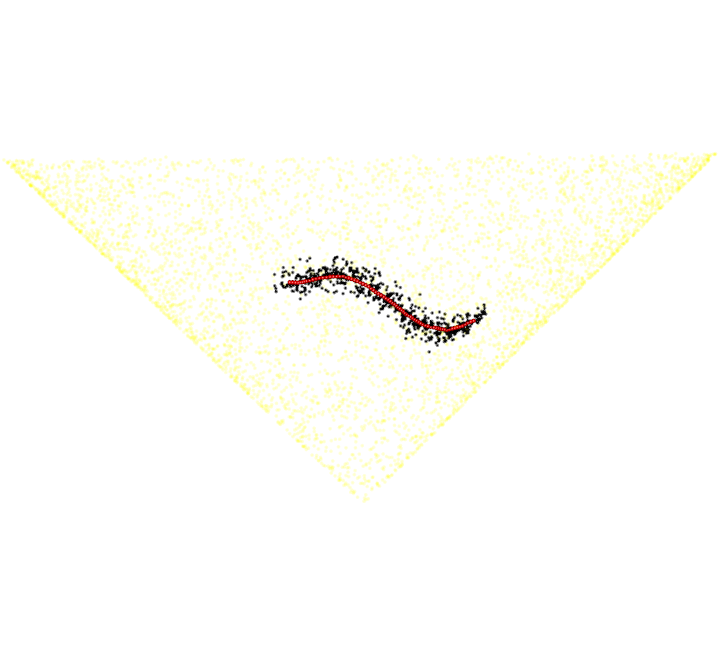}}
\end{center}
\caption{RFBFs (in red) for three different types of random data sets on the right-circular unit cone. Values of $h$ used: (a) $h=0.14$; (b) $h=0.08$; (c) $h=0.08$. }\label{Fig:cone_data}
\end{figure}

\section{Real Data Application}\label{SEC:realdata}

\subsection{Seismological Data}
\RVSD{
Here we explain the full analysis of the previously mentioned seismological events. The data set was sourced from the International Seismological Center (ISC) and features significant earthquakes (magnitude $5.5$ in Richter scale and above, including continental events of magnitude $5.0$) between 1904 and 2015. The earthquake epicentre data is plotted in black in Figure \ref{Fig:earthquake_eigenvalue}. Before we fit the RFBF for the earthquake data, we first investigate the distribution of the first eigenvalue for the data. This is shown in Figure \ref{Fig:earthquake_eigenvalue}, from which we observe that the variation of the first eigenvalue among the earthquake epicentres along the distribution of the earthquakes is quite non-uniform. Furthermore, we also observe that the first eigenvalue changes with different values of $h$, which changes the determination of local variation. Hence, the analysis of seismological events is an example with a varying first eigenvalue and we will investigate the performance of RFBF for this case. }


\begin{figure}[h]
\begin{center}
\subfigure[$h_1$]{
\includegraphics[width=1.5in]{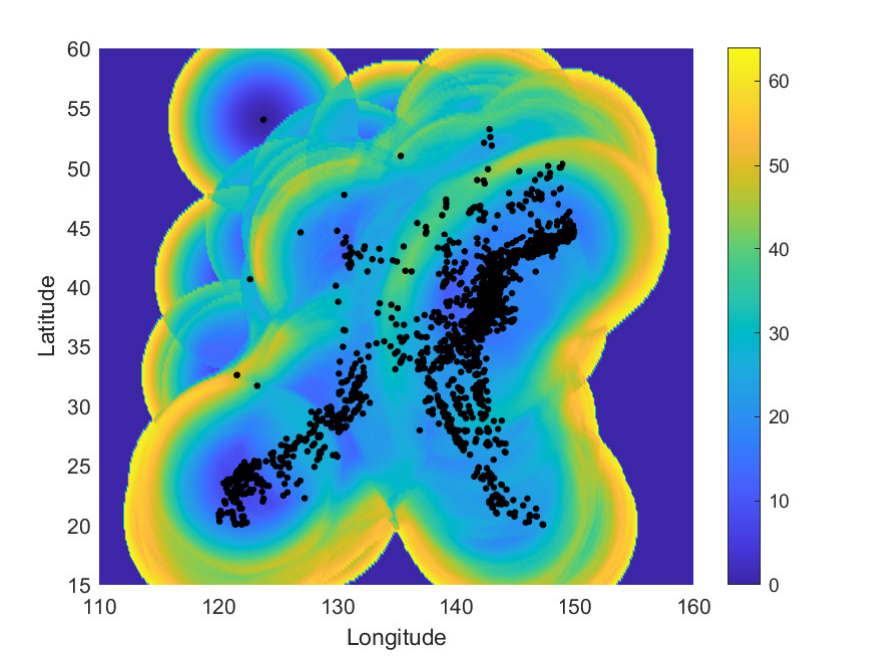}}
\subfigure[$h_2$]{
\includegraphics[width=1.5in]{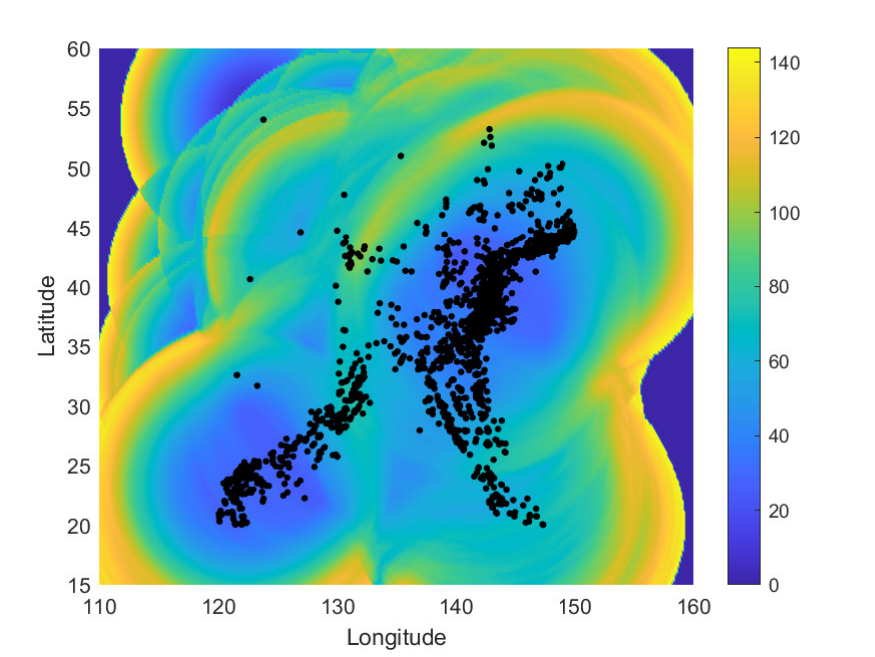}}
\subfigure[$h_3$]{
\includegraphics[width=1.5in]{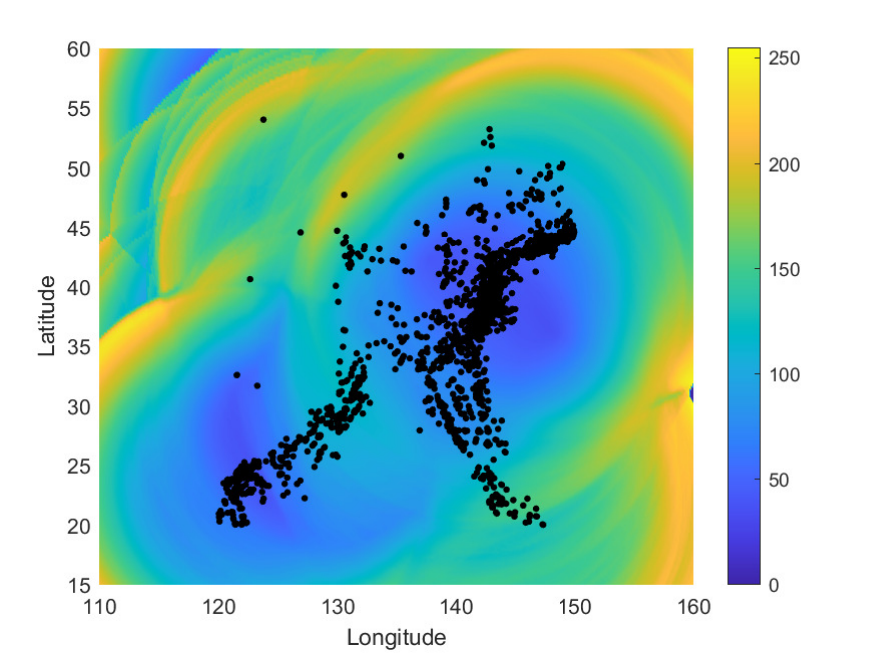}}
\end{center}
\caption{Distribution of the first eigenvalue $\lambda_1$ for the seismological events with various $h$, where $h_1<h_2<h_3$. Black dots: earthquake epicentres; the color bars represent the magnitude of $\lambda_1$.
}\label{Fig:earthquake_eigenvalue}
\end{figure}

\RVSD{
We note that earthquakes tend to occur around the tectonic plate boundaries. As has been mentioned earlier, the shape of the plate boundaries shown in Figure \ref{Fig:plate}(a) carries the global variation (from east to west, or north to south) and the localized variation along different plates. 
If we select $\bar{x}_1$ and $\bar{x}_2$ around the Philippine Sea plate manually, we expect the RFBFs would move along the plate boundary and mirror the blue curve shown in Figure \ref{Fig:plate}(c). At the same time, the movement of the RFBFs will also reflect the local variation pattern of the data, which is captured by $h$. 
In our analysis, we scaled the data onto the unit sphere and selected three different sets of $\bar{x}_1$ and $\bar{x}_2$ along the Philippine Sea plate manually. Figure \ref{Fig:FBF_earthquake} illustrates the earthquake data on a flat world atlas with the three sets of $\bar{x}_1$ and $\bar{x}_2$, namely (a)-(c), (d)-(f) and (g)-(i). 
To visualise and compare the performance, we fit RFBFs using three values of $h$. 
As we expected, the RFBFs move along the boundary of the Philippine Sea plate and capture the variation between the given boundary points. Furthermore, we let $h$ vary and visualize the RFBFs that reflect the various localized variation patterns. 
Given the boundary points, we note that the RFBFs work well in capturing the variation patterns of the data. As we lower $h$, the RFBFs uncover the global and local variation pattern more accurately. For example, when we set $h=0.075$, the RFBFs in Figure \ref{Fig:FBF_earthquake} (a), (d) and (g) move inside the data cloud and trace the global variation from south to north better than the RFBFs in the other plots of Figure \ref{Fig:FBF_earthquake}. When we gradually increase the value of $h$, more data points will be involved in the determination of the local variation and this also influences the trend of the RFBFs. In the last three plots of Figure \ref{Fig:FBF_earthquake}, we select two sets of boundary points with opposite directions. Comparing the results in Figure \ref{Fig:FBF_earthquake} (a)-(c) and (g)-(i), we note that the direction of the boundary points does not inordinately affect the RFBFs with the same $h$. }

\begin{figure}[h]
\begin{center}
\subfigure[$h=0.075$]{
\includegraphics[width=1.2in]{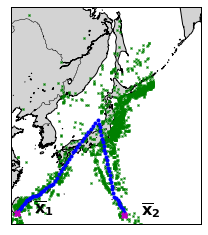}}
\subfigure[$h=0.15$]{
\includegraphics[width=1.2in]{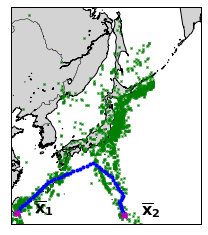}}
\subfigure[$h=0.226$]{
\includegraphics[width=1.2in]{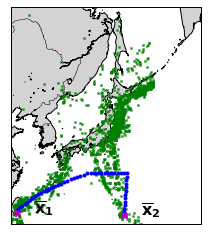}}
\subfigure[$h=0.075$]{
\includegraphics[width=1.2in]{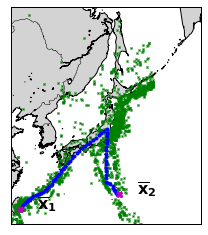}}\\
\subfigure[$h=0.15$]{
\includegraphics[width=1.2in]{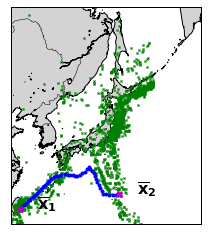}}
\subfigure[$h=0.226$]{
\includegraphics[width=1.2in]{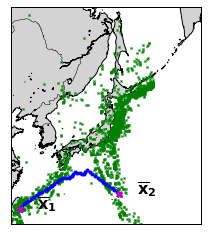}}
\subfigure[$h=0.075$]{
\includegraphics[width=1.2in]{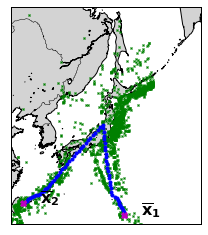}}
\subfigure[$h=0.15$]{
\includegraphics[width=1.2in]{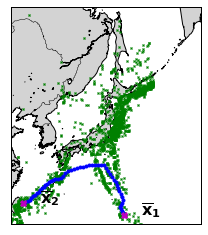}}
\subfigure[$h=0.226$]{
\includegraphics[width=1.2in]{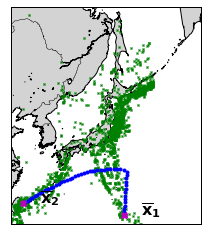}}
\end{center}
\caption{RFBFs for the seismological events with three different sets of boundary points. Green dots: earthquake epicentres; purple crosses: boundary points; blue dots: RFBFs.  
}\label{Fig:FBF_earthquake}
\end{figure}


\subsection{Labeled Faces in the Wild}
In this section, we consider another concrete case -- Labeled Faces in the Wild (LFW) in \citet{LFWTech}. 
The data set comprising face photographs is designed to provide a system of face recognition with over $13,000$ images of faces collected from the web. Each face image is labeled with the name of the person in that image. Note also that among those face images are $1,680$ people who have two or more distinct photographs in the data set.

\RVSD{
In our study, we downloaded $264$ images of $66$ people with four images of each person. To facilitate the analysis, the face region was cropped from the original image and resized to $50\times 37$ pixels. The images of the face region for the $66$ individuals can be found in the Supplementary Materials. As the analysis uses four different images for each individual, the data set can be written as $\{x_i\}_{i=1}^{264}$, where $x_i$ are vectors in the ambient space $\mathbb{R}^{1850}$. We assume that the data points $\{x_i\}_{i=1}^{264}$ lie on the unit sphere $\mathbb{S}^{1849}$, which is embedded in $\mathbb{R}^{1850}$. 
To begin with, we chose two images with the largest distance from one another in the ambient space, setting them as $\bar{x}_1$ and $\bar{x}_2$. As shown in Figure \ref{Fig:face_changing}, the image of Andy Roddick in Figure \ref{Fig:face_changing}(a) is the starting image, and the image of Jack Straw in Figure \ref{Fig:face_changing}(p) is the ending image in our analysis. Then, we fit RFBFs with various values of $h$. }

The obtained RFBFs are discrete flows of face images which capture the variation of facial structure from the starting image to the ending image. 
With the exception of the boundary images on the RFBFs, we generated a sequence of fake faces, which are plotted in Figure \ref{Fig:face_changing} (b)-(o). 
The person plotted in each fake face image is not a real person that can be identified in the given image set. On the contrary, the person is constructed using the characteristics extracted from the local and global variation pattern of the given images. 
The intermediate face images on the RFBF reflect the progressive face changing from the starting image to the ending image. 
There are some noteworthy conclusions that we draw from the RFBFs. 
First, the skin tone of Andy Roddick shown in the starting image of Figure \ref{Fig:face_changing} (a) appears somewhat wheatish, while Jack Straw's face, plotted in the ending image of Figure \ref{Fig:face_changing} (p), possesses a light skin tone. Through the fake faces constructed on the RFBF, we are able to observe the gradual changes of skin tone from dark to light. 
Second, we note that the Andy Roddick dons a cap in the starting image and Jack Straw’s hairstyle features a fringe in the ending image. For the first few images in Figure \ref{Fig:face_changing} (b)-(f), the fake faces on the RFBF are also wearing caps. In the last few images plotted in Figure \ref{Fig:face_changing} (m)-(o), the fake faces of the RFBF have fringes. Hence, we are also able to monitor the change of hairstyle through the intermediate fake faces on the RFBF.  
Although RFBFs are able to reveal some progressive face changes, the characteristics captured by the RFBFs are one-dimensional. Hence, the variation pattern analyzed by the RFBFs is limited when we are dealing with high-dimensional data with large $m$ values. We will consider the extension of RFBFs in the future. 

\begin{figure}[h]
\begin{center}
\subfigure[]{
\includegraphics[width=0.4in]{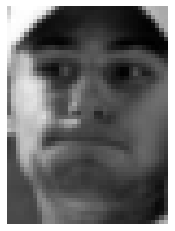}}
\subfigure[]{
\includegraphics[width=0.4in]{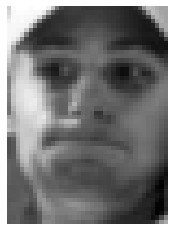}}
\subfigure[]{
\includegraphics[width=0.4in]{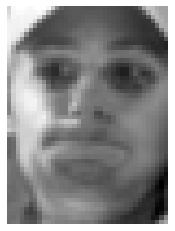}}
\subfigure[]{
\includegraphics[width=0.4in]{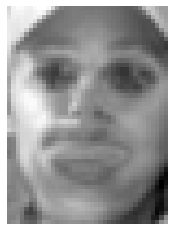}}
\subfigure[]{
\includegraphics[width=0.4in]{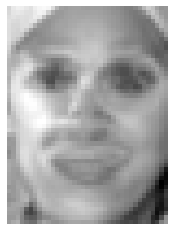}}
\subfigure[]{
\includegraphics[width=0.4in]{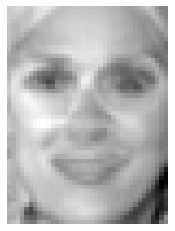}}
\subfigure[]{
\includegraphics[width=0.4in]{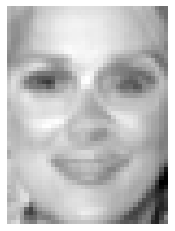}}
\subfigure[]{
\includegraphics[width=0.4in]{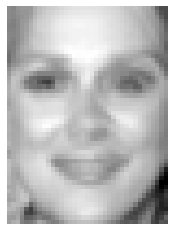}}
\subfigure[]{
\includegraphics[width=0.4in]{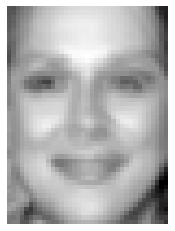}}
\subfigure[]{
\includegraphics[width=0.4in]{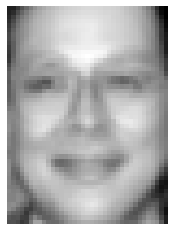}}
\subfigure[]{
\includegraphics[width=0.4in]{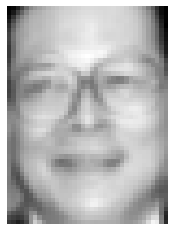}}
\subfigure[]{
\includegraphics[width=0.4in]{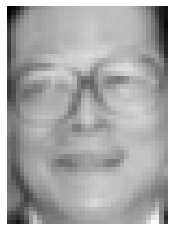}}
\subfigure[]{
\includegraphics[width=0.4in]{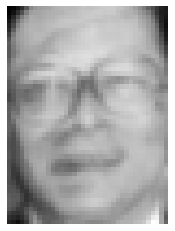}}
\subfigure[]{
\includegraphics[width=0.4in]{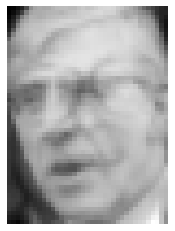}}
\subfigure[]{
\includegraphics[width=0.4in]{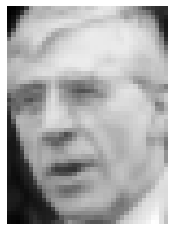}}
\subfigure[]{
\includegraphics[width=0.4in]{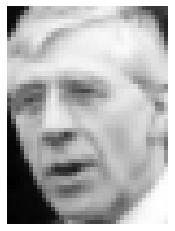}}
\end{center}
\caption{Face images obtained from the RFBFs. Image (a): starting face image; image (p): ending face image; image (b)-(o): fake face images generated by the RFBF.}\label{Fig:face_changing}
\end{figure}


\section{Fixed Boundary Flow for Non-random Data in Euclidean Space}\label{SEC:FBF_EUCLIDEAN}

The aim of this section is to prove that fixed boundary flows for non-random data are canonical, in the sense that they will pass through the usual principal component, in the context of Euclidean spaces. Hereafter, we suppose $\mathcal{M}$ is a linear subspace of $\mathbb{R}^d$, and $h = \infty$, which implies
\[
\Sigma_{n,h}(x) = \frac{1}{n}\sum_{i=1}^n (x_i - x)(x_i - x)^T.
\]
Under this configuration, we will figure out the supremum of $\mathcal{L}(W,\gamma)$ defined in (\ref{opt_original}) subjected to the constraint $\gamma \in \Gamma(\bar{x}_1, \bar{x}_2)$ defined in \eqref{curve_set_original}.

\begin{proposition}\label{prop_easy}
Suppose $\gamma_*:[0,\Xia{C\Delta}] \to \mathcal{M}$ such that
\[
\gamma_*(0) = \bar{x}_1 \quad {\rm and} \quad \dot{\gamma}_*(t) = W(\gamma_*(t)) \quad {\rm for} \ t \in (0,\Xia{C\Delta}] .
\]
If $\gamma_*(\Xia{C\Delta}) = \bar{x}_2$, then $\gamma_*$ is the unique optima of (\ref{opt_original})
\end{proposition}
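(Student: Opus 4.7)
The plan is to establish a pointwise upper bound on the integrand via Cauchy-Schwarz, show that the resulting objective cannot exceed $1$, exhibit $\gamma_*$ as an attainer, and finally argue uniqueness through ODE theory.

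First I would record the two normalizations built into the problem. By the first constraint defining $\Gamma(\bar{x}_1,\bar{x}_2)$ in (\ref{curve_set_original}), every admissible $\gamma$ has unit speed, so $\|\dot{\gamma}(t)\|=1$. Since $W(x)$ is a (normalized) eigenvector of $\Sigma_h(x)$ by (\ref{lambda_W}), one has $\|W(\gamma(t))\|=1$ as well. By the Cauchy-Schwarz inequality,
\[
\langle \dot{\gamma}(t), W(\gamma(t))\rangle \;\le\; \|\dot{\gamma}(t)\|\,\|W(\gamma(t))\| \;=\; 1,
\]
with equality at a given $t$ if and only if $\dot{\gamma}(t) = W(\gamma(t))$ (both being unit vectors pointing the same way). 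Integrating and recalling that $\ell(\gamma) = r \le 1$ yields
\[
\mathcal{L}(W,\gamma) \;=\; \int_0^{\ell(\gamma)} \langle \dot{\gamma}, W(\gamma(t))\rangle\, dt \;\le\; \ell(\gamma) \;\le\; 1.
\]

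Next I would verify that $\gamma_*$ attains this bound. Under the hypothesis $\dot{\gamma}_*(t) = W(\gamma_*(t))$, the curve $\gamma_*$ has unit speed (hence $\ell(\gamma_*)=1$), is injective on $[0,1]$ (an arc-length curve cannot self-intersect without violating unit speed together with continuity of $W$, and this can be argued locally by the flow being a diffeomorphism near each point), and by assumption satisfies $\gamma_*(0)=\bar{x}_1$, $\gamma_*(1)=\bar{x}_2$. Therefore $\gamma_* \in \Gamma(\bar{x}_1,\bar{x}_2)$, and substituting directly gives $\mathcal{L}(W,\gamma_*) = \int_0^1 \|W(\gamma_*(t))\|^2\, dt = 1$, matching the upper bound.

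Finally, uniqueness. Suppose $\gamma \in \Gamma(\bar{x}_1,\bar{x}_2)$ also achieves $\mathcal{L}(W,\gamma)=1$. Then both inequalities in the Cauchy-Schwarz chain must be equalities: $\ell(\gamma)=1$ and $\langle\dot{\gamma}(t),W(\gamma(t))\rangle = 1$ for almost every $t$, which (by the $C^2$ regularity of $\gamma$ and continuity of $W$) upgrades to $\dot{\gamma}(t) = W(\gamma(t))$ for every $t \in [0,1]$. Thus both $\gamma$ and $\gamma_*$ solve the initial value problem $\dot{y} = W(y)$, $y(0)=\bar{x}_1$. Invoking the Picard-Lindel\"of theorem, using the local Lipschitz regularity of the vector field $W$ established in the references cited in Section 2.1, we conclude $\gamma = \gamma_*$.

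The main obstacle I anticipate is the uniqueness step: I must be sure that $W$ is at least locally Lipschitz on the relevant neighborhood (a point inherited from the differentiability of $W$ asserted after (\ref{lambda_W})) and that the equality case of Cauchy-Schwarz for unit-norm vectors genuinely forces pointwise equality of $\dot{\gamma}$ and $W(\gamma)$ rather than mere proportionality. Both are handled by the unit-speed normalization and the cited smoothness of $W$, but it is important to flag that the argument leans on these structural assumptions rather than on anything specific to the Euclidean ambient space introduced at the start of Section 5.
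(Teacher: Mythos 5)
Your proposal is correct and follows essentially the same route as the paper's own proof: bound the objective by $1$ via Cauchy--Schwarz using the unit norms of $\dot{\gamma}$ and $W$, observe that equality forces $\dot{\gamma}(t)=W(\gamma(t))$ for all $t$, and conclude uniqueness from the initial value problem. You merely fill in details the paper leaves implicit (admissibility and attainment by $\gamma_*$, and the Picard--Lindel\"of argument behind the assertion that $\gamma_*$ is the only such curve), which is a welcome tightening rather than a different approach.
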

\begin{proof}
Since $W(\gamma(t))$ and $\dot{\gamma}(t)$ are units for any $\gamma \in \Gamma(\bar{x}_1, \bar{x}_2)$, we have
\begin{align*}
    \sup_{\gamma} \mathcal{L}(W,\gamma) \leq \sup_{\gamma} \int_0^{C\Delta} \langle \dot{\gamma}(t), W(\gamma(t)) \rangle \leq \int_0^{C\Delta} \|\dot{\gamma}(t)\| \| W(\gamma(t))\|dt = C\Delta,
\end{align*}
and the equation holds only if $\dot{\gamma}(t) = W(\gamma(t))$ for any $t \in [0,C\Delta]$. Hence, $\gamma_*$ is the only curve that enables the equation to hold, and is accordingly the unique optima of (\ref{opt_original}).
\end{proof}

Proposition \ref{prop_easy} analyzes the optima of (\ref{opt_original}) under a strict condition that $\gamma_*(t) = \bar{x}_2$ with $t = C\Delta$. If the condition is relaxed to be $t \leq C\Delta$, things are more difficult. For further analysis, we suppose the original point of $\mathcal{M}$ to be $\bar{x} = \sum_{i=1}^n x_i$, and $[v_1, \cdots, v_d]$ to be the basis with $v_1 = W(\bar{x})$.
For convenience, we denote $z_i = v_i^Tx$ to be the $i$-th coordinate of any $z \in \M$ and $V_\bot = [v_2, \cdots, v_d] \in \mathbb{R}^{d \times (d-1)}$ hereafter.


Before giving our final proposition, we define some important sets and curves first. With $\odot$ representing \Xia{Hadamard
multiplication}, 
we denote a subset of $\Gamma(\bar{x}_1,\bar{x}_2)$, where the curves have the same direction with $W$,
\begin{align*}
    \Gamma_+(\bar{x}_1,\bar{x}_2) = \{\gamma \in \Gamma(\bar{x}_1,\bar{x}_2): \dot{\gamma}(t) \odot  W(\gamma(t)) \geq 0 \ {\rm for \ any} \ t \}.
\end{align*}
The red curves in Figure \ref{Fig:settings} (a) demonstrate flows satisfying $\dot{\gamma}(t) \odot W(\gamma(t)) \geq 0$, that is, the curves have the same direction as $W$.
Denote $p_1 = v_1v_1^T\bar{x}_1$ and $p_2 = v_1v_1^T\bar{x}_2$ 
as the projections of $\bar{x}_1$ and $\bar{x}_2$, respectively, onto the first axis. And $\Gamma_+(\bar{x}_1, p_1, v_1)$($\Gamma_+(\bar{x}_2, p_2, v_1)$) as the set of the curves from $\bar{x}_1$($\bar{x}_2$) to $p_1$($p_2$), orthogonal to $v_1$ and satisfying $\dot{\gamma}(t) \odot W(\gamma(t)) \geq 0$. 
We set
\begin{align*}
    \bar{\gamma}_1 = {\rm argsup}_{\gamma \in \Gamma_+(\bar{x}_1,p_1,v_1)} \mathcal{L}(W,\gamma), \quad
    \bar{\gamma}_2 = {\rm argsup}_{\gamma \in \Gamma_+(p_2, \bar{x}_2,v_1)} \mathcal{L}(W,\gamma).
\end{align*}
And we also set $\bar{\gamma}:[0,\|p_1-p_2\|] \to \mathcal{M}$ as the straight line between $p_1$ and $p_2$, that is $\bar{\gamma}(t) = p_1 + \frac{t}{\|p_2-p_1\|}(p_2-p_1)$.

Let $\gamma_s$ be the concatenation of $\bar{\gamma}_1, \bar{\gamma}$ and $\bar{\gamma}_2$, that is $\gamma_s:[0, \ell(\bar{\gamma}_1)+\ell(\bar{\gamma})+\ell(\bar{\gamma}_2)] \to \M$ satisfying
\begin{align*}
    \gamma_s(t) =
    \begin{cases}
    \bar{\gamma}_1(t) & \quad 0 \leq t \leq \ell(\bar{\gamma}_1) \\
    \bar{\gamma}_1(\ell(\bar{\gamma}_1)) + \bar{\gamma}(t) & \quad
    \ell(\bar{\gamma}_1) < t \leq \ell(\bar{\gamma}_1)+\ell(\bar{\gamma}) \\
    \bar{\gamma}_1(\ell(\bar{\gamma}_1)) + \bar{\gamma}(\ell(\bar{\gamma})) + \bar{\gamma}_2(t) & \quad
    \ell(\bar{\gamma}_1)+\ell(\bar{\gamma}) < t \leq \ell(\bar{\gamma}_1)+\ell(\bar{\gamma})+\ell(\bar{\gamma}_2),
    \end{cases}
\end{align*}
then $\gamma_s$ is continuous and in the closure of  $\Gamma_+(\bar{x}_1,\bar{x}_2)$ by Proposition \Xia{4.1 in the Supplementary Materials}. The yellow curve in Figure \ref{Fig:settings} (a) demonstrates $\gamma_s$.

In Figure \ref{Fig:settings} (a), we use the blue arrows to demonstrate an example of the vector field satisfying Assumption \ref{ass:vect_field}. Generally speaking, this refers to the arrows at the left half plane pointing towards $\bar{x}$ and arrows at the right half plane pointing in the opposite direction. Moreover, the arrows straighten horizontally as they approach the second axis. We summarize the assumptions on the vector field in Assumption \ref{ass:vect_field} (b) and (c).

\begin{assumption}\label{ass:vect_field}
$\ $
\begin{itemize}
    \item[(a)] $v^T \bar{x}_1 < 0$ and $v^T \bar{x}_2 > 0$.
    \item[(b)] For any $x \in \mathcal{M}$, $v_1^TW_{n,h}(x) \geq 0$ and $(v_i^TW(x)) * (v_i^Tx)*(v_1^Tx) \geq 0$ for any $i \geq 2$.
    \item[(c)] Suppose $x$ and $x'$ are in $\mathcal{M}$. If $V_\bot^T x = V_\bot^T x'$ and $|v_1^Tx| \leq |v_1^Tx'|\leq \max\{|v_1^T\bar{x}_1|,|v_1^T\bar{x}_2|\}$, then $|v_i^T W(x)| \leq |v_i^T W(x')|$ for any $i \geq 2$.
\end{itemize}
\end{assumption}

\begin{figure}[th]
\begin{center}
\subfigure[]{
\includegraphics[width=0.25\textwidth]{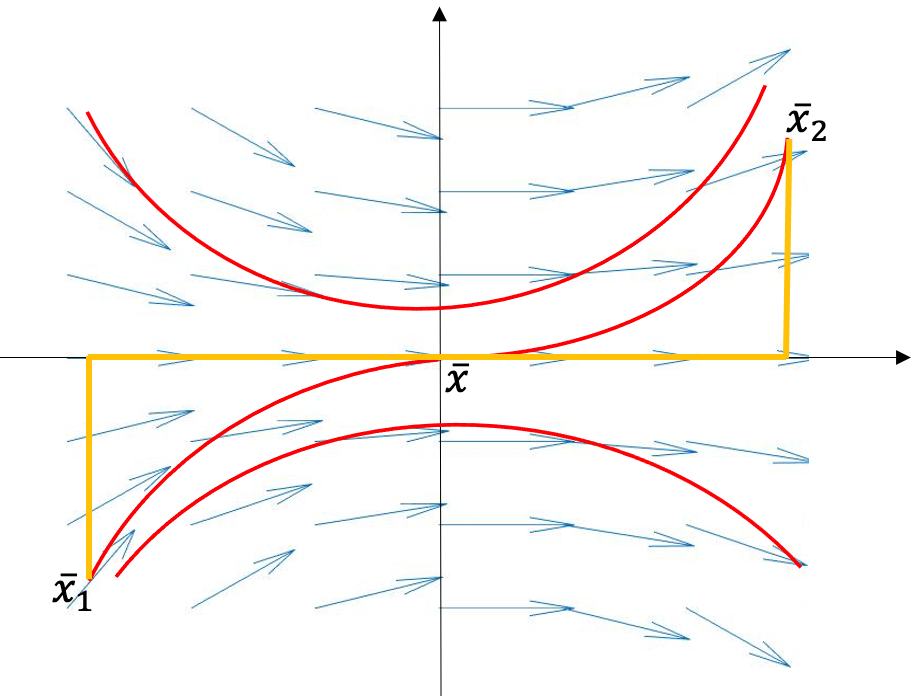}}
\subfigure[]{\includegraphics[width=0.2\textwidth]{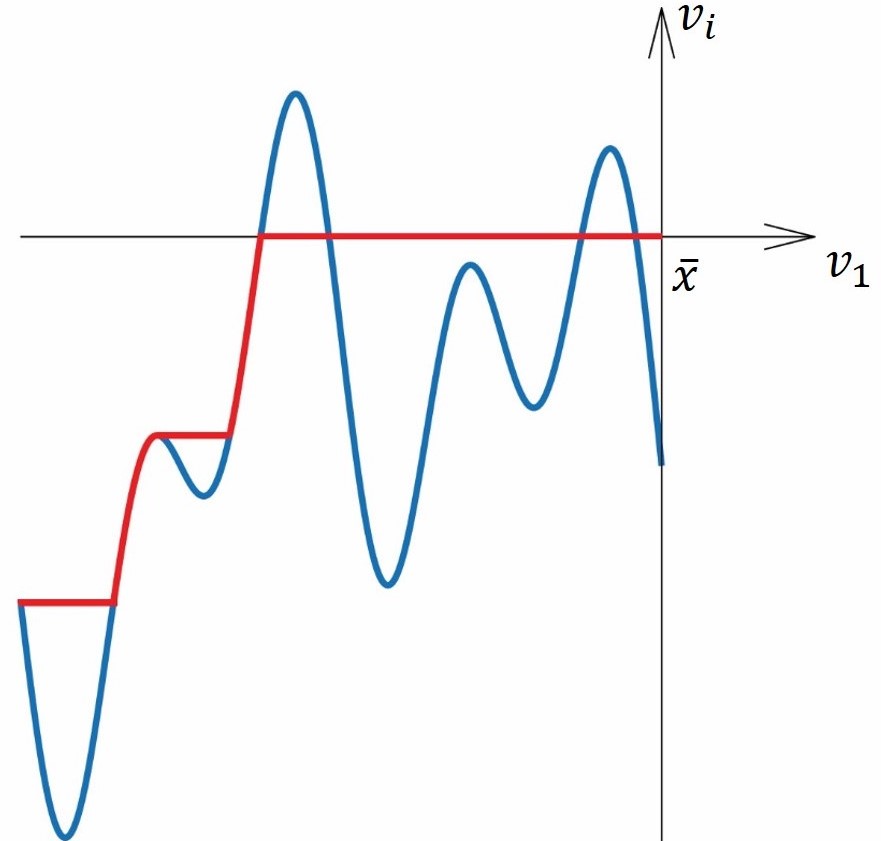}}
\subfigure[]{\includegraphics[width=0.19\textwidth]{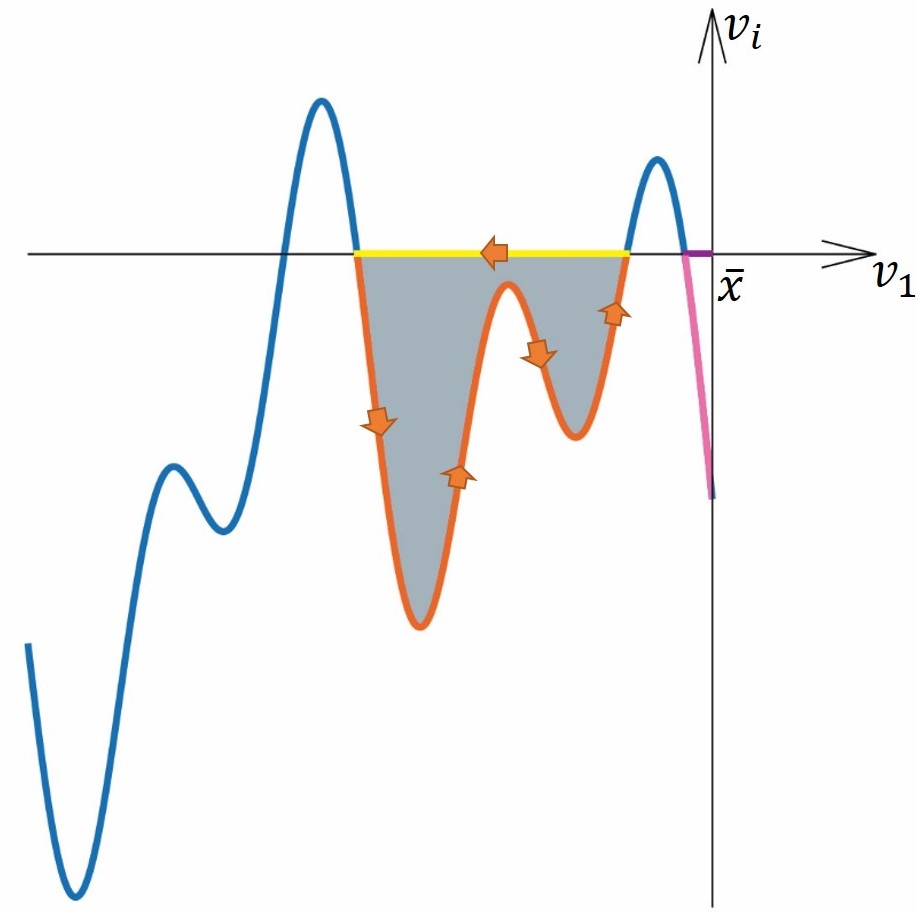}}
\end{center}
\caption{Image (a): The cross sectional area of $\M$ along the first and $i$-th axis. Vector field satisfying Assumption \ref{ass:vect_field} (blue arrows), flows satisfying $\dot{\gamma}(t) \odot W(\gamma(t)) \geq 0$ (red curves) and $\gamma_s$ (yellow curve). Image (b) and (c): Diagram of $\gamma$ (blue curve) and $\gamma_+$ (red curve) in $\M$. The orange and pink curves are segments of $\gamma$ and the yellow and purple curves are segments of $\gamma_+$.}
\label{Fig:settings}
\end{figure}

Assumption \ref{ass:vect_field} is not strict. Figure \ref{Fig:ass_vect_field} illustrates (b) and (c) of Assumption \ref{ass:vect_field} with two data sets, as represented by black points that are concentrated around a ``C''-shaped curve and an ``S''-shaped curve in $\mathbb{R}^2$. The two diagrams in the left-hand panel show the vector fields for the two data sets, both of which satisfy Assumption \ref{ass:vect_field}(b), while the diagrams in the right-hand panel show how $|v_2^TW(x)|$ varies at different points of $x$. Specifically, $|v_2^TW|$ gets larger when the color transitions to yellow, and smaller when the color transitions to blue. One can conclude from the two diagrams in the right panel that the vector field between the two orange lines satisfies Assumption \ref{ass:vect_field}(c).

\begin{figure}[ht]
    \centering
    \includegraphics[width=6in]{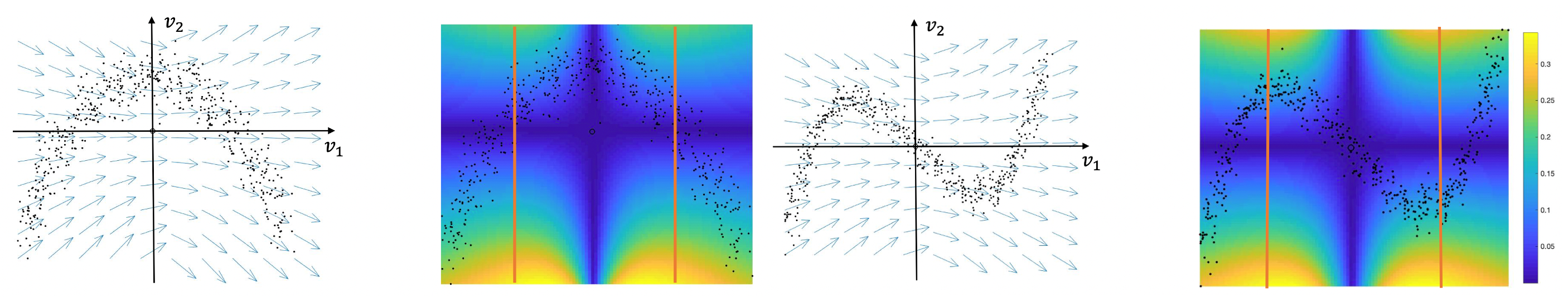}
    \caption{Vector field (left) and $|v_2^TW|$ (right) at different points.}
    \label{Fig:ass_vect_field}
\end{figure}

We can now set out sthe second proposition, which is under the general condition $\gamma(t) = \bar{x}_2$ with $t \leq C\Delta$. This proposition shows that if we restrict $\gamma$ in $\Gamma_+(\bar{x}_1, \bar{x}_2)$, the fixed boundary flow will pass through the usual principal component.

\begin{proposition}\label{PROP:PLUS}
If $\ell(\gamma_s) \leq C\Delta$, then
\begin{align*}
    \mathcal{L}(W, \gamma_s) = \sup_{\gamma \in \Gamma_+(\bar{x}_1,\bar{x}_2)} \mathcal{L}(W,\gamma).
\end{align*}
\end{proposition}


The proof of Proposition \ref{PROP:PLUS} can be found in Appendix D in the Supplementary Materials. Also in Appendix D in the Supplementary Materials,  we further explain
the inequality
\begin{align}\label{Gamma_neg}
\sup_{\gamma \in \Gamma_+(\bar{x}_1, \bar{x}_2)} \ \mathcal{L}(W, \gamma) \quad \geq \quad \sup_{\gamma \in \Gamma(\bar{x}_1, \bar{x}_2)/\Gamma_+(\bar{x}_1, \bar{x}_2)} \ \mathcal{L}(W, \gamma),
\end{align}
Combining the inequality in Proposition \ref{PROP:PLUS} and the inequality (\ref{Gamma_neg}), we conclude that the optimal solution of (\ref{opt_original}) always passes through the usual principal component. The scheme to show the inequality (\ref{Gamma_neg}) is organized as follows. As shown in Figure \ref{Fig:settings}(b) and (c), we construct $\gamma_+$ (the red curve) by any $\gamma$ (the blue curve), and illustrate $\mathcal{L}(W, \gamma_s) \geq \mathcal{L}(W, \gamma)$. In particular, if the dimension of the space is 2, the comparison between $\mathcal{L}(W, \gamma_s) $ and $ \mathcal{L}(W, \gamma)$ can be achieved by calculating the integration over the gray area using Green’s Theorem.

\section{Discussion} \label{SEC:DISS}
The determination of a fixed boundary flow for data points on non-linear manifolds is a very different problem from the case of principal flow. We propose the notion of a fixed boundary flow to define a curve with fixed starting and ending points and a tangent velocity that matches the maximal variation of data in its neighborhood at each point. The local geometry of data variation is represented by the tangent space at the given point, which compels us to use the local vector fields. Based on this choice, we formulate an optimization framework to construct a smooth curve on the manifold, with a tangent vector that always matches the local vector fields. There is no doubt that the solution to the optimization problem, and equivalently, the fixed boundary flow, depends on how a neighborhood is defined at a certain point, just as with principal flow.

The choice of the neighborhood depending on the scale parameter $h$ determines how local or global covariation features are captured by the fixed boundary flow.  Algorithm \ref{RFBF algorithm} provides a way to select a series of decreasing $h^{\sss (k)}$ till $\rho h^{\sss (k)} \leq 4\sqrt{\sigma}$, which obliges us to focus on the global trend of the curve first and the local second. Using this algorithm, we generate a curve represented by $\tilde{\gamma}$.  We discuss below the construction of a ``confidence band'' for the resulting fixed boundary flow $\tilde{\gamma} \in \M$. 
As we define the confidence band for the flow on the manifold, it should be a confidence ellipsoid. Note that the samples in $B_d(x,h)$ roughly lie within an ellipsoid with principal axes of length $h$, $\frac{\sqrt{\lambda_2}}{\sqrt{\lambda_1}}h, \cdots, \frac{\sqrt{\lambda_m}}{\sqrt{\lambda_1}}h$, respectively. 
Thus, we use the formulation of $\{\lambda_i\}_{i=1}^{m}$ to construct the ``confidence band''.  Specifically, 
for any point $x=\tilde{\gamma}(t)$ on the computed fixed boundary flow $\tilde{\gamma}$, we can define an ellipsoid of dimension $(m-1)$ in the intersection of $\mathcal{T}_x \M$ and the normal space at $x$, which could cover most samples in this intersection. By allowing the orthonormal $U(x) \in \mathbb{R}^{d\times m}$ be a basis of $\mathcal{T}_x\M$, the confidence ellipsoid is of dimension $(m-1)$ obeying
\begin{align*}
    \left\{\Pi_{\mathcal{M}}z: (z-\tilde{\gamma}(t))^TV \ {\rm diag} \ (\frac{\lambda_1}{\lambda_2h^2}, \cdots, \frac{\lambda_1}{\lambda_mh^2})V^T(z-\tilde{\gamma}(t)) = 1\right\},
\end{align*}
where $\Pi_{\mathcal{M}}z$ is the projection of $z$ onto $\mathcal{M}$ and $V = \big( U(\tilde{\gamma}(t))U(\tilde{\gamma}(t))^T - \dot{\tilde{\gamma}}(t)\dot{\tilde{\gamma}}(t)^T \big)(e_2, \cdots, e_m)$. Note that $U(x)$ can be estimated with certain theoretical guarantees (see \citet{tyagi2013tangent}). We remark that $\dot{\tilde{\gamma}}(t)$ usually approximates $W(\tilde{\gamma}(t))$, that is, $e_1$. This makes $V$ full column rank, and consequently, the dimension of the ellipsoid is $(m-1)$. If $\dot{\tilde{\gamma}}(t)$ is happened to be orthogonal to $W(\tilde{\gamma}(t))$, the dimension of the ellipsoid would reduce to $(m-2)$. With certain covering ellipsoid conditions for the samples in the neighborhood, one might consider bounding $\dot{\tilde{\gamma}}$ and $\tilde{\gamma}$ under the current setting. Some of the results in \citet{Yao2019principal} will be helpful in this respect. As this is part of our ongoing work, we intend to further investigate it in the future.

\section*{Acknowledgements}
 ZY is grateful for the financial support from his Singapore Ministry of Education (MOE) Tier 1 funding (A-0004809-00-00) and Tier 2 funding (A-0008520-00-00) at the National University of Singapore. ZY thanks Professor Shing-Tung Yau for his comments and discussion and the support from the Center of Mathematical Sciences and Applications at Harvard University.

\section*{Appendix A: Preliminaries}
In this section, we will introduce some preliminaries in Riemannian geometry and review the principal flows. We focus on studying a complete Riemannian manifold 
$\mathcal{M}$ of dimension $m$, equipped with a metric $g$. The smooth Riemannian manifold $\mathcal{M}$ can be isometrically embedded into the Euclidean space $(\mathbb{R}^d,\|\cdot\|)$, $m<d$. Assuming that the embedding is known, there exists a known differentiable function $F: \mathbb{R}^d \to \mathbb{R}^m$, and we have
\begin{eqnarray*}
\mathcal{M}:=\{x\in\mathbb{R}^d: F(x)=0\}.
\end{eqnarray*}
The Riemannian metric $g(\cdot,\cdot)$ on the Riemannian manifold $\mathcal{M}$ is induced by an inner product $\langle \cdot,\cdot\rangle$ defined in the tangent space $T_x\mathcal{M}$ at each point $x\in\mathcal{M}$, and the tangent space $T_x\mathcal{M}$ is denoted by
\begin{eqnarray*}
T_x\mathcal{M}:=\{y\in\mathbb{R}^d: DF y=0\}, ~~x\in \mathcal{M},
\end{eqnarray*}
where $DF$ is the $m\times d$ derivative matrix of $F$ evaluated at $x$. Since the tangent space $T_x\mathcal{M}$ is able to locally approximate the manifold, we define two mappings between the tangent space and the manifold. The exponential map at $x$ takes a tangent vector $v\in T_x\mathcal{M}$ denoted by
\begin{eqnarray*}
\exp_x: T_x\mathcal{M} \to \mathcal{M},
\end{eqnarray*}
and there exists a unique geodesic $\gamma_v$ satisfying $\gamma_v(0)=x$ with initial velocity $\dot{\gamma}_v(0)=v/\|v\|$. Therefore, the exponential map is locally defined by $\exp_x(v)=\gamma_v(\|v\|)$ in the neighborhood of $x$. The inverse of the exponential map, the logarithm map, is denoted by 
\begin{eqnarray*}
\log_x: \mathcal{M} \to  T_x\mathcal{M}.
\end{eqnarray*}

Before we review principal flows, we recall the Fr\'echet mean and tangent space PCA, which are basic elements to construct principal flows. Let $\{x_1,\dots,x_n\}$ be the data points lying on the manifold $\mathcal{M}$, where there exists a connected open set $B\subset\mathcal{M}$, such that $B$ covers $\{x_1,\dots,x_n\}$. 
The Fr\'echet mean $\bar{x}\in B$ is the point that minimizes the sum of square distances under the Riemannian metric
\begin{eqnarray*}
\frac{1}{n}\sum_{i=1}^ng^2(\cdot,x_i).
\end{eqnarray*}
The tangent components $e_1(\bar{x}),\ldots,e_m(\bar{x})$ at $\bar{x}$ form a basis for the tangent space $T_{\bar{x}} \mathcal{M}$, and they are given by the first $r$ eigenvectors of the scale $h$ local tangent covariance matrix 
\begin{eqnarray*}
\Sigma_h(\bar{x})=\frac{1}{\sum_i\kappa_h(x_i,\bar{x})}\sum_{i=1}^n(\log_{\bar{x}}(x_i)\otimes\log_{\bar{x}}(x_i))\kappa_h(x_i,\bar{x}),
\end{eqnarray*} 
where $y\otimes y=yy\trans$, $\kappa_h(x,\bar{x})=K(h^{-1}\|\log_{\bar{x}}x-\bar{x}\|)$ and $h>0$.

\Xia{
When $h = \infty$,  and $\mathcal{M} = \mathbb{R}^d$, the local tangent covariance matrix reduced to be 
\begin{align*}
\Sigma_h(x) & = \frac{1}{n} \sum_{i=1}^n (x_i - \bar{x} + \bar{x} - x)(x_i-\bar{x}+\bar{x}-x)^T \\
& = \Sigma_h(\bar{x}) + (x-\bar{x})(x-\bar{x})^T.
\end{align*}
Noting $W(x)$ is unit length, $\langle W(x), dW(x) \rangle = 0$, we could calculate the derivation of $\lambda_1(x)$ as follows:
\begin{align*}
d \lambda_1(x)  & = d \langle \Sigma_h(x) W(x), W(x) \rangle  \\
& = \langle (d \Sigma_h(x)) W(x), W(x) \rangle +  \langle \Sigma_h(x) d W(x), W(x) \rangle + \langle \Sigma_h(x) W(x), d W(x) \rangle \\
& = \langle (d \Sigma_h(x)) W(x), W(x) \rangle +  \langle d W(x), \Sigma_h^T(x) W(x) \rangle + \langle \Sigma_h(x) W(x), d W(x) \rangle \\
& = \langle (d \Sigma_h(x)) W(x), W(x) \rangle +  \langle d W(x), \lambda_1(x)W(x) \rangle + \langle  \lambda_1(x) W(x), d W(x) \rangle \\
& = \langle (d \Sigma_h(x)) W(x), W(x) \rangle +  2\lambda_1(x) \langle  W(x), d W(x) \rangle \\
& = \langle (d \Sigma_h(x)) W(x), W(x) \rangle = \langle (d \Sigma_h(x)), W(x) W(x)^T \rangle \\
& = \langle d \big( (x-\bar{x})(x-\bar{x})^T \big) , W(x) W(x)^T \rangle \\
& = 2\langle W(x)W(x)^T(x-\bar{x}), dx \rangle.
\end{align*}
}

\section*{Appendix B: Proof of Theorem 3.1}

In subsequent proof, we need a special case of  Theorem 8 by \citet{yaoxia2019}, where the manifold degenerates into a curve, that is, the dimension of the manifold is $1$.  We state this special case of Theorem 8 below, where we use $h$ instead of $r'$ as the scale, to ensure the same as the symbol in this paper.
\begin{thm}[Slight deformation of Theorem 8 by \citet{yaoxia2019}]\label{thm:thm_yaoxia}
Let $z$ be a point off a curve $\gamma$, $z^*$ be the projection of $z$ onto $\gamma$, $d(z,\gamma^\ast)\leq h$. We have
\[
\| \Pi_z - \Pi_{z^*}^*\|_F \leq \frac{C}{h^2}\frac{1}{|I_{z,h}|}\sum_{i \in I_{z,h}}\left(  \|\xi_i\|^4 + \|\xi_i\|^3 + \|\xi_i\|^2 + h\|\xi_i\| \right) + C\left(h+\frac{\|z-z^*\|}{h} + \frac{\|z-z^*\|^2}{h^2} \right) ,
\]
where $\Pi_{z^*}^*$ denotes the orthogonal projection onto the normal space of $z^*$ and $\Pi_z = v_\bot v_\bot^T$. Here $V_\bot$ is the orthogonal component of $v$ and $v$ is the first   eigenvector of $\Sigma_h(z)$.
\end{thm}
To bound the summation of some power of $\|\xi_i\|$ above, we need Proposition 2.3 by \citet{yaoxia2019} as follows.
\begin{proposition}[Proposition 2.3 by \citet{yaoxia2019}]\label{prop:prop_yaoxia}
Suppose $\xi \sim N(0, \sigma^2 I_d)$; then we have, for any positive integer $k$:
\begin{itemize}
    \item[(1)] $\mathbb{E}(\|\xi\|_2^k) = C_1\sigma^k$
    \item[(2)] ${\rm Var}(\|\xi\|_2^k) = C_2 \sigma^{2k} $
    \item[(3)] $\mathbb{E}\big(\|\xi\|_2^k- \mathbb{E}(\|\xi\|_2^k) \big)^3 =C_3\sigma^{3k}$
    \item[(4)] $\|\xi_i\|_2^k$ and $\|\xi_j\|_2^k$ are independent if $\xi_i$ and $\xi_j$ are independent,
\end{itemize}
where $C_1$, $C_2$, and $C_3$ are three constants depending on $d$ and $k$.
\end{proposition}

Based on the above proposition, we obtain the upper bound of the summation of each $\| \xi_i\|^k $ for points lying in a tube surrounding $\gamma^\ast$.

\begin{proposition}\label{prop:sum_xi}
For a given $\delta$, there exists $C_n$ such that if $n \geq C_n \sqrt{\sigma} $, then
\begin{align}
\frac{1}{|\I(x,h)|}\sum_{i \in \I(x,h)} \| \xi_i\|^k \leq C \sigma^k
\end{align}
holds with probability $1-\delta$ for any $(x,h)$ satisfying $h>4\sqrt{\sigma}$, $d(x,\gamma^\ast) \leq \sqrt{\sigma}$, $\|x-\gamma^\ast(0)\|>h/2$ and $\|x-\gamma^\ast(r^*)\| > h/2$.
\end{proposition}

\begin{proof}
Noticing $\{\xi_i\}$ are i.i.d. samples drawn from Gaussian distribution, we can obtain the expectation $\mu_k = C_1\sigma^k$, variance $\sigma_k^2 = C_2\sigma^{2k}$ and the third moment $\rho_k = C_3\sigma^{3k}$ of $\|\xi_i\|^k$ according to Proposition \ref{prop:prop_yaoxia}. By Berry-Esseen Theorem, the cumulative distribution of $\big(\sum_{i \in \I(x,h)} \|\xi_i\|^k - \mu_k |\I(x,h)| \big) / \big( \sigma_k \sqrt{|\I(x,h)|} \big)$ denoted by $F$ satisfies
\[
|F(t) - \Phi(t)| \leq C\rho_k / \sigma_k^3 \leq  C' / \sqrt{|\I(x,h)|},
\]
where $\Phi$ is the cumulative distribution function of standard normal distribution. So, there exists $C$ depending on $d$, $k$ and $\delta$ such that
\[
\frac{1}{|\I(x,h)|}\sum_{i \in \I(x,h)} \|\xi_i\|^k \leq C\sigma^k,
\]
with probability at least $1-\delta/3-C'/\sqrt{|\I(x,h)|}$.

To estimate $|\I(x,h)|$, we calculate the probability of $i \in \I(x,h)$ based on $h > 4\sqrt{\sigma}$,
\begin{align*}
\mathbb{P}(i \in \I(x,h)) & \geq  \mathbb{P}(\|x_i-x\| \leq h) \geq \mathbb{P}(\tilde{x}_i \in \gamma^\ast \cap B_d(x,h/2))\mathbb{P}(\|\xi_i\|\leq h-h/2) \\
 & = \frac{\ell(\gamma^\ast \cap B_d(x,h/2))}{\ell(\gamma^\ast)}\mathbb{P}(\|\xi_i\|\leq 2\sqrt{\sigma}).
\end{align*}
Letting $x^* = \gamma^\ast(t^*)$ be the projection of $x$ onto $\gamma^\ast$, then $x^* \in B_d(x, h/2)$ since $\|x-x^*\| = d(x,\gamma^\ast) \leq \sigma < h/4$. Since $\|x-\gamma^\ast(0)\| > h/2$ and $\|x-\gamma^\ast(r^*)\| > h/2$, there exists $0<t_1<t^*<t_2<r^*$ such that $x_1 = \gamma^\ast(t_1)$ and $x_2=\gamma^\ast(t_2)$ satisfy $\|x-x_1\| = \|x-x_2\| = h/2$. Hence, $\ell(\gamma^\ast \cap B_d(x,h/2)) \geq \|x_1-x^*\| + \|x^*-x_2\| \geq \big(\|x_1-x\|-\|x-x^*\| \big)+ \big(\|x-x_2\|-\|x-x^*\|\big) \geq h/2$. Since each entry of $\xi_i$ obeys Gaussian distribution, $\|\xi_i\|^2/\sigma^2$ obeys Chi-squared distribution. According to the cdf of Chi-squared distribution, we could obtain $\mathbb{P}(\|\xi_i\|\leq 2\sqrt{\sigma}) = O(1)$, and thereby $\mathbb{P}(i \in \I(x,h)) \geq \frac{h/2}{r^*} \cdot O(1) \geq ch = 2c\sqrt{\sigma}$.
Thus, whether $i \in \I(x,h)$ or not can be treated as a Bernoulli distribution with expectation no less than $2c\sqrt{\sigma}$. Applying Berry-Esseen theorem to the $n$ Bernoulli trials, there exists $c' < 1$ such that $|\I(x,h)| \geq c'n\sqrt{\sigma}$ with probability $1-C''/\sqrt{n}$, which implies,
\begin{align*}
\mathbb{P}(\frac{1}{|\I(x,h)|}\sum_{i \in \I(x,h)} \|\xi_i\|^k \leq C\sigma^k)
&\geq 1-\delta/3-C'/\sqrt{|\I(x,h)|} \\
& \geq \big(1-\delta/3-C'/(c'n\sqrt{\sigma}) \big) \mathbb{P}(|\I(x,h)| \geq c'n\sqrt{\sigma}) \\
& \geq \big(1-\delta/3-C'/(c'n\sqrt{\sigma}) \big)(1-C''/\sqrt{n}).
\end{align*}
Setting
\[
n \geq n_0 := \max \{ \frac{9{C''}^2}{\delta^2} , \frac{C'}{c'\sqrt{\sigma}} \frac{1}{1-\frac{\delta}{3} - \frac{1-\delta}{1-\frac{\delta}{3}}} \},
\]
one could verify $1-\delta/3-C'/(c'n\sqrt{\sigma}) \geq (1-\delta)/(1-\delta/3)$ and $ (1-C''/\sqrt{n} \geq (1-\delta/3)$, which implies $\mathbb{P}(\frac{1}{|\I(x,h)|}\sum_{i \in \I(x,h)} \|\xi_i\|^k \leq C\sigma^k) \geq 1-\delta$. Hence, we could take
\[
C_n = \max \{ \frac{9{C''}^2}{\delta^2} , \frac{C'}{c'(1-\frac{\delta}{3} - \frac{1-\delta}{1-\frac{\delta}{3}})} \}
\geq \max \{ \frac{9{C''}^2 \sqrt{\sigma}}{\delta^2} , \frac{C'}{c'(1-\frac{\delta}{3} - \frac{1-\delta}{1-\frac{\delta}{3}})} \}
\]
to complete this proof.
\end{proof}

\begin{proof}[Proof of Theorem 3.1]
\RVSD{
For any $t \in T$, plugging $z = \gamma^\ast(t)$ into Theorem \ref{thm:thm_yaoxia}, we have
\begin{align*}
\| \Pi_z - \Pi_{z^*}^*\|_F & \leq \frac{C}{h^2}\frac{1}{|I_{z,h}|}\sum_{i \in I_{z,h}}\left(  \|\xi_i\|^4 + \|\xi_i\|^3 + \|\xi_i\|^2 + h\|\xi_i\| \right) + Ch \\
& \leq \frac{C}{h^2}\frac{1}{|I_{z,h}|}\sum_{i \in I_{z,h}}\left(  \sigma^4 + \sigma^3 + \sigma^2 + h\sigma \right) + Ch \leq Ch,
\end{align*}
where the second inequality holds by Proposition \ref{prop:sum_xi} with probability $1-\delta$, and the last inequality holds since $h>4\sqrt{\sigma}$.
}

\RVSD{
Let $u = \dot{\gamma}^*(t)$, the tangent vector of $\gamma^\ast$ at $\gamma^\ast(t)$, and $v = W(\gamma^\ast(t))$, the first eigenvector of $\Sigma_h(\gamma^\ast(t))$. By the definition of $\Pi_z$ and $\Pi_{z^*}^*$, we have $\Pi_z = v_\bot v_\bot^T$ and $\Pi_{z^*}^* = u_\bot u_\bot^T$. Hence $\| \Pi_z - \Pi_{z^*}^*\|_F = \| v_\bot v_\bot^T - u_\bot u_\bot^T \| = \|uu^T - vv^T\| \leq Ch$. 
Noting
\begin{align*}
 C^2h^2 \geq \|uu^T - vv^T\|^2 = 2 - 2\langle u, v \rangle^2 = 2(1+\langle u, v \rangle)(1-\langle u, v \rangle) \geq 2(1-\langle u, v \rangle),
\end{align*}
we have $1-\langle u, v \rangle \leq \frac{C^2}{2}h^2$, that is, $\langle u, v \rangle = \langle \dot{\gamma}^*(t), W(\gamma^\ast(t)) \rangle \geq 1 - \frac{C^2}{2}h^2$,
which completes the proof.}
\end{proof}

\section*{Appendix C: Proof of Theorem 3.2 - Theorem 3.4}

\begin{lemma} \label{lma:bound_z}
Suppose 
$h > 4\sqrt{\sigma}$, $\|x-\gamma^\ast(0)\| > h/2$, $\|x-\gamma^\ast(r^*)\| > h/2$ and $x \in X$. For any given $\delta$, there exists $C$ independent on $x$ and $h$ such that
$d(\frac{1}{|\I(x,h)|} \sum_{i \in \I(x,h)} x_i ,  \gamma^\ast) \leq C h^2$
with probability $1-\delta$.
\end{lemma}

\begin{proof}
In this proof, we simplify $\frac{1}{|\I(x,h)|} \sum_{i \in \I(x,h)} x_i$ to be $z$ for convenience. For $i \in \I(x, h)$,
we use $x_i^*$ to represent the projection of $x_i$ onto $\gamma^\ast$, and similarly, use $z^*$ to represent the projection of $z$ onto $\gamma^\ast$. Denoting the tangent space of $\gamma^\ast$ at $z^*$ to be $T_{z^*} \gamma^\ast$, we have
\begin{align*}
d(z, \gamma^\ast) = d(z, T_{z^*}\gamma^\ast) & = \|z - z^*\| \\
& \leq \|z - {x_i}^*\| \leq \|z - x_i\| + \|x_i - {x_i}^*\| < 2.5 h
\end{align*}
where $\|x_i - {x_i}^*\|\leq \sqrt{\sigma} < h/2$ by \RVSD{(3.3) in the main manuscript} and
\[
\|z - x_i\| \leq \frac{1}{|\I(x, h)|} \sum_{i' \in \I(x, h)} \|x_{i'}-x_i\| \leq 2h.
\]

To get a tight bound on $d(z, \gamma^\ast)$, we denote $\Pi_{z^*}^*$ to be the orthogonal projection onto the normal space of $\gamma^\ast$ at $z^*$ and obtain
\begin{align*}
d(z, T_{z^*}\gamma^\ast) & = \| \Pi_{z^*}^* \big( \frac{1}{|\I(x,h)|} \sum_{i \in \I(x,h)} x_i - z^* \big) \|
\\
& \leq \frac{1}{|\I(x,h)|} \|\Pi_{z^*}^*\| \sum_{i \in \I(x,h)} \|x_i -{x_i}^*\| + \frac{1}{|\I(x,h)|} \sum_{i \in \I(x,h)} \|\Pi_{z^*}^*(x_i^* - z^*)\| \\
& \leq \frac{1}{|\I(x,h)|} \sum_{i \in \I(x,h)} \|\xi_i\| + \frac{1}{|\I(x,h)|} \sum_{i \in \I(x,h)} \frac{\|x_i^* - z^*\|^2}{\tau},
\end{align*}
where the second term of the last inequality follows Theorem 4.18 by \citet{Federer1959}. Noting
\[
\|x_i^*-z^*\| \leq \|x_i^*-z\|+\|z-z^*\| < 2.5h+2.5h = 5h,
\]
and $\frac{1}{|\I(x,h)|} \sum_{i \in \I(x,h)} \|\xi_i\| \leq C\sigma$ with high probability by Proposition \ref{prop:sum_xi}, we could bound
\[
d(z, \gamma^\ast) = d(z, T_{z^*} \gamma^\ast) \leq C \sigma + \frac{25}{\tau} h^2 = O(h^2),
\]
which completes the proof.
\end{proof}

\begin{lemma}\label{lma:first_eigvec}
Let $z$ be a point off $\gamma^\ast$, $z^*$ be the projection of $z$ onto $\gamma^\ast$, and $v$ be the tangent vector of $\gamma^\ast$ at $z^*$. If $d(z,\gamma^\ast) \leq C_1h^2$ and $\|z-\gamma^\ast(t)\| > h/2$ for $t=0,1$, then for any given $\delta$, there exists $C$ such that $\|vv^T-e_1(z) e_1(z)^T\|\leq Ch$ with probability $1-\delta$.
\end{lemma}
\begin{proof}
\RVSD{
Since $d(z,\gamma^\ast) \leq C_1h^2$, $\|z-z^*\| = d(z,\gamma^\ast) \leq C_1h^2$. Plugging $z$ into Theorem \ref{thm:thm_yaoxia}, we have
\begin{align*}
\| \Pi_z - \Pi_{z^*}^*\| & \leq \frac{C}{h^2}\frac{1}{|\I(z,h)|}\sum_{i \in \I(z,h)}\left(  \|\xi_i\|^4 + \|\xi_i\|^3 + \|\xi_i\|^2 + h\|\xi_i\| \right) + C(h + C_1 h + C_1^2 h^2) \\
& \leq \frac{C}{h^2}\frac{1}{|\I(z,h)|}\sum_{i \in \I(z,h)}\left(  \sigma^4 + \sigma^3 + \sigma^2 + h\sigma \right) + Ch + Ch^2\leq Ch,
\end{align*}
where the second inequality holds by Proposition \ref{prop:sum_xi} with probability $1-\delta$, and the last inequality holds since $4\sqrt{\sigma}< h \leq 1$.
}

By the definition of $\Pi_z$ and $\Pi_{z^*}^*$, we have $\Pi_z = I - e_1(z) e_1(z)^T$ and $\Pi_{z^*}^* = I-vv^T$. Hence $\| \Pi_z - \Pi_{z^*}^*\|_F =  \| vv^T - e_1(z) e_1(z)^T\| \leq Ch$. 
\end{proof}

\begin{proposition} \label{prop:uv}
Suppose $u$ and $v$ are normal vectors, then $\|uu^T-vv^T\| = \sqrt{2} \|(I-vv^T)u\|$.
\end{proposition}
\begin{proof}
To prove this proposition, we calculate $\|uu^T-vv^T\|^2$ and $\|(I-vv^T)u\|^2$ respectively as per
$
\|uu^T-vv^T\|^2 = \langle uu^T, uu^T \rangle + \langle vv^T, vv^T \rangle - 2 \langle uu^T, vv^T \rangle = 2 - 2 \langle uu^T, vv^T \rangle,
$
and
$
\|(I-vv^T)u\|^2 = \langle (I-vv^T)u, (I-vv^T)u \rangle = \langle I, uu^T \rangle - \langle uu^T, vv^T \rangle = 1 - \langle uu^T, vv^T \rangle,
$
which complete the proof.
\end{proof}

\begin{proposition}\label{prop:dist_tangent_gamma}
  If $a \in \gamma^\ast$ and $b \in T_a \gamma^*$, then $d(b, \gamma^\ast) \leq \frac{1}{2\tau}\|a-b\|^2$.
\end{proposition}
\begin{proof}
  Let $a = \gamma^\ast(t_0)$ and $\Delta t = \langle b-a, {\gamma^\ast}^{\prime}(t_0) \rangle$. By Taylor's expansion,
  \begin{align*}
  \gamma^\ast(t_0 + \Delta t) & = \gamma^\ast(t_0) + \dot{\gamma}^\ast(t_0) \Delta + \frac{\ddot{\gamma}^{\ast}(t')}{2} \Delta t^2 \\
  & = \gamma^\ast(t_0) + \dot{\gamma}^\ast(t_0){\dot{\gamma}^\ast(t_0)}^T(b-a) + \frac{\ddot{\gamma}^\ast(t')}{2} \Delta t^2 \\
  & = a + (b-a) + \frac{\ddot{\gamma}^\ast(t')}{2} \Delta t^2 = b + \frac{\ddot{\gamma}^\ast(t')}{2} \Delta t^2,
  \end{align*}
  where $t' \in [t_0, t_0 + \Delta t]$ and $\dot{\gamma}^\ast(t_0){\dot{\gamma}^\ast(t_0)}^T(b-a) = b-a$ since $b \in T_{a^*}\gamma^\ast$ and $\|\dot{\gamma}^\ast\|=1$. Hence, we could obtain $d(b, \gamma^\ast) \leq \|b - \gamma^\ast(t_0+\Delta t)\| \leq \frac{1}{2\tau} \|b-a\|^2$.
\end{proof}

\begin{proposition}\label{prop:line_gamma}
Suppose $x = z_i^{{\sss (k)}} + \alpha e_1(z_i^{{\sss (k)}},h^{{\sss (k)}})$, $d(z_i^{{\sss (k)}},\gamma^\ast) \leq C_1{h^{{\sss (k)}}}^2$,  $\|z_i^{{\sss (k)}} - \gamma^\ast(0)\| > h^{{\sss (k)}}/2$ and $\|z_i^{{\sss (k)}} - \gamma^\ast(r^*)\| > h^{{\sss (k)}}/2$. 
If $|\alpha| \leq C_2 h^{{\sss (k)}}$, then $d(x,\gamma^\ast) \leq C{h^{{\sss (k)}}}^2$.
\end{proposition}

\begin{proof}
This proof is conducted in two steps: First, we show that there is $\bar{x} \in T_{{z_i^{{\sss (k)}}}^*} \gamma^\ast$ such that $d(x, T_{{z_i^{{\sss (k)}}}^*}\gamma^\ast) = \|x-\bar{x}\| \leq C{h^{{\sss (k)}}}^2$ and then we bound $d(\bar{x}, \gamma^\ast) \leq C{h^{{\sss (k)}}}^2$. These two claims conclude $d(x, \gamma^\ast) \leq \|x-\bar{x}\|+ d(\bar{x},\gamma^\ast) \leq C{h^{{\sss (k)}}}^2$.

To begin with,
\begin{align*}
d(x, T_{{z_i^{{\sss (k)}}}^*}\gamma^\ast) & = \|\Pi^*_{{z_i^{{\sss (k)}}}^*}(x -{z_i^{{\sss (k)}}}^*)\| \leq \|z_i^{{\sss (k)}}-{z_i^{{\sss (k)}}}^*\| + \|\Pi^*_{{z_i^{{\sss (k)}}}^*}(x - z_i^{{\sss (k)}})\| \\
& = \|z_i^{{\sss (k)}}-{z_i^{{\sss (k)}}}^*\| + |\alpha|\|(I-vv^T)e_1(z_i^{{\sss (k)}})\| \\
&= d(z_i^{{\sss (k)}},\gamma^\ast) + \frac{|\alpha|}{\sqrt{2}}\|vv^T-e_1(z_i^{{\sss (k)}}) e_1(z_i^{{\sss (k)}})^T\| \\
& \leq C_1{h^{{\sss (k)}}}^2 + \frac{C_2h^{{\sss (k)}}}{\sqrt{2}} Ch^{{\sss (k)}} \leq C{h^{{\sss (k)}}}^2,
\end{align*}
where $\|vv^T-e_1(z_i^{{\sss (k)}}) e_1(z_i^{{\sss (k)}})^T\| \leq Ch^{{\sss (k)}}$ by Lemma \ref{lma:first_eigvec}.
Denote the projection of $x$ onto $T_{{z_i^{{\sss (k)}}}^*} \gamma^\ast$ to be $\bar{x}$, then 
\begin{align*}
\|\bar{x}-{z_i^{{\sss (k)}}}^*\| 
& \leq \|\bar{x}-x\|+\|x-z_i^{{\sss (k)}}\|+\|z_i^{{\sss (k)}}-{z_i^{{\sss (k)}}}^*\|  \\ 
& \leq d(x, T_{{z_i^{{\sss (k)}}}^*}\gamma^\ast)+\|x-z_i^{{\sss (k)}}\|+\|z_i^{{\sss (k)}}-{z_i^{{\sss (k)}}}^*\| \leq C{h^{{\sss (k)}}}^2+C_2h^{{\sss (k)}}+C_1{h^{{\sss (k)}}}^2\leq Ch^{{\sss (k)}}.
\end{align*}
Taking $a = {z_i^{{\sss (k)}}}^*$ and $b = \bar{x}$ in Proposition \ref{prop:dist_tangent_gamma}, we have $d(\bar{x}, \gamma^\ast) \leq C{h^{{\sss (k)}}}^2$,
which completes the proof.
\end{proof}

We impose constrains to $\tilde{\gamma}^{{\sss (k)}}(t_i)$ rather than $z_i^{{\sss (k)}}$ and $\alpha$, and obtain the following Lemma. We use $s(a,b)$ to denote the segment between $a$ and $b$ hereafter, that is, $s(a,b) = \{\beta a+(1-\beta)b : \beta \in[0,1]\}$.

\RVSD{
\begin{lemma}\label{lma:iteration_k}
Suppose the discrete curve at the $k$-th iteration satisfies the following conditions:
\begin{itemize}
    \item[(a)] 
    $d(\tilde{\gamma}^{{\sss (k)}}(t_{i}),\gamma^\ast) \leq C_1 h^{{\sss (k)}}$,
    \item[(b)] 
    $\|\tilde{\gamma}^{{\sss (k)}}(t_{i+1}) - \tilde{\gamma}^{{\sss (k)}}(t_i)\| \leq C_2 h^{{\sss (k)}}$ for any $i<2N^{{\sss (k)}}$, 
    \item[(c)] $\|\tilde{\gamma}^{{\sss (k)}}(t_{2j+1})-\gamma^\ast(0)\|\geq (2C_1+3.5)h^{{\sss (k)}}$ and $\|\tilde{\gamma}^{{\sss (k)}}(t_{2j+1})-\gamma^\ast(r^*)\|\geq (2C_1+3.5)h^{{\sss (k)}}$ for any $j = 0, 1, \cdots, N^{{\sss (k)}}-1$.
\end{itemize}
For any given $\delta$, there exists $C$ such that the segments $s\big( \gamma_{{\rm proj},2j+1}^{{\sss (k)}}(t_{2j}),\gamma_{{\rm proj},2j+1}^{{\sss (k)}}(t_{2j+2}) \big)$
for any $j = 0, 1, \cdots, N^{{\sss (k)}}-1$ are within Hausdorff distance $C{h^{{\sss (k)}}}^2$ to $\gamma^\ast$ with probability $1-\delta$. 
\end{lemma}
}

\begin{proof}
\RVSD{
Since $d(\tilde{\gamma}^{{\sss (k)}}(t_{2j+1}),\gamma^\ast) \leq C_1h^{{\sss (k)}}$ for each $j = 0, 1, \cdots, N^{{\sss (k)}}-1$, $B_d(\tilde{\gamma}^{{\sss (k)}}(t_{2j+1}), 2C_1 h^{{\sss (k)}}) \cap X $ is non-empty according to Assumption 3.2 in the main manuscript. As the closest point to $\tilde{\gamma}^{{\sss (k)}}(t_{2j+1})$,  
$x^{{\sss (k)}}_{\rm proj}(t_{2j+1})$ exists and $\|\tilde{\gamma}^{{\sss (k)}}(t_{2j+1}) - x^{{\sss (k)}}_{\rm proj}(t_{2j+1}) \| \leq 2C_1 h^{{\sss (k)}}$. 
Moveover, $\|\bar{x}^{{\sss (k)}}_{\rm proj}(t_{2j+1}) - x^{{\sss (k)}}_{\rm proj}(t_{2j+1})\| \leq h^{\sss (k)}$ since $\bar{x}^{{\sss (k)}}_{\rm proj}(t_{2j+1})$ is the mean of $B_d(x^{{\sss (k)}}_{\rm proj}(t_{2j+1}) , h^{\sss (k)})$. Noting $x^{{\sss (k)}}_{\rm proj}(t_{2j+1}) \in B_d(\bar{x}^{{\sss (k)}}_{\rm proj}(t_{2j+1}) , h^{\sss (k)})$, the nearest sample in the data cloud to $\bar{x}^{{\sss (k)}}_{\rm proj}(t_{2j+1})$, which is the final projection of $\tilde{\gamma}^{{\sss (k)}}(t_{2j+1})$ to the data cloud denoted by $\gamma^{{\sss (k)}}_{\rm proj}(t_{2j+1})$,  satisfies 
\begin{align*}
\|\gamma^{{\sss (k)}}_{\rm proj}(t_{2j+1})-\tilde{\gamma}^{{\sss (k)}}(t_{2j+1})\| 
& \leq \|x^{{\sss (k)}}_{\rm proj}(t_{2j+1})-\tilde{\gamma}^{{\sss (k)}}(t_{2j+1})\| + 
\|\bar{x}^{{\sss (k)}}_{\rm proj}(t_{2j+1})-x^{{\sss (k)}}_{\rm proj}(t_{2j+1})\| \\
& + \|\bar{x}^{{\sss (k)}}_{\rm proj}(t_{2j+1})-\gamma^{{\sss (k)}}_{\rm proj}(t_{2j+1})\| \\
& \leq 2C_1 h^{{\sss (k)}} + h^{{\sss (k)}} + h^{{\sss (k)}} = (2C_1+2)h^{{\sss (k)}}.
\end{align*}
Hence, the distance between $\gamma^{{\sss (k)}}_{\rm proj}(t_{2j+1})$ and the ends of $\gamma^*$ can be bounded below as
\begin{align*}
\|\gamma^{{\sss (k)}}_{\rm proj}(t_{2j+1}) - \gamma^\ast(t)\| 
& \geq \| \tilde{\gamma}^{{\sss (k)}}(t_{2j+1})-\gamma^\ast(t)\| - \|\tilde{\gamma}^{{\sss (k)}}(t_{2j+1}) - \gamma^{{\sss (k)}}_{\rm proj}(t_{2j+1})\| \\
& \geq (2C_1+3.5)h^{{\sss (k)}} - (2C_1+2)h^{{\sss (k)}} = 1.5h^{{\sss (k)}} >  0.5h^{{\sss (k)}},    \quad {\rm for} \ t= 0,r^*.
\end{align*}
}
Plugging $x = \gamma^{{\sss (k)}}_{\rm proj}(t_{2j+1})$ into Lemma \ref{lma:bound_z}, we conclude $d(z_{2j+1}^{{\sss (k)}},\gamma^\ast) = O({h^{{\sss (k)}}}^2)$ with probability $1-\delta$. Moreover,
\begin{align*}
\|z_{2j+1}^{{\sss (k)}} - \gamma^\ast(t)\|
 \geq \|\gamma^{{\sss (k)}}_{\rm proj}(t_{2j+1}) - \gamma^\ast(t)\| - \| \gamma^{{\sss (k)}}_{\rm proj}(t_{2j+1}) - z_{2j+1}^{{\sss (k)}}\| \geq 1.5h^{{\sss (k)}} - h^{{\sss (k)}} = 0.5h^{{\sss (k)}},
\end{align*}
for $t=0,r^*$.
Hence, the conditions on $z_{2j+1}^{{\sss (k)}}$ in Proposition \ref{prop:line_gamma} are satisfied for $j=0,1,\cdots,N^{{\sss (k)}}-1$ with probability $1-\delta$. 
We utilize Proposition \ref{prop:line_gamma} to prove the conclusion for the segment between $\gamma_{{\rm proj},2j+1}^{{\sss (k)}}(t_{2j})$ and $\gamma_{{\rm proj},2j+1}^{{\sss (k)}}(t_{2j+2})$. Since $\gamma_{{\rm proj},2j+1}^{{\sss (k)}}(t_{2j})$ is the projection of $\tilde{\gamma}^{{\sss (k)}}(t_{2j})$ onto the line passing $z_{2j+1}^{{\sss (k)}}$ along direction $e_1(z_{2j+1}^{{\sss (k)}}, h^{{\sss (k)}})$, $\gamma_{{\rm proj},2j+1}^{{\sss (k)}}(t_{2j})$ can be written as $\gamma_{{\rm proj},2j+1}^{{\sss (k)}}(t_{2j}) = z_{2j+1}^{{\sss (k)}} + \alpha_1 e_1(z_{2j+1}^{{\sss (k)}},h^{{\sss (k)}})$ with $|\alpha_1| = \|\gamma_{{\rm proj},2j+1}^{{\sss (k)}}(t_{2j})-z_{2j+1}^{{\sss (k)}}\| \leq \|\tilde{\gamma}^{{\sss (k)}}(t_{2j})-z_{2j+1}^{{\sss (k)}}\|$. Thus,
\begin{align}
\begin{split}\label{bound:tildegamma_z1}
|\alpha_1| & \leq \|\tilde{\gamma}^{{\sss (k)}}(t_{2j})-z_{2j+1}^{{\sss (k)}}\| \\
& \leq \|\tilde{\gamma}^{{\sss (k)}}(t_{2j}) - \tilde{\gamma}^{{\sss (k)}}(t_{2j+1})\| + \|\tilde{\gamma}^{{\sss (k)}}(t_{2j+1}) - \gamma^{{\sss (k)}}_{\rm proj}(t_{2j+1}) \| + \|\gamma^{{\sss (k)}}_{\rm proj}(t_{2j+1}) - z_{2j+1}^{{\sss (k)}}\|\\
& \leq (C_2 + 2C_1 + 1)h^{{\sss (k)}}.
\end{split}
\end{align}
Analogically, $\gamma_{{\rm proj},2j+1}^{{\sss (k)}}(t_{2j+2})$ can be written as $\gamma_{{\rm proj},2j+1}^{{\sss (k)}}(t_{2j+2})= z_{2j+1}^{{\sss (k)}} + \alpha_2 e_1(z_{2j+1}^{{\sss (k)}},h^{{\sss (k)}})$ with
\begin{align}\label{bound:tildegamma_z2}
|\alpha_2| \leq \| \tilde{\gamma}^{{\sss (k)}}(t_{2j+2})-z_{2j+1}^{{\sss (k)}} \| \leq (C_2 + C_1 + 1)h^{{\sss (k)}}.
\end{align}
Any point on the segment between $\gamma_{{\rm proj},2j+1}^{{\sss (k)}}(t_{2j})$ and $\gamma_{{\rm proj},2j+1}^{{\sss (k)}}(t_{2j+2})$ is a convex combination of $\gamma_{{\rm proj},2j+1}^{{\sss (k)}}(t_{2j})$ and $\gamma_{{\rm proj},2j+1}^{{\sss (k)}}(t_{2j+2})$, that is, such a point equals
\[
\beta \gamma_{{\rm proj},2j+1}^{{\sss (k)}}(t_{2j}) + (1-\beta) \gamma_{{\rm proj},2j+1}^{{\sss (k)}}(t_{2j+2}) = z_{2j+1}^{{\sss (k)}} + (\beta \alpha_1 + (1-\beta) \alpha_2) e_1(z_{2j+1}^{{\sss (k)}},h^{{\sss (k)}}),
\]
with a certain $\beta \in [0,1]$. Also, we could verify that $|\beta \alpha_1 + (1-\beta) \alpha_2| \leq \beta |\alpha_1| + (1-\beta)|\alpha_2| \leq (C_2 + 2C_1 + 1)h^{{\sss (k)}}$. Hence, any point on the segment between $\gamma_{{\rm proj},2j+1}^{{\sss (k)}}(t_{2j})$ and $\gamma_{{\rm proj},2j+1}^{{\sss (k)}}(t_{2j+2})$ satisfies the condition of Proposition \ref{prop:line_gamma} and thereby it has a distance less than $C{h^{{\sss (k)}}}^2$ to $\gamma^\ast$, where $C$ is independent on $h^{{\sss (k)}}$.
\end{proof}


\begin{proposition}\label{bound:convex_com}
  If $d(a, \gamma^\ast) = O(h^2)$, $d(b, \gamma^\ast) = O(h^2)$ and $\|a-b\| = O(h)$, then $d(c,\gamma^\ast) = O(h^2)$ for any $c \in s(a, b)$.
\end{proposition}
\begin{proof}
  Letting $a^*$ and $b^*$ be the projections of $a$ and $b$ onto $\gamma^\ast$ respectively, then $\|a^*-b^*\| \leq \|a^*-a\|+\|a-b\|+\|b-b^*\| = O(h)$. Letting $T_{a^*}\gamma^\ast$ be the tangent space of $\gamma^\ast$ at $a^*$,
  \[
  d(b, T_{a^*}\gamma^\ast) \leq \|b-b^*\| + d(b^*, T_{a^*}\gamma^\ast) = O(h^2)
  \]
  by Theorem 4.18 in \citet{Federer1959}.

  Based on the above inequalities, we start to bound $d(c, T_{a^*}\gamma^\ast)$ for $c=\beta a + (1-\beta)b$. Denoting $u$ be the normalized tangent vector of $\gamma^\ast$ at $a^*$, then
  \begin{align*}
    d(c,T_{a^*}\gamma^\ast)
    & = \|(I-uu^T)(c-a^*)\| = \|(I-uu^T)\big(\beta a + (1-\beta)b - a^* \big) \| \\
    & \leq  \beta \|(I-uu^T)(a - a^*)\| + (1-\beta)\|(I-uu^T)(b-a^*)\| \\
    & \leq \beta d(a, \gamma^\ast) + (1-\beta)d(b,T_{a^*}\gamma^\ast) = O(h^2).
  \end{align*}
  Let the projection of $c$ onto $T_{a^*}\gamma^\ast$ be $c^*$. We could bound $\|a^*-c^*\|$ as
  \[
    \|a^*-c^*\| \leq \|a^*-a\|+\|a-c\|+\|c-c^*\| = O(h),
  \]
  and thereby we could bound $d(c^*, \gamma^\ast) = O(h^2)$ by Proposition \ref{prop:dist_tangent_gamma}. Hence, we have
  \begin{align*}
  d(c, \gamma^\ast) \leq \|c-c^*\| + d(c^*, \gamma^\ast) \leq d(c, T_{a^*}\gamma^\ast)+d(c^*,\gamma^\ast) = O(h^2).
  \end{align*}
\end{proof}


\begin{proof}[Proof of Theorem 3.2]
For any $j = 1, \cdots, N^{{\sss (k)}}$, we will show that as the two projections of $\tilde{\gamma}^{{\sss (k)}}(t_{2j})$ to $e_1(z_{2j-1}^{{\sss (k)}},h^{{\sss (k)}})$ and $e_1(z_{2j+1}^{{\sss (k)}},h^{{\sss (k)}})$ respectively, $\gamma_{{\rm proj},2j-1}^{{\sss (k)}}(t_{2j})$ and $\gamma_{{\rm proj},2j+1}^{{\sss (k)}}(t_{2j})$  are not far away from each other:  
\begin{align*}
    \|\gamma_{{\rm proj},2j-1}^{{\sss (k)}}(t_{2j})-\gamma_{{\rm proj},2j+1}^{{\sss (k)}}(t_{2j}) \|
     & \leq \| \tilde{\gamma}^{{\sss (k)}}(t_{2j})-\gamma_{{\rm proj},2j-1}^{{\sss (k)}}(t_{2j}) \| + \|\tilde{\gamma}^{{\sss (k)}}(t_{2j})-\gamma_{{\rm proj},2j+1}^{{\sss (k)}}(t_{2j})\| \\
     & \leq \| \tilde{\gamma}^{{\sss (k)}}(t_{2j}) - z_{2j-1}^{{\sss (k)}} \| + \|\tilde{\gamma}^{{\sss (k)}}(t_{2j}) - z_{2j+1}^{{\sss (k)}}\| = O(h^{{\sss (k)}}),
  \end{align*}
where the last inequality holds by (\ref{bound:tildegamma_z1}) and (\ref{bound:tildegamma_z2}). According to Lemma \ref{lma:iteration_k}, the two segments $s\big( \gamma_{{\rm proj},2j-1}^{{\sss (k)}}(t_{2j-2}),\gamma_{{\rm proj},2j-1}^{{\sss (k)}}(t_{2j}) \big)$ and $s\big( \gamma_{{\rm proj},2j+1}^{{\sss (k)}}(t_{2j}),\gamma_{{\rm proj},2j+1}^{{\sss (k)}}(t_{2j+2}) \big)$ are within Hausdorff distance $C{h^{\sss (k)}}^2$ to $\gamma^\ast$ with probability $1-\delta$. And thereby the two points $\gamma_{{\rm proj},2j-1}^{{\sss (k)}}(t_{2j})$ and $\gamma_{{\rm proj},2j+1}^{{\sss (k)}}(t_{2j})$ are within Hausdorff distance $C{h^{\sss (k)}}^2$ to $\gamma^\ast$. Plugging $a = \gamma_{{\rm proj},2j-1}^{{\sss (k)}}(t_{2j})$ and $b=\gamma_{{\rm proj},2j+1}^{{\sss (k)}}(t_{2j})$ into Proposition \ref{bound:convex_com}, we could prove the Hausdorff distance between $s(\gamma_{{\rm proj},2j-1}^{{\sss (k)}}(t_{2j}), \gamma_{{\rm proj},2j+1}^{{\sss (k)}}(t_{2j})$ and $\gamma^\ast$ is less than $O({h^{{\sss (k)}}}^2)$ for any $j=1,\cdots,N^{{\sss (k)}} $. Similarly, we could also prove $s(\bar{x}_1, \gamma_{{\rm proj},1}^{{\sss (k)}}(t_{0})$ and $s(\gamma_{{\rm proj},2N^{{\sss (k)}}-1}^{{\sss (k)}}(t_{2N^{{\sss (k)}}}), \bar{x}_2)$ are within the same Hausdorff distance.
 As the union of all the segments, $d_H(\mathcal{S}, \gamma^\ast) = O({h^{{\sss (k)}}}^2)$. 
\end{proof}

\begin{proposition}\label{prop:init}
Given the initial discrete curve $\{\tilde{\gamma}^{{\sss (0)}}(t_i)\}_{i=0}^{2N^{{\sss (0)}}}$, if there exists constants $h^{{\sss (0)}}$, $C_1$ and $C_2$ satisfies
\begin{itemize}
\item[(a)] $Ch^{{\sss (0)}} \leq C_1 \rho$ and $(4C_1+7)h^{{\sss (0)}} < \|\gamma^\ast(0) - \gamma^\ast(r^*)\| $,
\item[(b)] $d(\tilde{\gamma}^{{\sss (0)}}(t_{i}),\gamma^\ast) \leq C_1 h^{{\sss (0)}}$, for any $i = 1, \cdots, 2N^{{\sss (0)}}-1 $
    \item[(c)] $\|\tilde{\gamma}^{{\sss (0)}}(t_{i+1}) - \tilde{\gamma}^{{\sss (0)}}(t_i)\| \leq C_2 h^{{\sss (0)}}$ for any $i<2N^{{\sss (0)}}$, 
    \item[(d)] $\|\tilde{\gamma}^{{\sss (0)}}(t_{2j+1})-\gamma^\ast(0)\|\geq (2C_1+3.5)h^{{\sss (0)}}$ and $\|\tilde{\gamma}^{{\sss (0)}}(t_{2j+1})-\gamma^\ast(r^*)\|\geq (2C_1+3.5)h^{{\sss (0)}}$ for any $j = 0, 1, \cdots, N^{{\sss (0)}}-1$.
\item[(e)] $C_2 > 4C_1+7$
\end{itemize}
then the three conditions of Lemma \ref{lma:iteration_k} hold with probability $(1-\delta)^{k}$ for any $k \geq 1$.
\end{proposition}
\begin{proof}
Since $\rho \leq 1$, $h^{{\sss (k)}} = \rho^{k}h^{\sss (0)} < h^{\sss (0)}$, the conditions $(4C_1+7)h^{{\sss (k)}} < \|\gamma^\ast(0) - \gamma^\ast(r^*)\| $ and $Ch^{{\sss (k)}} \leq C_1 \rho$ hold for any $k$. In order to prove the three conditions hold for $k \geq 1$ if they hold for $k=0$, we only need to prove the three conditions hold for $k+1$ if they hold for $k$.
If the conditions hold for $k$, then Lemma \ref{lma:iteration_k} holds. We obtain the segments $\{s\big( \gamma_{{\rm proj},2j+1}^{{\sss (k)}}(t_{2j}),\gamma_{{\rm proj},2j+1}^{{\sss (k)}}(t_{2j+2}) \big)\}$ for any $j = 0, 1, \cdots, N^{{\sss (k)}}$ are within Hausdorff distance $O({h^{{\sss (k)}}}^2)$ to $\gamma^\ast$ with probability $1-\delta$. Hence, by Proposition \ref{bound:convex_com}, we obtain
  \[
  d_H(\tilde{\gamma}^{\sss (k+1)}, \gamma^\ast) \leq C({h^{{\sss (k)}}}^2) \leq C_1 \rho h^{{\sss (k)}} = C_1 h^{\sss (k+1)},   \  \mbox{ with \ probability} \ 1-\delta
  \]
  which is the first condition of  Lemma \ref{lma:iteration_k} with probability $1-\delta$.
  
 $\tilde{\gamma}^{\sss (k+1)}$ is a continuous polyline starting at $\bar{x}_1$ and ending at $\bar{x}_2$. Removing the points in $B_d(\gamma^\ast(0), (2C_1+3.5)h^{\sss (k+1)})$ and $B_d(\gamma^\ast(r^*), (2C_1+3.5)h^{\sss (k+1)})$ from $\tilde{\gamma}^{\sss (k+1)}$, we obtain
 \begin{align}\label{def:tilde_S_0}
\tilde{\gamma}^{\sss (k+1)}_0 = \tilde{\gamma}^{\sss (k+1)} \setminus \big( B_d(\gamma^\ast(0), (2C_1+3.5)h^{\sss (k+1)}) \cup B_d(\gamma^\ast(r^*), (2C_1+3.5)h^{\sss (k+1)}) \big)
\end{align}
Since $\|\gamma^\ast(0) - \gamma^\ast(r^*)\|> (4C_1+7)h^{\sss (k+1)}$, 
$\tilde{\gamma}^{\sss (k+1)}_0$ is non-empty. Hence, we could select a discrete curve $\tilde{\gamma}^{\sss (k+1)}(t_i)$, $i = 1, \cdots, 2N^{\sss (k+1)}-1$, from $\tilde{\gamma}_0^{{\sss (k+1)}}$, such that nearby points are within distance $C_2h^{\sss (k+1)}$, that is, $\|\tilde{\gamma}^{\sss (k+1)}(t_{i+1}) - \tilde{\gamma}^{\sss (k+1)}(t_i)\| \leq C_2h^{\sss (k+1)}$ for any $i<2N^{\sss (k+1)}$, the second condition of Lemma \ref{lma:iteration_k} at the $(k+1)$-th iteration. Moreover, since $\{\tilde{\gamma}^{\sss (k+1)}(t_i) \} \subset \tilde{\gamma}_0^{\sss (k+1)}$, we have $\|\tilde{\gamma}^{\sss (k+1)}(t_i) - \gamma^\ast(t)\| \geq (2C_1+3.5)h^{\sss (k+1)}$ for $t=1,2$, which is the third condition of Lemma \ref{lma:iteration_k}.
\end{proof}

\begin{proof}[Proof of Theorem 3.3]
By Proposition \ref{prop:init}, the three conditions of Lemma \ref{lma:iteration_k} hold with probability $(1-\delta)^k$ for any $k \geq 1$. When the conditions hold, we have 
\[
d_H \Big( \tilde{\gamma}^{\sss (k+1)} , \gamma^\ast \Big) = O({h^{{\sss (k)}}}^2), \ \mbox{with \ probability} \ 1-\delta
\]
by Theorem 3.2. Hence, 
\[
 P \Big( d_H(\tilde{\gamma}^{\sss (k+1)} , \gamma^\ast ) = O({h^{{\sss (k)}}}^2) \Big) = (1-\delta)^k(1-\delta) = (1-\delta)^{k+1}.
 \]
\end{proof}

\begin{proof}[Proof of Theorem 3.4]
Since $\gamma^\ast \subset \M$, we have $d(x,\M)\leq d(x,\gamma^\ast)$. Using this inequality, we could obtain the following inequalities:
  \[
    d(\tilde{x}, \gamma^\ast) \leq d(\tilde{x},x)+d(x,\gamma^\ast) \leq d(x,\M)+d(x,\gamma^\ast) \leq d(x,\gamma^\ast) + d(x,\gamma^\ast) = O(h).
  \]
\end{proof}

\section*{Appendix D: Data set of Labelled Faces in the Wild in Section 5.2}

In our study, we downloaded $264$ images of $66$ people with four images of each person. The images of the face region for the $66$ individuals are shown in Figure \ref{Fig:face_data}.

\begin{figure}[H]
\begin{center}
\begin{minipage}[t]{0.8\textwidth}
\includegraphics[width=0.4in]{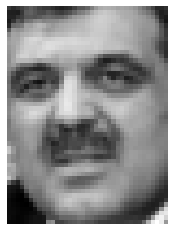}
\includegraphics[width=0.4in]{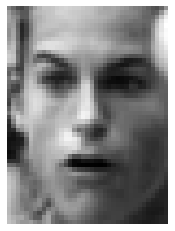}
\includegraphics[width=0.4in]{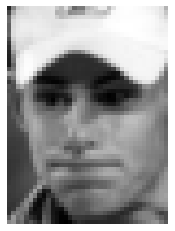}
\includegraphics[width=0.4in]{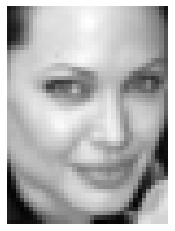}
\includegraphics[width=0.4in]{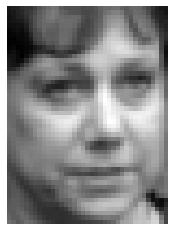}
\includegraphics[width=0.4in]{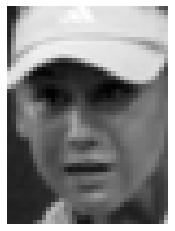}
\includegraphics[width=0.4in]{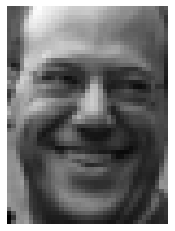}
\includegraphics[width=0.4in]{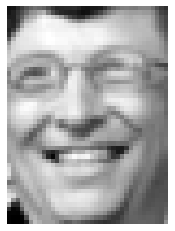}
\includegraphics[width=0.4in]{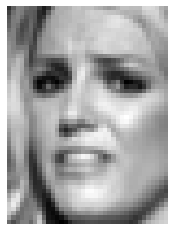}
\includegraphics[width=0.4in]{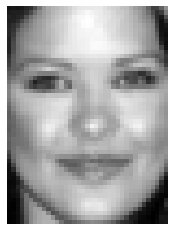}
\includegraphics[width=0.4in]{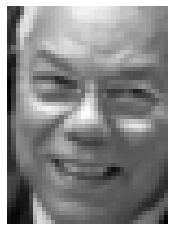}
\end{minipage}
\begin{minipage}[t]{0.8\textwidth}
\includegraphics[width=0.4in]{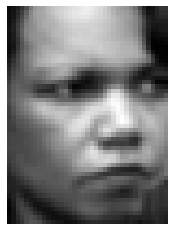}
\includegraphics[width=0.4in]{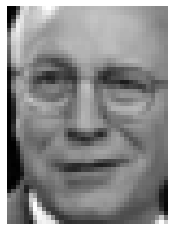}
\includegraphics[width=0.4in]{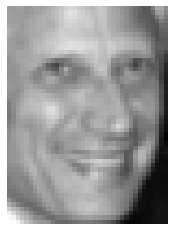}
\includegraphics[width=0.4in]{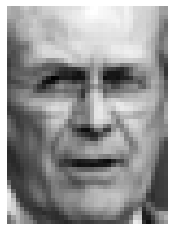}
\includegraphics[width=0.4in]{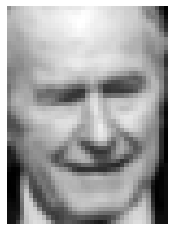}
\includegraphics[width=0.4in]{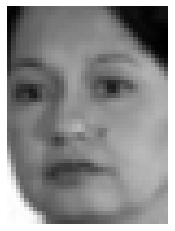}
\includegraphics[width=0.4in]{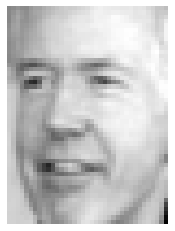}
\includegraphics[width=0.4in]{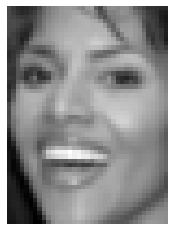}
\includegraphics[width=0.4in]{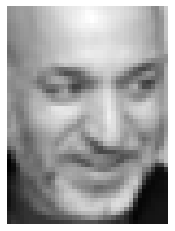}
\includegraphics[width=0.4in]{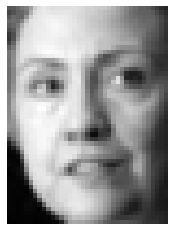}
\includegraphics[width=0.4in]{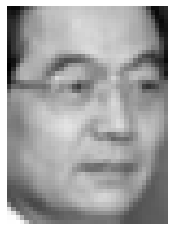}
\end{minipage}
\begin{minipage}[t]{0.8\textwidth}
\includegraphics[width=0.4in]{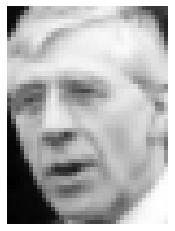}
\includegraphics[width=0.4in]{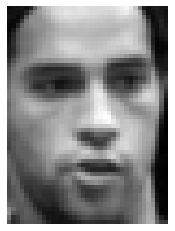}
\includegraphics[width=0.4in]{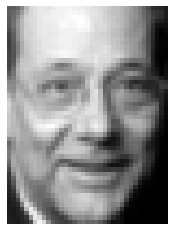}
\includegraphics[width=0.4in]{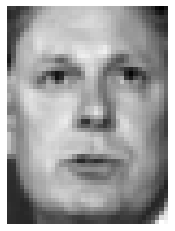}
\includegraphics[width=0.4in]{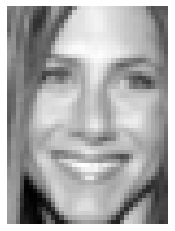}
\includegraphics[width=0.4in]{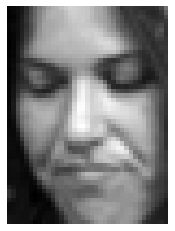}
\includegraphics[width=0.4in]{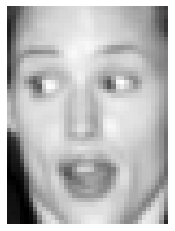}
\includegraphics[width=0.4in]{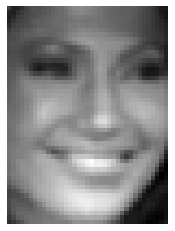}
\includegraphics[width=0.4in]{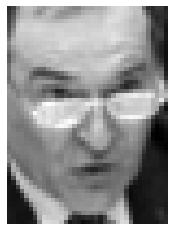}
\includegraphics[width=0.4in]{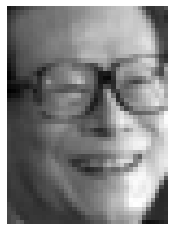}
\includegraphics[width=0.4in]{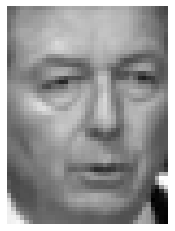}
\end{minipage}
\begin{minipage}[t]{0.8\textwidth}
\includegraphics[width=0.4in]{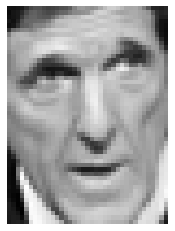}
\includegraphics[width=0.4in]{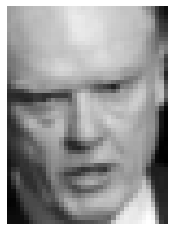}
\includegraphics[width=0.4in]{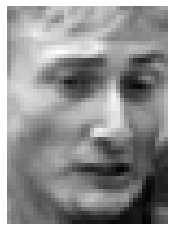}
\includegraphics[width=0.4in]{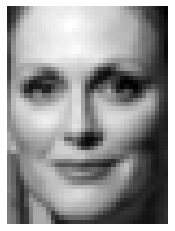}
\includegraphics[width=0.4in]{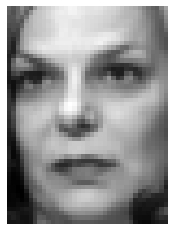}
\includegraphics[width=0.4in]{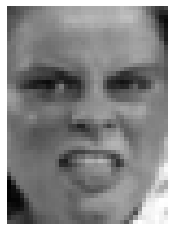}
\includegraphics[width=0.4in]{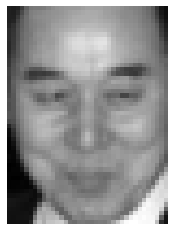}
\includegraphics[width=0.4in]{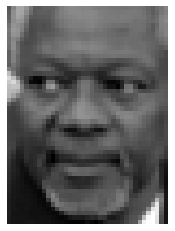}
\includegraphics[width=0.4in]{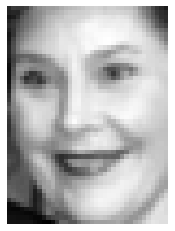}
\includegraphics[width=0.4in]{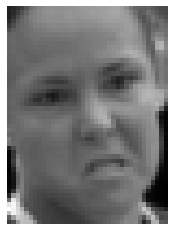}
\includegraphics[width=0.4in]{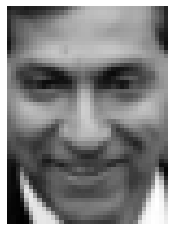}
\end{minipage}
\begin{minipage}[t]{0.8\textwidth}
\includegraphics[width=0.4in]{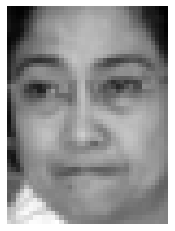}
\includegraphics[width=0.4in]{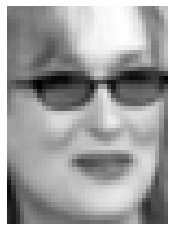}
\includegraphics[width=0.4in]{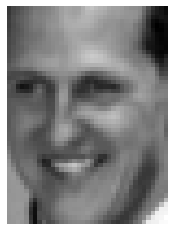}
\includegraphics[width=0.4in]{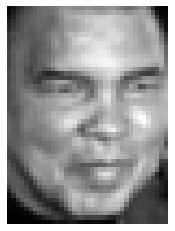}
\includegraphics[width=0.4in]{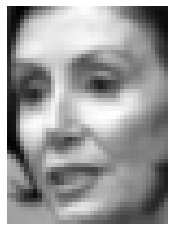}
\includegraphics[width=0.4in]{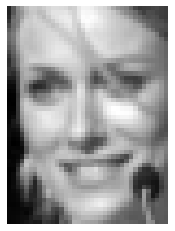}
\includegraphics[width=0.4in]{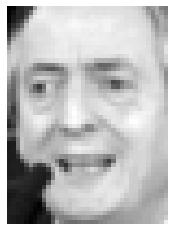}
\includegraphics[width=0.4in]{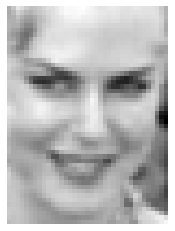}
\includegraphics[width=0.4in]{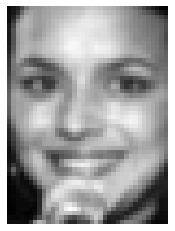}
\includegraphics[width=0.4in]{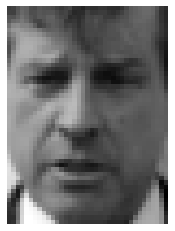}
\includegraphics[width=0.4in]{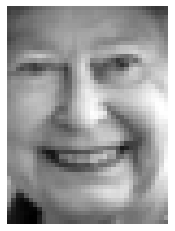}
\end{minipage}
\begin{minipage}[t]{0.8\textwidth}
\includegraphics[width=0.4in]{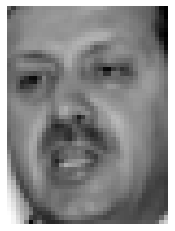}
\includegraphics[width=0.4in]{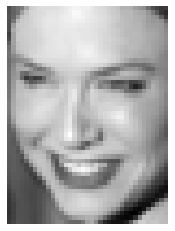}
\includegraphics[width=0.4in]{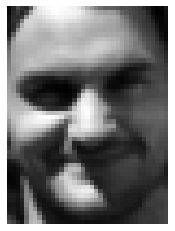}
\includegraphics[width=0.4in]{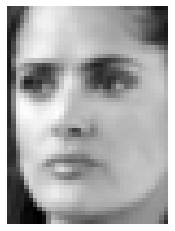}
\includegraphics[width=0.4in]{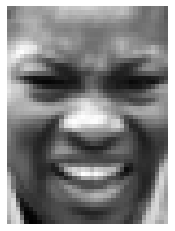}
\includegraphics[width=0.4in]{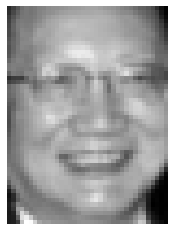}
\includegraphics[width=0.4in]{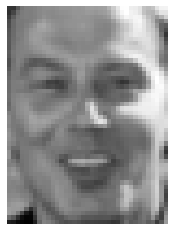}
\includegraphics[width=0.4in]{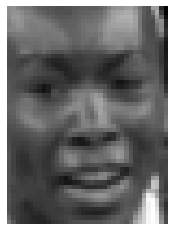}
\includegraphics[width=0.4in]{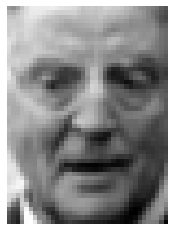}
\includegraphics[width=0.4in]{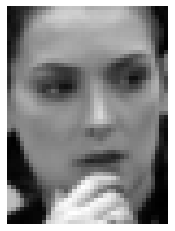}
\includegraphics[width=0.4in]{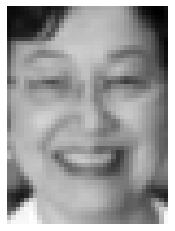}
\end{minipage}
\end{center}
\caption{Images of $66$ individuals’ faces.}\label{Fig:face_data}
\end{figure}

\section*{Appendix E: Proof of Proposition 6.2}

\begin{proposition}\label{PROP:GAMMA_S}
If $\ell(\bar{\gamma}_1)+\ell(\bar{\gamma})+\ell(\bar{\gamma}_2) \leq C$, then $\gamma_s$ belongs to the closure $\Gamma_+(\bar{x}_1,\bar{x}_2)$.
\end{proposition}
\begin{proof}
For simplicity, we denote $\bar{S}$ to be the closure of a set $S$. Let $P$ be the set of polynomials and $C$ be the set of continuous functions. Based on the Stone-Weierstrass Theorem, we have $P \subset C^2 \subset \bar{P} = C$, which implies that the closure of $C^2$ is $C$.
Based on this conclusion, we have
\begin{eqnarray}
\bar{\Gamma}(\bar{x}_1,\bar{x}_2)&=\{&\gamma: [0,r]\to \mathcal{M}: \gamma\in C([0,r]), r\in(0,\RVSD{C\Delta}], \nonumber\\
&& \gamma(0)=\bar{x}_1, \gamma(r)=\bar{x}_2, \gamma(s)\neq \gamma(s^\prime)~\text{for}~s\neq s^\prime, \nonumber \\
&& \ell(\gamma[0,t])=t, \text{for~all}~ t\in[0,r]\}, \nonumber
\end{eqnarray}
and
\begin{align*}
    \bar{\Gamma}_+(\bar{x}_1,\bar{x}_2) = \{\gamma \in \bar{\Gamma}(\bar{x}_1,\bar{x}_2): \dot{\gamma}(t) \odot  W(\gamma(t)) \geq 0 \ {\rm for \ any} \ t \}.
\end{align*}
Since $\gamma_s$ is continuous and satisfies $\dot{\gamma}(t) \odot  W(\gamma(t)) \geq 0$, we conclude that $\gamma_s \in \bar{\Gamma}_+(\bar{x}_1,\bar{x}_2)$.
\end{proof}

\begin{proof}[Proof of Proposition 6.2]
For any $\gamma \in \Gamma_+(\bar{x}_1,\bar{x}_2)$, there exists $t_0$ such that $v_1^T(\gamma(t_0)) = 0$, since $v_1^T(\gamma(0)) = v_1^T\bar{x}_1 < 0$ and $v_1^T(\gamma(r)) = v_1^T\bar{x}_2 > 0$. Define $\gamma_1:[0,t_0] \to \mathcal{M}$ by $\gamma_1(t) = \gamma(t)+ v_1v_1^T(\gamma(0)-\gamma(t))$ and $\gamma_2:[0,r-t_0] \to \mathcal{M}$ by $\gamma_2(t) = \gamma(t-t_0) + v_1v_1^T(\gamma(r)-\gamma(t-t_0))$.
It is easy to verify that $V_\bot^T \gamma(t) = V_\bot^T \gamma_1(t)$,  $V_\bot^T \dot{\gamma}(t) = V_\bot^T \dot{\gamma_1}(t)$ and thereby
\[
0 \leq \langle V_\bot^T\dot{\gamma}(t), V_\bot^TW(\gamma(t)) \rangle \leq \langle V_\bot^T\dot{\gamma}(t), V_\bot^TW(\gamma_1(t)) \rangle = \langle V_\bot^T\dot{\gamma_1}(t), V_\bot^TW(\gamma_1(t)) \rangle
\]
by Assumption \RVSD{6.1} (b) and (c).

Considering that $v_1^T\gamma_1(t)$ and $v_1^T\gamma_2(t)$ are constant, we have $v_1^T\dot{\gamma_1}(t) = v_1^T\dot{\gamma_2}(t)$ and thereby
\begin{align*}
         \int_{0}^{t_0} \langle V_\bot^T\dot{\gamma}(t), V_\bot^TW(\gamma(t)) \rangle dt
    & \leq \int_{0}^{t_0} \langle V_\bot^T\dot{\gamma_1}(t), V_\bot^TW(\gamma_1(t)) \rangle dt
    = \int_{0}^{t_0} \langle \dot{\gamma_1}(t), W(\gamma_1(t)) \rangle dt \\
    & = \mathcal{L}(W,\gamma_1) \leq \mathcal{L}(W,\gamma_1) + \mathcal{L}(W,\gamma_0),
\end{align*}
where $\gamma_0 = \arg\sup_{\gamma \in \Gamma_+(\gamma_1(t_0),p_1,v_1^Tp_1)} \mathcal{L}(W,\gamma)$. Since we can always reparameterize $\gamma_1$ to be a unit speed one, say $\tilde{\gamma}_1$, the concatenate of $\tilde{\gamma}_1$ and $\gamma_0$ belongs to $\bar{\Gamma}_+(\bar{x}_1, p_1, v_1^Tp_1)$. Hence,
\[
\int_{0}^{t_0} \langle V_\bot^T\dot{\gamma}(t), V_\bot^TW(\gamma(t)) \rangle dt
\leq \mathcal{L}(W,\gamma_1) + \mathcal{L}(W,\gamma_0)
\leq \mathcal{L}(W,\bar{\gamma}_1)
\]
We can similarly verify that
\begin{align*}
         \int_{t_0}^{r} \langle V_\bot^T\dot{\gamma}(t), V_\bot^TW(\gamma(t)) \rangle dt
    \leq \mathcal{L}(W,\bar{\gamma}_2).
\end{align*}
Moreover, by $\|W(\gamma(t))\|=1$, we have $ v_1^TW(\gamma(t)) \leq 1$ and thereby
\begin{align}\label{ineq:int_1}
\int_{0}^r v_1^T\dot{\gamma}(t) \cdot v_1^TW(\gamma(t)) dt \leq \int_{0}^r v_1^T \dot{\gamma}(t) dt \leq \mathcal{L}(W,\bar{\gamma}).
\end{align}
Hence,
\begin{align*}
      \int_{0}^r \langle \dot{\gamma}(t), W(\gamma(t)) \rangle dt
    = & \int_{0}^r \big( v_1^T\dot{\gamma}(t) \cdot v_1^TW(\gamma(t)) + \langle V_\bot^T\dot{\gamma}(t), V_\bot^TW(\gamma(t)) \rangle \big) dt \\
    \leq & \mathcal{L}(W,\bar{\gamma}) + \mathcal{L}(W,\bar{\gamma}_1)
    + \mathcal{L}(W,\bar{\gamma}_2) = \mathcal{L}(W,\gamma_s).
\end{align*}
Since $\gamma_s \in \bar{\Gamma}_+(\bar{x}_1, \bar{x}_2)$, the supremum can be achieved, which completes the proof.
\end{proof}

\begin{figure}[th]
    \centering
    \includegraphics[width=0.3\textwidth]{Plots/gamma.jpg}
    \includegraphics[width=0.29\textwidth]{Plots/gamma_arrow.jpg}
    \caption{Diagram of $\gamma$ (the blue curve) and $\gamma_+$ (the red curve) in $\M$. The orange and pink curves are segments of $\gamma$ and the yellow and purple curves are segments of $\gamma_+$.}
    \label{Fig:gamma_plus}
\end{figure}

Next, we will discuss the inequality 
\begin{align}\label{Gamma_neg}
\sup_{\gamma \in \Gamma_+(\bar{x}_1, \bar{x}_2)} \ \mathcal{L}(W, \gamma) \quad \geq \quad \sup_{\gamma \in \Gamma(\bar{x}_1, \bar{x}_2)/\Gamma_+(\bar{x}_1, \bar{x}_2)} \ \mathcal{L}(W, \gamma).
\end{align}
Actually, if $\gamma \in \Gamma(\bar{x}_1, \bar{x}_2)/\Gamma_+(\bar{x}_1, \bar{x}_2)$ satisfies $\langle v_1^T\dot{\gamma}(t), v_1^TW(\gamma(t)) \rangle \geq 0 $,
then we define $\gamma_+$ by $\{v_i^T \gamma_+(t)\}_{i=1}^m$. We specially set $v_1^T\gamma_+(t) = v_1^T\gamma(t)$, and for $i \geq 2$ we set
\begin{align} \label{eq:gamma_plus}
    v_i^T \gamma_+ (t) =
    \begin{cases}
    \min\{\max_{s \leq t} \gamma(t), 0\} & v_i^T\bar{x}_1 \leq 0, t \leq t_0 \\
    \min\{\max_{s \geq t} \gamma(t), 0\} & v_i^T\bar{x}_1 \leq 0, t \geq t_0 \\
    \max\{\min_{s \leq t} \gamma(t), 0\} & v_i^T\bar{x}_1 \geq 0, t \leq t_0 \\
    \max\{\min_{s \geq t} \gamma(t), 0\} & v_i^T\bar{x}_2 \geq 0, t \geq t_0
    \end{cases}
\end{align}
where $t_0$ is defined in the proof of Proposition 6.2. Using Assumption \RVSD{6.1} (b), we can verify that $\dot{\gamma}_+(t) \odot W(\gamma_+(t)) \geq 0$.

In Figure \ref{Fig:gamma_plus}, we display the cross sectional area of $\M$ along the first and $i$-th axis for $i \geq 2$. In the left panel, the blue curve is $\gamma$ and the red curve is $\gamma_+$. Without loss of generality, we focus on $v_i^T \bar{x}_1 \leq 0$ and $t \leq t_0$. The other three cases in (\ref{eq:gamma_plus}) can be similarly verified.

First, we compare the integrals over the orange curve $\mathcal{C}_1$ and the yellow curve $\mathcal{C}_2$ in Figure \ref{Fig:gamma_plus}. Then the integral on $\mathcal{C}_1$ denoted by $I_1$ is
\[
I_1 = \int_{t_1}^{t_2} v_i^T \dot{\gamma}(t) \cdot v_i^TW(\gamma(t)) dt  = \int_{t_1}^{t_2} v_i^TW(\gamma(t)) v_i^Td \gamma(t) = \int_{\mathcal{C}_1} v_i^TW(z) dz_i,
\]
and $I_2 = \int_{\mathcal{C}_2} v_i^TW(z) dz_i$. Then, $I_1 - I_2$ is the integral of $v_i^TW(z)$ over the closed anticlockwise curve consisting of  $\mathcal{C}_1$ and the inverse of $\mathcal{C}_2$. When $d=2$, such integral is equal to an integral over the gray region denoted by $\mathcal{D}$ shown in the right panel of Figure \ref{Fig:gamma_plus} by Green Theorem, that is,
\[
I_1 - I_2 = \iint_\mathcal{D} \frac{\partial v_2^T W(z)}{\partial z_1} dz_1dz_2 \leq 0,
\]
since $ \frac{\partial v_2^T W(z)}{\partial z_1} \leq 0$ for $z_1 \leq 0$ based on Assumption \RVSD{6.1}(c). For $i \geq 2$, if $\frac{\partial v_i^TW(z)}{\partial z_j} \geq 0$ holds for any $j>i$ and $\frac{\partial v_i^TW(z)}{\partial z_j}<0$ holds for any $j<i$, the conclusion can be extended to a higher dimension by the Stokes' theorem.

Second, we compare the integrals over the purple and pink curve in Figure \ref{Fig:gamma_plus}. By Assumption \RVSD{6.1} (b), the integral of $v_i^TW$ over the purple curve is negative, while the integral over the pink curve is zero. So, the integral of $v_i^TW$ over the purple curve is less than the pink curve. The above discussion summarizes
$
    \int_{0}^{t_0} v_i^T\dot{\gamma}(t) \cdot v_i^T W(\gamma(t)) dt  \leq \int_{0}^{t_0} v_i^T\dot{\gamma}_+(t) \cdot v_i^T W(\gamma_+(t)) dt,
$
for any $i \geq 2$. Moreover,
\begin{align*}
    \int_{0}^{t_0} \langle V_\bot^T\dot{\gamma}(t) , V_\bot^T W(\gamma_+(t))\rangle
    & = \sum_{i=2}^m \int_{0}^{t_0} v_i^T\dot{\gamma}(t) \cdot v_i^T W(\gamma(t)) dt
    \leq \sum_{i=2}^m \int_{0}^{t_0} v_i^T\dot{\gamma}_+(t) \cdot v_i^T W(\gamma_+(t)) dt \\
    & = \int_{0}^{t_0} \langle V_\bot^T\dot{\gamma}_+(t) , V_\bot^T W(\gamma_+(t))\rangle dt \leq \mathcal{L}(W,\bar{\gamma}_1),
\end{align*}
where the last inequality can be verified by similar proof of Proposition 6.2. Implementing the above discussion for $t \geq t_0$ analogically, we also have
\begin{align*}
    \int_{t_0}^r \langle V_\bot^T\dot{\gamma}(t) , V_\bot^T W(\gamma(t))\rangle dt \leq \mathcal{L}(W,\bar{\gamma}_2).
\end{align*}
Along with (\ref{ineq:int_1}) we conclude
\begin{align*}
\mathcal{L}(W,\gamma) & = \int_{0}^r v_1^T\dot{\gamma}(t) \cdot v_1^T W(\gamma(t)) dt + \sum_{i=2}^m \int_{0}^r v_i^T\dot{\gamma}(t) \cdot v_i^T W(\gamma(t)) dt \\
& \leq \mathcal{L}(W,\bar{\gamma})+\mathcal{L}(W,\bar{\gamma})+\mathcal{L}(W,\bar{\gamma}) = \sup_{\gamma \in \Gamma_+(\bar{x}_1, \bar{x}_2)}  \ \mathcal{L}(W, \gamma),
\end{align*}
which supports the inequality (\ref{Gamma_neg}).




\spacingset{1.5}
\bibliographystyle{Chicago}

\bibliography{FBFreferences}

\begin{thebibliography}{}

\bibitem[\protect\citeauthoryear{Arjovsky, Chintala, and Bottou}{Arjovsky
  et~al.}{2017}]{arjovsky2017wasserstein}
Arjovsky, M., S.~Chintala, and L.~Bottou (2017).
\newblock Wasserstein generative adversarial networks.
\newblock In {\em Proceedings of the 34th International Conference on Machine
  Learning (ICML)}, pp.\  214--223.

\bibitem[\protect\citeauthoryear{Bradley, Nowrouzezahrai, and
  Beardsley}{Bradley et~al.}{2013}]{Bradley2013}
Bradley, D., D.~Nowrouzezahrai, and P.~Beardsley (2013, july).
\newblock Image-based reconstruction and synthesis of dense foliage.
\newblock {\em ACM Transactions on Graphics (TOG)\/}~{\em 32\/}(4),
  74:1--74:10.

\bibitem[\protect\citeauthoryear{Cox and Cox}{Cox and
  Cox}{2000}]{cox2000multidimensional}
Cox, T.~F. and M.~A. Cox (2000).
\newblock {\em Multidimensional scaling}.
\newblock Chapman and hall/CRC.

\bibitem[\protect\citeauthoryear{Dryden and Mardia}{Dryden and
  Mardia}{2016}]{dryden2016statistical}
Dryden, I.~L. and K.~V. Mardia (2016).
\newblock {\em Statistical shape analysis: with applications in R}, Volume 995.
\newblock John Wiley \& Sons.

\bibitem[\protect\citeauthoryear{Eltzner, Huckemann, Hotz, and Mardia}{Eltzner
  et~al.}{2018}]{Eltzner2017}
Eltzner, B., S.~Huckemann, T.~Hotz, and K.~Mardia (2018).
\newblock Torus principal component analysis with applications to rna
  structure.
\newblock {\em Annals of Applied Statistics\/}~{\em 12}, 1332--1359.

\bibitem[\protect\citeauthoryear{Federer}{Federer}{1959}]{Federer1959}
Federer, H. (1959).
\newblock Curvature measures.
\newblock {\em Transactions of the American Mathematical Society\/}~{\em
  93\/}(3), 418--491.

\bibitem[\protect\citeauthoryear{Fefferman, Ivanov, Kurylev, Lassas, and
  Narayanan}{Fefferman et~al.}{2018}]{pmlr-v75-fefferman18a}
Fefferman, C., S.~Ivanov, Y.~Kurylev, M.~Lassas, and H.~Narayanan (2018).
\newblock Fitting a putative manifold to noisy data.
\newblock In {\em Proceedings of the 31st Conference On Learning Theory},
  Volume~75, pp.\  688--720. PMLR.

\bibitem[\protect\citeauthoryear{Fefferman, Mitter, and Narayanan}{Fefferman
  et~al.}{2016}]{fefferman2016testing}
Fefferman, C., S.~Mitter, and H.~Narayanan (2016).
\newblock Testing the manifold hypothesis.
\newblock {\em Journal of the American Mathematical Society\/}~{\em 29\/}(4),
  983--1049.

\bibitem[\protect\citeauthoryear{Fletcher and Joshi}{Fletcher and
  Joshi}{2007}]{Fletcher2007}
Fletcher, P.~T. and S.~Joshi (2007).
\newblock Riemannian geometry for the statistical analysis of diffusion tensor
  data.
\newblock {\em Signal Processing\/}~{\em 87}, 250--262.

\bibitem[\protect\citeauthoryear{Fletcher, Lu, Pizer, and Joshi}{Fletcher
  et~al.}{2004}]{fletcher2004principal}
Fletcher, P.~T., C.~Lu, S.~M. Pizer, and S.~Joshi (2004).
\newblock Principal geodesic analysis for the study of nonlinear statistics of
  shape.
\newblock {\em IEEE Transactions on Medical Imaging\/}~{\em 23\/}(8),
  995--1005.

\bibitem[\protect\citeauthoryear{Gross}{Gross}{2005}]{Gross2005}
Gross, R. (2005, February).
\newblock Face databases.
\newblock In {\em Handbook of Face Recognition}, pp.\  301--327. New York:
  Springer.

\bibitem[\protect\citeauthoryear{Hastie and Stuetzle}{Hastie and
  Stuetzle}{1989}]{hastie1989principal}
Hastie, T. and W.~Stuetzle (1989).
\newblock Principal curves.
\newblock {\em Journal of the American Statistical Association\/}~{\em
  84\/}(406), 502--516.

\bibitem[\protect\citeauthoryear{Huang, Ramesh, Berg, and Learned-Miller}{Huang
  et~al.}{2007}]{LFWTech}
Huang, G.~B., M.~Ramesh, T.~Berg, and E.~Learned-Miller (2007, October).
\newblock Labeled faces in the wild: A database for studying face recognition
  in unconstrained environments.
\newblock ~(07-49).

\bibitem[\protect\citeauthoryear{Huckemann and Ziezold}{Huckemann and
  Ziezold}{2006}]{Huckemann2006}
Huckemann, S. and H.~Ziezold (2006).
\newblock Principal component analysis for riemannian manifolds, with an
  application to triangular shape spaces.
\newblock {\em Advances in Applied Probability\/}~{\em 38}, 299--319.

\bibitem[\protect\citeauthoryear{Jung, Dryden, and Marron}{Jung
  et~al.}{2012}]{Jung2012}
Jung, S., I.~L. Dryden, and J.~S. Marron (2012).
\newblock Analysis of principal nested spheres.
\newblock {\em Biometrika\/}~{\em 99}, 551--568.

\bibitem[\protect\citeauthoryear{Jupp and Kent}{Jupp and Kent}{1987}]{Jupp1987}
Jupp, P.~E. and J.~T. Kent (1987).
\newblock Fitting smooth paths to spherical data.
\newblock {\em Journal of the Royal Statistical Soceity, Series C\/}~{\em 36},
  34--36.

\bibitem[\protect\citeauthoryear{Kenobi, Dryden, and Le}{Kenobi
  et~al.}{2010}]{Kenobi2010}
Kenobi, K., I.~L. Dryden, and H.~Le (2010).
\newblock ``shape curves and geodesic modelling.
\newblock {\em Biometrika\/}~{\em 97}, 567--584.

\bibitem[\protect\citeauthoryear{Kilian, Mitra, and Pottmann}{Kilian
  et~al.}{2007}]{kilian2007geometric}
Kilian, M., N.~J. Mitra, and H.~Pottmann (2007).
\newblock Geometric modeling in shape space.
\newblock {\em ACM Transactions on Graphics (TOG)\/}~{\em 26\/}(3), 64:1--64:8.

\bibitem[\protect\citeauthoryear{Kume, Dryden, and Le}{Kume
  et~al.}{2007}]{Kumi2007}
Kume, A., I.~L. Dryden, and H.~Le (2007).
\newblock Shape-space smoothing splines for planar landmark data.
\newblock {\em Biometrika\/}~{\em 94}, 513--528.

\bibitem[\protect\citeauthoryear{Ladicky and Torr}{Ladicky and
  Torr}{2011}]{ladicky2011locally}
Ladicky, L. and P.~Torr (2011).
\newblock Locally linear support vector machines.
\newblock In {\em Proceedings of the 28th International Conference on Machine
  Learning (ICML)}, pp.\  985--992.

\bibitem[\protect\citeauthoryear{Liu, Yao, Leung, and Chan}{Liu
  et~al.}{2017}]{liu2017level}
Liu, H., Z.~Yao, S.~Leung, and T.~F. Chan (2017).
\newblock A level set based variational principal flow method for nonparametric
  dimension reduction on riemannian manifolds.
\newblock {\em SIAM Journal on Scientific Computing\/}~{\em 39\/}(4),
  A1616--A1646.

\bibitem[\protect\citeauthoryear{Ma and Fu}{Ma and Fu}{2011}]{ma2011manifold}
Ma, Y. and Y.~Fu (2011).
\newblock {\em Manifold learning theory and applications}.
\newblock CRC press.

\bibitem[\protect\citeauthoryear{Ozertem and Erdogmus}{Ozertem and
  Erdogmus}{2011}]{ozertem2011locally}
Ozertem, U. and D.~Erdogmus (2011).
\newblock Locally defined principal curves and surfaces.
\newblock {\em Journal of Machine learning research\/}~{\em 12\/}(Apr),
  1249--1286.

\bibitem[\protect\citeauthoryear{Panaretos, Pham, and Yao}{Panaretos
  et~al.}{2014}]{panaretos2014principal}
Panaretos, V.~M., T.~Pham, and Z.~Yao (2014).
\newblock Principal flows.
\newblock {\em Journal of the American Statistical Association\/}~{\em
  109\/}(505), 424--436.

\bibitem[\protect\citeauthoryear{Phillips, Moon, Rauss, and Rizvi}{Phillips
  et~al.}{1997}]{phillips1997feret}
Phillips, P.~J., H.~Moon, P.~Rauss, and S.~A. Rizvi (1997).
\newblock The feret evaluation methodology for face-recognition algorithms.
\newblock In {\em Proceedings of IEEE Computer Society Conference on Computer
  Vision and Pattern Recognition}, pp.\  137--143. IEEE.

\bibitem[\protect\citeauthoryear{Roweis and Saul}{Roweis and
  Saul}{2000}]{roweis2000nonlinear}
Roweis, S.~T. and L.~K. Saul (2000).
\newblock Nonlinear dimensionality reduction by locally linear embedding.
\newblock {\em Science\/}~{\em 290\/}(5500), 2323--2326.

\bibitem[\protect\citeauthoryear{Tenenbaum, De~Silva, and Langford}{Tenenbaum
  et~al.}{2000}]{tenenbaum2000global}
Tenenbaum, J.~B., V.~De~Silva, and J.~C. Langford (2000).
\newblock A global geometric framework for nonlinear dimensionality reduction.
\newblock {\em Science\/}~{\em 290\/}(5500), 2319--2323.

\bibitem[\protect\citeauthoryear{Turk and Levoy}{Turk and
  Levoy}{1994}]{turk1994zippered}
Turk, G. and M.~Levoy (1994).
\newblock Zippered polygon meshes from range images.
\newblock In {\em Proceedings of the 21st Annual Conference on Computer
  Graphics and Interactive Techniques}, pp.\  311--318. ACM.

\bibitem[\protect\citeauthoryear{Tyagi, Vural, and Frossard}{Tyagi
  et~al.}{2013}]{tyagi2013tangent}
Tyagi, H., E.~Vural, and P.~Frossard (2013).
\newblock Tangent space estimation for smooth embeddings of riemannian
  manifolds.
\newblock {\em Information and Inference: A Journal of the IMA\/}~{\em 2\/}(1),
  69--114.

\bibitem[\protect\citeauthoryear{Yao and Xia}{Yao and Xia}{2019}]{yaoxia2019}
Yao, Z. and Y.~Xia (2019).
\newblock Manifold fitting under unbounded noise.
\newblock {\em arXiv preprint arXiv:1909.10228\/}.

\bibitem[\protect\citeauthoryear{{Yao} and {Zhang}}{{Yao} and
  {Zhang}}{2020}]{Yao2019principal}
{Yao}, Z. and Z.~{Zhang} (2020).
\newblock Principal boundary on riemannian manifolds.
\newblock {\em Journal of the American Statistical Association\/}~{\em 115},
  1435--1448.

\bibitem[\protect\citeauthoryear{Zhang and Zha}{Zhang and
  Zha}{2004}]{zhang2004principal}
Zhang, Z. and H.~Zha (2004).
\newblock Principal manifolds and nonlinear dimensionality reduction via
  tangent space alignment.
\newblock {\em SIAM Journal on Scientific Computing\/}~{\em 26\/}(1), 313--338.

\end{thebibliography}
\end{document}